\def\scaleint#1{\vcenter{\hbox{\scaleto[3ex]{\displaystyle\int}{#1}}}}
\newcommand\tenq[2][1]{%
\def\useanchorwidth{T}%
\ifnum#1>1%
\stackunder[0pt]{\tenq[\numexpr#1-1\relax]{#2}}{\!\scriptscriptstyle\thicksim}%
\else%
\stackunder[1pt]{#2}{\!\scriptstyle\thicksim}%
\fi%
}
\DeclareRobustCommand\widecheck[1]{{\mathpalette\@widecheck{#1}}}
\def\@widecheck#1#2{%
    \setbox\z@\hbox{\m@th$#1#2$}%
    \setbox\tw@\hbox{\m@th$#1%
       \widehat{%
          \vrule\@width\z@\@height\ht\z@
          \vrule\@height\z@\@width\wd\z@}$}%
    \dp\tw@-\ht\z@
    \@tempdima\ht\z@ \advance\@tempdima2\ht\tw@ \divide\@tempdima\thr@@
    \setbox\tw@\hbox{%
       \raise\@tempdima\hbox{\scalebox{1}[-1]{\lower\@tempdima\box
\tw@}}}%
    {\ooalign{\box\tw@ \cr \box\z@}}}
\DeclareMathOperator*{\las}{\overset{a.s.}{\rightarrow}}
\def\given{\,|\,}
\def\tr{\mathop{\text{tr}}\kern.2ex}
\def\P{{\mathrm P}}
\def\E{{\mathrm E}}
\def\R{{\mathbb R}}
\def\d{{\mathrm d}}
\newcommand{\pa}[1]{\left(#1\right)}
\newcommand{\bp}[1]{\left[#1\right]}
\newcommand{\ap}[1]{\left|#1\right|}
\newcommand{\cp}[1]{\left\{#1\right\}}
\newcommand{\al}[1]{\begin{align*}#1\end{align*}}
\newcommand{\pr}[1]{\mathrm{P}\pa{#1}}
\newcommand{\var}[1]{\text{Var}\pa{#1}}
\newcommand{\ep}[1]{\E\bp{#1}}
\newcommand{\cdl}{\middle |}
\newcommand{\nm}[1]{\|#1\|}
\newcommand{\bmat}[1]{\begin{bmatrix}#1\end{bmatrix}}
\newcolumntype{L}[1]{>{\raggedright\let\newline\\\arraybackslash\hspace{0pt}}m{#1}}
\newcolumntype{C}[1]{>{  \centering\let\newline\\\arraybackslash\hspace{0pt}}m{#1}}
\newcolumntype{R}[1]{>{ \raggedleft\let\newline\\\arraybackslash\hspace{0pt}}m{#1}}
\newcolumntype{d}[1]{D{.}{.}{#1}}
\newcolumntype{H}{>{\setbox0=\hbox\bgroup}c<{\egroup}@{}}
\newcolumntype{Z}{>{\setbox0=\hbox\bgroup}c<{\egroup}@{\hspace*{-\tabcolsep}}}
\newcolumntype{b}{X}
\newcolumntype{s}{>{\hsize=.5\hsize}X}
\numberwithin{equation}{section}
\newtheorem{theorem}{Theorem}[section]
\newtheorem{lemma}{Lemma}[section]
\newtheorem{corollary}{Corollary}[section]
\providecommand{\customgenericname}{}
\newcommand{\newcustomtheorem}[2]{%
  \newenvironment{#1}[1]
  {%
   \renewcommand\customgenericname{#2}%
   \renewcommand\theinnercustomgeneric{##1}%
   \innercustomgeneric
  }
  {\endinnercustomgeneric}
}
\theoremstyle{definition}
\newtheorem{remark}{Remark}[section]
\newcommand{\mylabel}[2]{#2\def\@currentlabel{#2}\label{#1}}
\let\check\widecheck
\begin{document}

\title{\LARGE On a rank-based Azadkia-Chatterjee correlation coefficient}

\author{Leon Tran\thanks{Department of Statistics, University of Washington, Seattle, WA 98195, USA; e-mail: {\tt leontk@uw.edu}} ~~~and~~~Fang Han\thanks{Department of Statistics, University of Washington, Seattle, WA 98195, USA; e-mail: {\tt fanghan@uw.edu}} \thanks{We thank Harry Joe for raising the question that inspired us to explore the rank-based alternative to the original Azadkia-Chatterjee correlation coefficient.}
}

\date{\today}

\maketitle

\vspace{-1em}

\begin{abstract}
Azadkia and Chatterjee \citep{azadkia2019simple} recently introduced a graph-based correlation coefficient that has garnered significant attention. The method relies on a nearest neighbor graph (NNG) constructed from the data. While appealing in many respects, NNGs typically lack the desirable property of scale invariance; that is, changing the scales of certain covariates can alter the structure of the graph. This paper addresses this limitation by employing a rank-based NNG proposed by \citet{rosenbaum2005exact} and gives necessary theoretical guarantees for the corresponding  rank-based Azadkia-Chatterjee correlation coefficient. 
\end{abstract}

{\bf Keywords:} measure of dependence, nearest neighbor graph, rank transformation.

\section{Introduction}

Measuring the strength of dependence between two groups of random variables, along with the associated challenges of estimation and inference, has been a central focus in statistics since its inception. Given the long and rich history of this topic, it is remarkable that Sourav Chatterjee \citep{chatterjee2021new} recently made significant progress in this area. In the bivariate case, Chatterjee introduced a novel, rank-based approach for estimating a dependence measure originally proposed by Dette, Siburg, and Stoimenov \citep{dette2013copula}. This measure satisfies R\'enyi’s criteria \citep{renyi1959measures}, taking a value of 0 if and only if the two random variables are independent, and 1 if and only if they are functionally dependent \citep{bickel2022measures}. Notably, it can also be computed in nearly linear time and has a simple normal limit that can be consistently estimated \citep{chatterjee2021new,lin2022limit,dette2024simple}.

This breakthrough has inspired a wave of research aimed at understanding the stochastic behaviors of Chatterjee’s rank correlation as well as extending it to more complex scenarios \citep{azadkia2019simple,cao2020correlations,shi2020power,gamboa2020global,deb2020kernel,huang2020kernel,auddy2021exact,lin2021boosting,fuchs2021bivariate,azadkia2021fast,griessenberger2022multivariate,zhang2022asymptotic,bickel2022measures,lin2022limit,han2022azadkia,ansari2022simple,zhang2023relationships,shi2024azadkia,strothmann2024rearranged,dette2024simple,bucher2024lack,lin2024failure,kroll2024asymptotic}. Among the most influential contributions is the work of Mona Azadkia and Sourav Chatterjee \citep{azadkia2019simple}, which extends Chatterjee’s original proposal to the multivariate case through the innovative use of a nearest neighbor graph (NNG). The resulting correlation coefficient is therefore a graph-based statistic.

However, a notable limitation of NNGs is that they generally lack scale invariance; that is, changes in the scale of some covariates can significantly alter the structure of the graph. As a result, the Azadkia-Chatterjee graph-based correlation coefficient, unlike Chatterjee’s original rank-based one \citep{chatterjee2021new}, is sensitive to scale transformations, making it challenging to interpret in practice.

To address this issue, we propose a rank-based approach to constructing NNGs. Specifically, instead of relying on Euclidean distances between the original data points, we measure proximity based on the Euclidean distance between their {\it coordinate-wise ranks}. This idea, initially introduced by \citet{rosenbaum2005exact}, has been successfully applied in a parallel line of research in causal inference \citep{rosenbaum2010design,cattaneo2023rosenbaum} and will naturally lead to a scale-invariant statistic.

Our theoretical analysis demonstrates that Azadkia-Chatterjee correlation coefficient constructed based on rank-based NNGs, which we call the rank-based Azadkia-Chatterjee correlation coefficient, provides a statistically consistent estimator of the same Dette-Siburg-Stoimenov dependence measure as the original approach \citep{azadkia2019simple}. Additionally, we derive the limiting distribution of the proposed coefficient under independence. Interestingly, when both variables are univariate, the asymptotic variance of our correlation coefficient differs from that obtained in \citet[Theorem 3.1]{shi2024azadkia}.

\section{Set up and methods}

Consider $\mX\in\R^d$ to be a $d$-dimensional random vector, $Y\in\R$ to be a random variable, and assume $\mX$ and $Y$ to be defined on the same probability space with cumulative distribution functions (CDFs) $\mF$ and $F$, respectively. This paper is interested in measuring the dependence strength between $\mX$ and $Y$. To this end, we focus on the following population quantity that was introduced by Dette, Siburg, and Stoimenov \citep{dette2013copula}:
\[
\xi=\xi(\mX,Y):=  \frac{\scaleint{4.5ex}\,\Var\big\{\E\big[\ind\big(Y\geq y\big) \given \mX \big] \big\} \d F(y)}{\scaleint{4.5ex}\,\Var\big\{\ind\big(Y\geq y\big)\big\}\d F(y)}.
\]
As shown in \citet[Proposition 1]{dette2013copula}---see, also, \citet[Theorem 1.1]{chatterjee2021new} and \citet[Theorem 2.1]{azadkia2019simple}---this measure of dependence admits the appealing properties of being 0 if and only if $Y$ is independent of $\mX$, and being 1 if and only if $Y$ is a measurable function of $\mX$. It thus answers a long-standing call of R\'enyi \citep{renyi1959measures}; see, also, \cite{schweizer1981nonparametric} and more recent reviews of this history made by Peter Bickel \citep{bickel2022measures} and Sourav Chatterjee \citep{chatterjee2022survey}.

Let $[n]:=\{1,2,\ldots,n\}$ and $\{(\mX_i, Y_i)\}_{i\in[n]}$ be $n$ independent copies of $(\mX,Y)$ with $\mX_i=(X_{i,1},\ldots,X_{i,d})^\top$. For estimating $\xi$ based only on the data points $(\mX_i, Y_i)$'s, \cite{azadkia2019simple} proposed to leverage the NNG constructed by $\mX_i$'s. More specifically, let $\tilde N(i)$ index the nearest neighbor (NN) of $\mX_i$ under the Euclidean norm $\|\cdot\|$,  let
\[
F_n(y):=\frac1n\sum_{i=1}^n\ind(Y_i\leq y)
\]
be the empirical CDF of $\{Y_i\}_{1\leq i\leq n}$, and let
\[
G_n(y):=\frac1n\sum_{i=1}^n\ind(Y_i\geq y) 
\]
be the empirical survival function. They propose to estimate $\xi$ using
\[
\xi_n^{\rm AC}:= \frac{ \sum_{i=1}^n \Big\{F_n(Y_i) \wedge F_n(Y_{\tilde N(i)}) -G_n(Y_i)^2\Big\}}{\sum_{i=1}^n G_n(Y_i)(1 - G_n(Y_i)) },
\]
where ``$\wedge$'' denotes the minimum of the two.

Unfortunately, the NNG, identified by $\{\tilde N(i);i\in[n]\}$, is not scale-invariant unless $d=1$. To remedy this, we employ the following rank-based approach that was initiated in \cite{rosenbaum2005exact}. In detail, for arbitrary $\mx=(x_1,\ldots,x_d)^\top\in\R^d$, introduce
\[
\mF_n(\mx) = (F_{n,1}(\mx), \ldots, F_{n,d}(\mx))^\top,~~{\rm with}~~F_{n,j}(\mx):=\frac1n\sum_{k=1}^n\ind(X_{k,j}\leq x_j)
\] 
being the marginal empirical CDF of $X_{1,j},\ldots,X_{n,j}$. Clearly, the equality  $\mF=\ep{\mF_n}$ holds, with $\mF=(F_1,\ldots,F_d)^\top$. Now introduce $N(i)$ as the index of the NN of $\mF_n(\mX_i)$ among $\big\{\mF_n(\mX_j); j\in[n], j\ne i\big\}$, with ties broken uniformly according to independent draws $U_i \sim {\rm Uniform}[0,1]$---the uniform distribution over $[0,1]$---that are also independent of the data. The proposed rank-based Azadkia-Chatterjee correlation coefficient is then 
\begin{align}\label{eq:xin}
\xi_n:= \frac{ \sum_{i=1}^n \Big\{F_n(Y_i) \wedge F_n(Y_{N(i)}) -G_n(Y_i)^2\Big\}}{\sum_{i=1}^n G_n(Y_i)(1 - G_n(Y_i)) }.
\end{align}
Note that the {\it only} difference between $\xi_n$ and $\xi_n^{\rm AC}$ is the replacement of the $\tilde N(i)$'s in $\xi_n^{\rm AC}$ by the rank-based NNG $N(i)$'s; the latter is naturally immune to scale alternations.

\section{Theoretical guarantees}

To justify using the proposed rank-based Azadkia-Chatterjee correlation coefficient as a safe alternative to the original $\xi_n^{\rm AC}$, we first prove that $\xi_n$ is also a consistent estimator of $\xi$ under the same condition as $\xi_n^{\rm AC}$.

\begin{theorem}\label{consistency}
As long as $Y$ is not a measurable function of $\mX$ almost surely, we have 
\[
\xi_n \text{ converges to } \xi \text{ in probability.}
\]
\end{theorem}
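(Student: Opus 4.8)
The plan is to treat the numerator and denominator of $\xi_n$ separately and to reduce the graph-dependent part to the nearest-neighbor averaging argument of \citet{azadkia2019simple}. The denominator $\frac1n\sum_i G_n(Y_i)(1-G_n(Y_i))$ involves only the $Y_i$'s and no graph, so after the Glivenko--Cantelli replacement of $G_n$ by the survival function $G$ it is a $V$-statistic in the $Y$-sample and converges in probability to $\int \Var\{\ind(Y\ge y)\}\,\d F(y)$. The hypothesis that $Y$ is not a measurable function of $\mX$ rules out $Y$ being almost surely constant, hence makes this limit strictly positive; it is moreover equivalent to $\xi<1$ by \citet[Proposition 1]{dette2013copula}. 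So once I control the numerator, the conclusion follows from the continuous mapping theorem. In the numerator $\frac1n\sum_i\{F_n(Y_i)\wedge F_n(Y_{N(i)})-G_n(Y_i)^2\}$ the piece $\frac1n\sum_i G_n(Y_i)^2$ is again a graph-free $V$-statistic converging to $\int G(y)^2\,\d F(y)$, so the only genuinely new quantity is $S_n:=\frac1n\sum_i F_n(Y_i)\wedge F_n(Y_{N(i)})$.

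First I would rewrite $S_n$ for conditioning. Since $F_n$ is monotone, $F_n(Y_i)\wedge F_n(Y_{N(i)})=F_n(Y_i\wedge Y_{N(i)})$, and replacing $F_n$ by $F$ costs at most $\|F_n-F\|_\infty=o_P(1)$; thus it suffices to study $\frac1n\sum_i h(Y_i,Y_{N(i)})$ with the fixed bounded kernel $h(y,y')=F(y\wedge y')$. The target limit is $\E[h(Y,Y')]$, where, conditionally on $\mX$, the variables $Y$ and $Y'$ are independent draws from the law of $Y$ given $\mX$; integrating $h$ against this product law gives $\E\big[\int G_{\mX}(t)^2\,\d F(t)\big]$ with $G_{\mX}(t)=\pr{Y\ge t\given\mX}=\E[\ind(Y\ge t)\mid\mX]$, so subtracting the $\int G^2\,\d F$ piece recovers exactly the numerator $\int \Var\{\E[\ind(Y\ge y)\mid\mX]\}\,\d F(y)$ of $\xi$. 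The conditional-independence structure is what makes this precise: given $(\mX_1,\dots,\mX_n)$ and the tie-breaking variables, the graph $N(\cdot)$ is \emph{frozen} (it depends only on the covariates and the $U_i$'s, never on the responses), so the $Y_i$ are independent with $Y_i\sim P_{Y\mid\mX_i}$ and $Y_{N(i)}$ is independent of $Y_i$ with law $P_{Y\mid\mX_{N(i)}}$.

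The core is to show $S_n\to\E[h(Y,Y')]$ in probability, and this is where the rank construction must be reconciled with \citet{azadkia2019simple}. The key observation is a \emph{uniform} perturbation bound between the empirical rank map $\mF_n(\mX_i)$ and the population probability-integral transform $\mF(\mX_i)$, namely $\max_i\nm{\mF_n(\mX_i)-\mF(\mX_i)}\le\sqrt d\,\sup_j\|F_{n,j}-F_j\|_\infty=:\sqrt d\,\epsilon_n$ with $\epsilon_n\to0$ almost surely. Because the perturbation is uniform, two applications of the triangle inequality give $\nm{\mF(\mX_{N(i)})-\mF(\mX_i)}\le\min_{j\ne i}\nm{\mF(\mX_j)-\mF(\mX_i)}+4\sqrt d\,\epsilon_n$, so the images under $\mF$ of $i$ and of its empirical-rank nearest neighbor are as close as the genuine population-rank nearest-neighbor distance, up to a vanishing additive error: localization transfers from the population graph to the empirical one even though the two graphs need not coincide. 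Under continuity of the marginals of $\mX$, the map $\mF$ is coordinatewise strictly increasing on the support, so $\xi$ is invariant under it and the conditional law of $Y$ given $\mF(\mX)$ coincides with that given $\mX$; hence localization in $\mF(\mX)$-space is exactly what the conditional-law arguments require, and the limit may be read back in terms of $\mX$. Two structural facts then carry over: the graph has in-degree bounded by a dimensional constant $c_d$ (a purely geometric property of Euclidean nearest-neighbor graphs in $\R^d$, valid for any point configuration and any consistent tie-breaking rule), which bounds the conditional variance of $S_n$ by $O(c_d/n)$; and the averaged conditional continuity $\frac1n\sum_i\E\,\TV\big(P_{Y\mid\mX_{N(i)}},P_{Y\mid\mX_i}\big)\to0$, which governs the conditional mean. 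Combining the two yields $S_n\to\E[h(Y,Y')]$, and assembling the three pieces gives $\xi_n\to\xi$.

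I expect the last displayed convergence to be the main obstacle: it is the averaged continuity of the conditional law along the edges of the \emph{data-dependent, discretely supported} rank graph. In \citet{azadkia2019simple} the corresponding step is carried out for a fixed Euclidean metric on the raw covariates, whereas here the edges are chosen in the empirical-rank space, which is random through its dependence on all of $\mX_1,\dots,\mX_n$ and lives on the lattice $\{1/n,\dots,1\}^d$, so exact distance ties are generic and the $U_i$ tie-breaking is genuinely needed. The uniform perturbation bound reduces the randomness of the metric to localization in the fixed space $\mF(\mX)$, and the bounded-in-degree fact neutralizes the discreteness for the variance; what remains is to verify that localization together with the frozen-graph conditional-independence structure still pushes through the total-variation continuity argument for $P_{Y\mid\mX}$, now with respect to $\mF(\mX)$. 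It is precisely here that the hypothesis ``$Y$ is not a measurable function of $\mX$'' enters: it places us in the regime $\xi<1$ where $P_{Y\mid\mX}$ is non-degenerate, so that the averaged continuity holds, whereas a degenerate point-mass conditional law, corresponding to $Y=f(\mX)$ with $f$ discontinuous, would make the relevant total-variation distances fail to vanish and must be excluded.
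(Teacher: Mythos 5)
Your high-level skeleton overlaps substantially with the paper's own proof: the numerator/denominator split, the uniform DKW perturbation bound that transfers localization from the empirical-rank graph to population-rank space (this is exactly how the paper proves Lemma \ref{cons_xni_las_xi}), and the observation that the graph is frozen given the covariates and tie-breaking variables (used in Lemma \ref{cons_indgivenxni}). But the step you yourself flag as the core---the averaged total-variation continuity $\frac1n\sum_i \E\,\TV\big(P_{Y\mid\mX_{N(i)}},P_{Y\mid\mX_i}\big)\to 0$---is not provable: it is false under the theorem's hypothesis, and your claim that nondegeneracy of $P_{Y\mid\mX}$ rescues it is incorrect. Counterexample: $d=1$, $\mX\sim\mathrm{Uniform}[0,1]$, and $P_{Y\mid\mX=x}=\tfrac12\delta_x+\tfrac12\,\mathrm{Uniform}[0,1]$. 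Here $Y$ is certainly not a measurable function of $\mX$, yet for $x\neq x'$ one has $\TV\big(P_{Y\mid\mX=x},P_{Y\mid\mX=x'}\big)\geq\tfrac12$, because the two point masses sit at distinct locations while the uniform parts cancel; since $\pr{\mX_{N(i)}\neq\mX_i}=1$, your average stays $\geq\tfrac12$ for every $n$, no matter how close $\mX_{N(i)}$ gets to $\mX_i$. Spatial localization can never force TV-closeness of conditional laws. What it does give---and what the paper uses instead, following Azadkia--Chatterjee---is the Lusin-theorem statement (Lemma \ref{cons_arbmeas}) that $g(\mX_{N(i)})-g(\mX_i)\overset{\P}{\rightarrow}0$ for each fixed bounded measurable $g$; this is applied not to the conditional law itself but to the scalar conditional functionals left after integrating out the $Y$'s (Lemmas \ref{cons_indgivenxni} and \ref{cons_plim_yni}), and the variance is killed by a decorrelation counting argument showing that only $O(n^3)$ of the $n^4$ conditional covariance terms are non-negligible (Lemmas \ref{cons_Ni_distinct} and \ref{cons_var0}). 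The hypothesis that $Y$ is not a measurable function of $\mX$ plays the same blanket nondegeneracy role as in Azadkia--Chatterjee (in particular it keeps the limiting denominator positive, as you correctly note in your first paragraph); it is not, and cannot be, a substitute for continuity of $x\mapsto P_{Y\mid\mX=x}$.

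Two further steps do not survive the generality of the theorem, which assumes nothing beyond nondegeneracy: ties and discrete $\mX$ are explicitly allowed. First, your deterministic in-degree bound $c_d$, claimed ``valid for any point configuration and any consistent tie-breaking rule,'' fails when points coincide: if $\mX$ has an atom, many $\mF_n(\mX_i)$ are exactly equal, and with uniform tie-breaking the in-degree of a single point is not bounded by any constant depending only on $d$, so your $O(c_d/n)$ conditional-variance bound breaks precisely in the tied case. (The paper's Lemma \ref{cons_Ni_distinct} handles atoms by a separate Borel--Cantelli argument rather than a degree bound.) Second, your identification of the conditional law of $Y$ given $\mF(\mX)$ with that given $\mX$ requires $\mF$ to be injective on the support, i.e., continuous strictly increasing marginals, which Theorem \ref{consistency} does not assume. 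Both of these are repairable with additional work, but the TV-continuity step above is not; that part of the argument must be replaced by the Lusin/decorrelation route the paper takes.
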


\begin{remark} 
\begin{enumerate}[itemsep=-.5ex, label=(\roman*)] 
\item Similar to $\xi_n^{\rm AC}$, the proposed $\xi_n$ in \eqref{eq:xin} can also be computed in $O(n \log n)$ time. This computational efficiency arises from the fact that  ranking each coordinate of $\{\mX_i; i \in [n]\}$ and $\{Y_i; i \in [n]\}$, as well as constructing an NNG of $\{\mF_n(\mX_i); i \in [n]\}$, can be accomplished in $O(n \log n)$ time. 
\item \citet[Theorem 2.2]{azadkia2019simple} demonstrated that $\xi_n^{\rm AC}$ converges to $\xi$ as long as $(Y, \mX)$ is nondegenerate—that is, $Y$ is not a measurable function of $\mX$. Theorem \ref{consistency} establishes that this also holds for $\xi_n$ under the same condition. However, we can only prove convergence in probability, not almost sure convergence as shown in \cite{azadkia2019simple}. This limitation arises due to the lack of technical tools for analyzing the tail behavior of $\xi_n$, which involves a more complex stochastic graph structure with dependent data points and ties. However, we believe that this lack of strong consistency is a technical artifact rather than an intrinsic limitation. 
\item As with $\xi_n^{\rm AC}$, the convergence of $\xi_n$ holds without requiring any additional assumptions beyond nondegeneracy. Notably, both $Y$ and $\mX$ are allowed to be discrete and include ties. 
\end{enumerate} 
\end{remark}

Our next theorem concerns the limiting distribution of $\xi_n$. For this, we follow the approach devised in \cite{shi2024azadkia} (see, also, \cite{deb2020kernel}) and focus on the limiting distribution of $\xi_n$ under independence between $Y$ and $\mX$, with additional continuity assumptions posed.

\begin{theorem}\label{an}
Assume $(\mX, Y)$ admits a Lebesgue density $f_{\mX,Y}$  and $\mF(\mX)$ admits a Lebesgue (copula) density $f$. Assume both are continuous on their supports. Assume further that $\mX$ is independent of $Y$ and $d\neq 2$. It then holds true that 
\[
\sqrt{n} \xi_n \text{ converges in distribution to } \mathcal{N}(0, \sigma_d^2), 
\]
where 
\[
\sigma_d^2 = \begin{cases}
1, & d = 1\\
\frac{2}{5} + \frac{2}{5}q_d + \frac{4}{5}o_d, & d \geq 2 \end{cases}.
\]
Here constants $q_d$ and $o_d$ are introduced in \citet[Theorem 3.1]{shi2024azadkia} as
\al{
q_d = \pa{2 - I_{3/4}\pa{\frac{d+1}{2}, \frac{1}{2}}}^{-1} ~~~{\rm and}~~~o_d = \int_{\cS_d} \exp\{- V_d(\mx_1, \mx_2)\} \, \d(\mx_1, \mx_2)
}
with
\al{
&I_{x}(a,b) = \frac{\int_0^x t^{a -1}(1-t)^{b-1} \, dt}{\int_0^1 t^{a -1}(1-t)^{b-1} \, dt}, ~~\cS_d = \cp{(\mx_1, \mx_2) \in \R^{2d} : \max\pa{\|\mx_1\|, \|\mx_2\|} < \|\mx_1 - \mx_2\|},\\
~~{\rm and}~~&V_d(\mx_1, \mx_2) = \lambda_d\Big\{\cB(\mx_1, \|\mx_1\|) \cup \cB(\mx_2, \| \mx_2\|)\Big\}.
}
Here $\cB(\mx,r)$ stands for the closed ball of radius $r$ and center $\mx$ in $(\R^d,\|\cdot\|)$ and $\lambda_d$ represents the Lebesgue measure in $\R^d$.
\end{theorem}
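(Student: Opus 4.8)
The plan is to write $\sqrt n\,\xi_n = (A_n/\sqrt n)\big/(D_n/n)$, where $A_n := \sum_{i=1}^n\{F_n(Y_i)\wedge F_n(Y_{N(i)}) - G_n(Y_i)^2\}$ is the numerator and $D_n := \sum_{i=1}^n G_n(Y_i)(1-G_n(Y_i))$ the denominator, and to treat the two factors separately. Since $(\mX,Y)$ has a density, $Y$ is continuous, so the multiset $\{G_n(Y_i):i\in[n]\}$ equals $\{1/n,2/n,\dots,1\}$ deterministically; hence $D_n/n = \frac1n\sum_{k=1}^n \frac kn(1-\frac kn)\to\int_0^1 u(1-u)\,\d u = \tfrac16$ with no randomness at all. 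By Slutsky it then suffices to prove $A_n/\sqrt n\rightsquigarrow \mathcal N(0,v_d^2)$ and to set $\sigma_d^2 = 36\,v_d^2$. Writing $R_i$ for the rank of $Y_i$, we have $F_n(Y_i)=R_i/n$ and $G_n(Y_i)=(n-R_i+1)/n$, and under independence the rank vector $(R_1,\dots,R_n)$ is a uniform random permutation of $[n]$ that is independent of the rank-based graph $\mathcal G:=\{(i,N(i)):i\in[n]\}$.

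The first—and I expect hardest—step is to replace the rank-based graph by an oracle graph built on the population copula. Set $N^{\star}(i)$ to be the nearest neighbor of $\mF(\mX_i)$ among $\{\mF(\mX_j):j\ne i\}$; the points $\mF(\mX_i)$ are i.i.d.\ with the continuous copula density $f$. Because $\|\mF_n(\mX_i)-\mF(\mX_i)\|=O_P(\sqrt{\log n/n})$ uniformly, whereas both the typical nearest-neighbor distance and the gap between the first and second nearest-neighbor distances are of order $n^{-1/d}$, a perturbation of the point positions can change $N(i)$ only when these two competing distances are within $O(\sqrt{\log n/n})$ of each other. A counting argument then bounds the expected number of discrepant indices by order $n^{1/2+1/d}$, which is $o(n)$ precisely when $d\ge 3$; this is exactly why the borderline case $d=2$ (where the count is of order at least $n$) must be excluded. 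Crucially, since both $N$ and $N^{\star}$ are functions of the $\mX_i$'s alone, conditioning on the $\mX_i$'s leaves the $Y$-ranks uniform, so every discrepant summand has conditional mean zero; combined with the bounded in- and out-degree of nearest-neighbor graphs this gives $\mathrm{Var}(A_n-A_n^{\star}\mid\mX)=O(\#\text{discrepancies})=o(n)$, whence $A_n = A_n^{\star}+o_P(\sqrt n)$ for $d\ge3$. The case $d=1$ is genuinely different and is handled on its own: there the rank map equalizes the spacings, the points $\mF_n(\mX_i)$ sit exactly on the lattice $\{1/n,\dots,1\}$, and $\mathcal G$ is the (random-tie-broken) nearest-neighbor graph of that lattice rather than a Poisson-type graph, which is precisely why $\sigma_1^2$ will not match the value in \citet{shi2024azadkia}.

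With a tractable graph in hand I would prove a central limit theorem for the centered numerator. Conditionally on $\mathcal G$, the summand $g_i := F_n(Y_i)\wedge F_n(Y_{N(i)}) - G_n(Y_i)^2$ depends only on $(R_i,R_{N(i)})$, and $g_i,g_{i'}$ are uncorrelated unless the index pairs $\{i,N(i)\}$ and $\{i',N(i')\}$ intersect; the bounded degree of nearest-neighbor graphs then makes the $g_i$'s a sum over a bounded-degree dependency graph, to which a Stein's-method normal-approximation bound applies once $\mathrm{Var}(A_n)\asymp n$ is verified. The conditional mean $\E[A_n\mid\mathcal G]$ is $O(1)$ (each term is $\tfrac13-\tfrac13$ up to an $O(1/n)$ sampling-without-replacement correction) and hence negligible at scale $\sqrt n$, so $v_d^2=\lim \E[\mathrm{Var}(A_n\mid\mathcal G)]/n$. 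I would compute this by the law of total variance, after a standard replacement of the permutation ranks $R_i/n$ by i.i.d.\ uniforms, grouping the covariances $\mathrm{Cov}(g_i,g_{i'}\mid\mathcal G)$ by the type of overlap: the diagonal $i=i'$ gives the leading constant, ordered pairs with a common out-neighbor $N(i)=N(i')$ give the shared-nearest-neighbor term governed by $o_d=\int_{\cS_d}\exp\{-V_d\}$ (the Poisson void probability that no point beats the common neighbor), and reciprocal pairs $N(N(i))=i$ give the mutual-nearest-neighbor term governed by $q_d$. That these normalized graph functionals converge in probability to the dimension-only constants $q_d,o_d$—with the copula density cancelling out—follows from a local Poissonization and stabilization argument, exactly as in \citet[Theorem 3.1]{shi2024azadkia}; assembling the three pieces yields $36\,v_d^2 = \tfrac25+\tfrac25 q_d+\tfrac45 o_d$ for $d\ge3$.

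Finally, for $d=1$ I would carry out the variance computation explicitly on the lattice graph. Indexing the summands by the $X$-rank $s$, the associated $Y$-ranks $W_s$ form a uniform random permutation, each interior vertex points to $s\pm1$ with probability $\tfrac12$ through its independent tie-break, and the only nonzero covariances come from neighboring summands sharing a single $W$. This is an elementary finite computation that yields $v_1^2=1/36$, hence $\sigma_1^2=1$. The main obstacle throughout is the graph-replacement step for $d\ge3$: quantifying the stability of the nearest-neighbor graph under an $O(\sqrt{\log n/n})$ perturbation of the points and showing that the resulting discrepancies contribute only $o_P(\sqrt n)$ to $A_n$. This is both the crux of the argument and the source of the dimension restriction $d\neq2$.
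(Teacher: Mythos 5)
Your overall architecture coincides with the paper's: the denominator is handled by noting it is asymptotically deterministic; the rank-based graph is compared with the oracle graph built from the $\mF(\mX_i)$, with the comparison controlled by the rate gap $n^{-1/2}\ll n^{-1/d}$ (this is the content of Lemmas \ref{an_kn_dn} and \ref{an_N_N_equiv}, and your counting bound of order $n^{1/2+1/d}$ is exactly why the paper's arguments need $0<\epsilon<\tfrac12-\tfrac1d$, i.e.\ $d>2$); the CLT is proved conditionally on the covariates by Stein's method for dependency graphs (Theorem \ref{an_cond}); the limiting variance is assembled from the mutual-NN and shared-NN frequencies converging to $q_d$ and $o_d$ (Corollary \ref{exp_avgnn}); and $d=1$ is treated by an exact lattice computation (Lemma \ref{an_calc_d1}), giving $\sigma_1^2=1$. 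So the route is essentially the paper's, and the features specific to the rank transformation (why $d=2$ is excluded, why $d=1$ differs from \citet{shi2024azadkia}) are correctly identified.

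There is, however, one step that fails as written: the conditional CLT. You keep the empirical ranks $F_n(Y_i)=R_i/n$ (so that the centering $\sum_i G_n(Y_i)^2$ is deterministic) and assert that, given $\mathcal{G}$, summands with non-intersecting index pairs are uncorrelated, so that the $g_i$ form a bounded-degree dependency graph amenable to Stein's method. Under the permutation law this is false twice over: disjoint pairs have covariances of order $1/n$, and since there are order $n^2$ of them they contribute at the same order $n$ as the variance itself and cannot be discarded; worse, Stein's dependency-graph bound requires joint \emph{independence} of non-adjacent blocks, which no nontrivial family of functions of a single random permutation satisfies. The repair is precisely the paper's first move in the normality section: replace $Q_n$ by $\tilde Q_n$, i.e.\ pass from ranks to the i.i.d.\ variables $F(Y_i)$ together with the \emph{matching} U-statistic centering $\frac{1}{n(n-1)}\sum_{i\ne j}F(Y_i)\wedge F(Y_j)$, via Lemma D.1 of \citet{deb2020kernel}. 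Note this is not a term-by-term $o_{\P}(n^{-1/2})$ substitution—the uncentered difference $\frac{1}{\sqrt n}\sum_i\{F_n(Y_i)\wedge F_n(Y_{N(i)})-F(Y_i)\wedge F(Y_{N(i)})\}$ is a nondegenerate $O_{\P}(1)$ quantity driven by the common empirical process, so the matched centerings are essential. After this step the centering is no longer deterministic, and its H\'ajek projection $2h(Y_i)$ must be absorbed into each summand $V_i$, as in Theorem \ref{an_cond}; only then are non-adjacent $V_i$ genuinely independent, and only then does the diagonal-plus-overlap bookkeeping reproduce $\frac25+\frac25 q_d+\frac45 o_d$. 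Your ``standard replacement of the permutation ranks by i.i.d.\ uniforms,'' which you defer to the variance computation, is exactly this step—but it must come \emph{before} the CLT, and it alters both the dependence structure and the constants.
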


\begin{remark} 
Theorem 3.1 in \cite{shi2024azadkia} shows that $\sqrt{n} \xi_n^{\rm AC}$ weakly converges to a normal distribution for any fixed $d \geq 1$ under nearly the same conditions as Theorem \ref{an} above. However, Theorem \ref{an} differs in two key aspects. 
\begin{enumerate}[itemsep=-.5ex, label=(\roman*)] 
\item 
First, the limiting variance of $\xi_n$ has an interestingly distinct value from that of $\xi_n^{\rm AC}$ when $d=1$. This discrepancy arises because the convergence rates of $\mF_n$ to $\mF$ and $X_{N(i)}$ to $X_i$ differ; only when $d=1$ does the latter converge faster than the former.  
\item 
Second, Theorem \ref{an} does not cover the intriguing case of $d=2$, where the convergence rates of both $\mF_n$ to $\mF$ and $X_{N(i)}$ to $X_i$ are root-$n$, creating a singularity that current techniques cannot address. Presently, we do not have a solution for handling $d=2$. However, simulation results in Section \ref{sec:sim}, specifically Table \ref{nullvariances} ahead, suggest that $\sqrt{n}\xi_n$ also weakly converges to $\cN(0, \sigma_2^2)$ like $\sqrt{n}\xi_n^{\rm AC}$. This is further supported by theory from a related setting \citep[Theorem 1]{cattaneo2023rosenbaum}, where cases $d=1$ and $d>1$ are treated separately, with $d=2$ showing no fundamental difference from $d \geq 3$. 
\end{enumerate}
\end{remark}

\section{Simulation}\label{sec:sim}

This section presents concept-focused, small-scale simulation results that illustrate the advantages of $\xi_n$ over $\xi_n^{\rm AC}$. To this end, we use a straightforward data generation process, starting with a multivariate normal pair $(\mZ, W)$ defined as
\[
\bmat{\mZ\\ W} \sim \mathcal{N}\pa{\bm{0}, \begin{bmatrix} I_{d-1} &  \bm{0}_{d-1} & \rho\bm{1}_{d-1}\\ \bm{0}_{d-1}^T & 1 & 0 \\ \rho \bm{1}_{d-1}^T & 0 & 1 \end{bmatrix}}.
\]
Here, both $\mZ$ and $W$ are standard normal, with {\it all but the last} coordinate of $\mZ$ correlated with $W$ at a level $\rho$ that varies across simulations.

The observed data are independently generated from the pair $(\mX, Y)$, where
\[
\mX = (Z_1, \ldots, Z_{d-1}, \alpha Z_d)~~~{\rm and}~~~ Y = W.
\]
In this setup, derived from the structure of $(\mZ, W)$, only the last coordinate of $\mX$ is independent of $Y$. Increasing $\alpha$ amplifies the influence of $\mX$'s last coordinate in constructing the NNG, while leaving the structure of the rank-based NNG unchanged.

We vary the dimension $d \in \{2,3,5\}$, correlation $\rho \in \{0, 0.5, 0.9\}$, scaling factor $\alpha \in \{1,10,500\}$, and sample size $n \in \{30, 50, 100\}$. Tables \ref{table_rho0}-\ref{table_rho0.9} report the empirical means, root mean squared error (RMSE), and rejection frequencies (RF) of $\xi_n$ and $\xi_n^{\rm AC}$ for testing the null hypothesis that $\mX$ is independent of $Y$, based on 1,000 iterations. Note that the exact value of $\xi$ is not available in closed form when $\rho \neq 0$, so we estimate $\xi$ using $\xi_n^{\rm AC}$ from an independent sample of size 50,000.

Tables \ref{table_rho0}-\ref{table_rho0.9} confirm that, when $\alpha$ is large and $\rho \ne 0$, $\xi_n$ outperforms $\xi_n^{\rm AC}$ in terms of both lower RMSE and RF closer to the nominal level. This advantage becomes stronger as $\alpha$ increases. Notably, even when $\alpha=1$, $\xi_n$ consistently achieves a smaller RMSE than $\xi_n^{\rm AC}$. This observation aligns with the conventional wisdom \citep{MR79383,rosenbaum2010design} that (i) rank-based methods can be as efficient as their parametric counterparts, and (ii) nearest-neighbor-based methods can become fragile in the far tails, which rank transformation helps alleviate.

Table \ref{nullvariances} further compares the theoretical variances derived from Theorem \ref{an} with empirical variances obtained from 10,000 computed values of $\sqrt{n} \xi_n$ for $n=100$ and $d$ ranging from 1 to 10. The results are consistent with the theory and support our conjecture that the limiting variance of $\sqrt{n} \xi_n$ equals $\sigma_2^2$ when $d=2$.

\begin{table}
\centering
\caption{Means, RMSE, and rejection proportions of $\xi_n$ (``RAC'') and $\xi_n^{\rm AC}$ (``AC'') under different simulation settings and $\rho = 0$.}
\begin{tabular}{lllllllllll}\label{table_rho0}
                $d$ & $n$ & $\alpha$ & ${\rm Mean}_{\rm RAC}$ & ${\rm Mean}_{\rm AC}$ & ${\rm RMSE}_{\rm RAC}$ & ${\rm RMSE}_{\rm AC}$ & ${\rm RF}_{0.05}^{\rm RAC}$ & ${\rm RF}_{0.05}^{\rm AC}$ & ${\rm RF}_{0.1}^{\rm RAC}$ & ${\rm RF}_{0.1}^{\rm AC}$  \\\midrule
            2 & 30& 1 & -0.04 & -0.038 &
            0.188 &
            0.200&
            0.029&
            0.035&
            0.067&
            0.069\\
            &&10&&-0.038 &&0.196 &&0.029 &&0.066\\
            &&500&&-0.041 &&0.192 &&0.025 &&0.065\\
            &50& 1 & -0.028 & -0.021 &
            0.145 &
            0.146&
            0.030&
            0.036&
            0.065&
            0.074\\
            &&10&&-0.023 &&0.140 &&0.032 &&0.071\\
            &&500&&-0.022 &&0.145 &&0.031 &&0.065\\
            &100& 1 & -0.006 & -0.012 &
            0.098 &
            0.110&
            0.050&
            0.035&
            0.092&
            0.084\\
            &&10&&-0.011 &&0.109 &&0.046 &&0.085\\
            &&500&&-0.008 &&0.108 &&0.034 &&0.083\\
            3 & 30& 1 & -0.044 & -0.043 &
            0.201 &
            0.203&
            0.028&
            0.030&
            0.062&
            0.068\\
            &&10&&-0.043 &&0.201 &&0.028 &&0.062\\
            &&500&&-0.039 &&0.189 &&0.016 &&0.059\\
            &50& 1 & -0.013 & -0.031 &
            0.146 &
            0.162&
            0.031&
            0.043&
            0.075&
            0.075\\
            &&10&&-0.035 &&0.167 &&0.035 &&0.075\\
            &&500&&-0.027 &&0.149 &&0.030 &&0.066\\
            &100& 1 & -0.009 & -0.009 &
            0.104 &
            0.111&
            0.034&
            0.045&
            0.080&
            0.077\\
            &&10&&-0.010 &&0.107 &&0.036 &&0.077\\
            &&500&&-0.010 &&0.107 &&0.042 &&0.078\\
            5 & 30& 1 & -0.038 & -0.031 &
            0.211 &
            0.213&
            0.030&
            0.034&
            0.072&
            0.075\\
            &&10&&-0.029 &&0.213 &&0.040 &&0.091\\
            &&500&&-0.039 &&0.196 &&0.031 &&0.061\\
            &50& 1 & -0.030 & -0.022 &
            0.165 &
            0.167&
            0.029&
            0.034&
            0.077&
            0.079\\
            &&10&&-0.014 &&0.160 &&0.038 &&0.082\\
            &&500&&-0.020 &&0.145 &&0.025 &&0.060\\
            &100& 1 & -0.009 & -0.016 &
            0.112 &
            0.114&
            0.037&
            0.037&
            0.079&
            0.083\\
            &&10&&-0.014 &&0.121 &&0.049 &&0.098\\
            &&500&&-0.010 &&0.103 &&0.028 &&0.059\\
            \bottomrule
\end{tabular}
\end{table}

\begin{table}
\centering
\caption{Means, RMSE, and rejection proportions of $\xi_n$ (``RAC'') and $\xi_n^{\rm AC}$ (``AC'') under different simulation settings and $\rho = 0.5$.}
\begin{tabular}{lllllllllll}\label{table_rho0.5}
                $d$ & $n$ & $\alpha$ & ${\rm Mean}_{\rm RAC}$ & ${\rm Mean}_{\rm AC}$ & ${\rm RMSE}_{\rm RAC}$ & ${\rm RMSE}_{\rm AC}$ & ${\rm RF}_{0.05}^{\rm RAC}$ & ${\rm RF}_{0.05}^{\rm AC}$ & ${\rm RF}_{0.1}^{\rm RAC}$ & ${\rm RF}_{0.1}^{\rm AC}$  \\\midrule
            2 & 30& 1 & 0.104 & 0.090 &
            0.189 &
            0.196&
            0.139&
            0.107&
            0.024&
            0.196\\
            &&10&&0.029
            &&0.224 &&0.062 &&0.123\\
            &&500&&-0.042 &&0.269 &&0.021 &&0.065\\
            &50& 1 & 0.117 & 0.111 &
            0.140 &
            0.155&
            0.214&
            0.181&
            0.337&
            0.298\\
            &&10&&0.069 &&0.169 &&0.094 &&0.198\\
            &&500&&-0.018 &&0.218 &&0.028 &&0.071\\
            &100& 1 & 0.131 & 0.124 &
            0.097 &
            0.108&
            0.396&
            0.231&
            0.544&
            0.366\\
            &&10&&0.098 &&0.12 &&0.231 &&0.366\\
            &&500&&-0.007 &&0.185 &&0.037 &&0.080\\
            3 & 30& 1 & 0.233 & 0.218 &
            0.197 &
            0.215&
            0.289&
            0.306&
            0.471&
            0.459\\
            &&10&&0.105 &&0.291 &&0.123 &&0.232\\
            &&500&&-0.024 &&0.397 &&0.032 &&0.072\\
            &50& 1 & 0.264 & 0.246 &
            0.146 &
            0.162&
            0.542&
            0.484&
            0.701&
            0.626\\
            &&10&&0.137 &&0.237 &&0.215 &&0.359\\
            &&500&&-0.021 &&0.375 &&0.033 &&0.061\\
            &100& 1 & 0.286 & 0.270 &
            0.100 &
            0.114&
            0.871&
            0.823&
            0.947&
            0.905\\
            &&10&&0.194 &&0.165 &&0.560 &&0.706\\
            &&500&&-0.002 &&0.342 &&0.047 &&0.083\\
            5 & 30& 1 & 0.503 & 0.489 &
            0.454 &
            0.470&
            0.906&
            0.269&
            0.961&
            0.067\\
            &&10&&0.219 &&0.742 &&0.269 &&0.041\\
            &&500&&-0.025 &&0.983 &&0.028 &&0.067\\
            &50& 1 & 0.578 & 0.557 &
            0.372 &
            0.394&
            0.999&
            0.999&
            1.000&
            1.000\\
            &&10&&0.306 &&0.647 &&0.630 &&0.771\\
            &&500&&-0.023 &&0.974 &&0.035 &&0.064\\
            &100& 1 & 0.656 & 0.635 &
            0.288 &
            0.309&
            1.000&
            1.000&
            1.000&
            1.000\\
            &&10&&0.409 &&0.537 &&0.993 &&0.997\\
            &&500&&0.005 &&0.939 &&0.031 &&0.084\\
            \bottomrule
\end{tabular}
\end{table}

\begin{table}
\centering
\caption{Means, RMSE, and rejection proportions of $\xi_n$ (``RAC'') and $\xi_n^{\rm AC}$ (``AC'') under different simulation settings and $\rho = 0.9$.}
\begin{tabular}{lllllllllll}\label{table_rho0.9}
                $d$ & $n$ & $\alpha$ & ${\rm Mean}_{\rm RAC}$ & ${\rm Mean}_{\rm AC}$ & ${\rm RMSE}_{\rm RAC}$ & ${\rm RMSE}_{\rm AC}$ & ${\rm RF}_{0.05}^{\rm RAC}$ & ${\rm RF}_{0.05}^{\rm AC}$ & ${\rm RF}_{0.1}^{\rm RAC}$ & ${\rm RF}_{0.1}^{\rm AC}$  \\\midrule
            2 & 30& 1 & 0.512 & 0.478 &
            0.132 &
            0.161&
            0.997&
            0.903&
            0.998&
            0.958\\
            &&10&&0.227 &&0.394 &&0.310 &&0.460\\
            &&500&&-0.037 &&0.647 &&0.030 &&0.055\\
            &50& 1 & 0.536 & 0.512 &
            0.099 &
            0.117&
            1.000&
            0.995&
            1.000&
            0.999\\
            &&10&&0.334 &&0.275 &&0.779 &&0.872\\
            &&500&&-0.013 &&0.612 &&0.031 &&0.071\\
            &100& 1 & 0.563 & 0.546 &
            0.064 &
            0.076&
            1.000&
            1.000&
            1.000&
            1.000\\
            &&10&&0.425 &&0.175 &&1.000 &&1.000\\
            &&500&&0.007 &&0.584 &&0.049 &&0.101\\
            3 & 30& 1 & 0.317 & 0.299 &
            0.182 &
            0.197&
            0.503&
            0.449&
            0.680&
            0.611\\
            &&10&&0.151 &&0.314 &&0.166 &&0.275\\
            &&500&&-0.027 &&0.473 &&0.028 &&0.065\\
            &50& 1 & 0.347 & 0.332 &
            0.136 &
            0.144&
            0.766&
            0.746&
            0.887&
            0.859\\
            &&10&&0.210 &&0.239 &&0.388 &&0.538\\
            &&500&&-0.018 &&0.448 &&0.031 &&0.058\\
            &100& 1 & 0.367 & 0.354 &
            0.095 &
            0.103&
            0.985&
            0.965&
            0.993&
            0.987\\
            &&10&&0.274 &&0.164 &&0.83 &&0.906\\
            &&500&&0.003 &&0.417 &&0.036 &&0.084\\
            5 & 30& 1 & 0.118 & 0.100 &
            0.208 &
            0.232&
            0.134&
            0.143&
            0.241&
            0.244\\
            &&10&&0.060 &&0.233 &&0.064 &&0.148\\
            &&500&&-0.032 &&0.284 &&0.027 &&0.065\\
            &50& 1 & 0.130 & 0.131 &
            0.156 &
            0.163&
            0.196&
            0.206&
            0.316&
            0.322\\
            &&10&&0.089 &&0.179 &&0.135 &&0.241\\
            &&500&&-0.018 &&0.246 &&0.023 &&0.071\\
            &100& 1 & 0.151 & 0.144 &
            0.107 &
            0.115&
            0.373&
            0.352&
            0.525&
            0.504\\
            &&10&&0.115 &&0.126 &&0.275 &&0.406\\
            &&500&&-0.008 &&0.213 &&0.025 &&0.050\\
            \bottomrule
\bottomrule
\end{tabular}
\end{table}

\begin{table}
\centering
\caption{Theoretical and empirical variances of $\sqrt{n} \xi_{n}$ for $n = 100$, averaged over $10,000$ iterations.}
\begin{tabular}{lll}\label{nullvariances}
                $d$ & $\text{Theoretical Variance}$ & Empirical Variance \\\midrule
            1 &1.00 & 1.03 \\
            2 & 1.16 ({\rm conjectured}) & 1.18\\
            3 & 1.17 & 1.22\\
            4 & 1.26 & 1.26\\
            5 & 1.28 & 1.27\\
            6& 1.29 & 1.31\\
            7& 1.36 & 1.34\\
            8& 1.37& 1.35\\
            9&1.44& 1.37\\
            10&1.44& 1.43\\
            \bottomrule
\end{tabular} 
\end{table}

\section{Proofs}

For this section, we introduce additional notation. It is said that the index $j$ is equal to $N(i)$ by $i \rightarrow j$. Denote the infinity norm of a vector-valued function as $\|\mf\|_{\infty}:=\sup_{\mx}\|\mf(\mx)\|$. Let $M(i)$ denote the second nearest neighbor of $i$, and let $\hat K_n = \|\mF_n(\mX_1) - \mF_n(\mX_{M(1)})\|$. Let $\cB(\mx, r)$ denote the ball in $\R^n$ centered at $\mx \in \R^n$ with radius $r$. Also, let $\mathcal{F}_n = \sigma(\mX_1, ..., \mX_n).$ We use $\mX_n \leadsto \mX$ to denote weak convergence of $\mX_n$ to $\mX$. Furthermore, if $\mX_n \las 0$, we say that $\mX_n = o(1)$.

\subsection{Proof of consistency}

Assume that we are in the setting of Theorem \ref{consistency}. Define
\al{
Q_n := \frac{1}{n} \sum_{i=1}^n (F_n(Y_i) \wedge F_n(Y_{N(i)})) - G_n(Y_i)^2~~~{\rm and }~~~P_n := \frac{1}{n}\sum_{i=1}^n G_n(Y_i) (1 - G_n(Y_i)).
}
Also, let
$$
Q := \scaleint{4.5ex}\,\Var\big\{\E\big[\ind\big(Y\geq y\big) \given \mX \big] \big\} \d F(y)~~~{\rm and }~~~
P:= \scaleint{4.5ex}\,\Var\big\{\ind\big(Y\geq y\big)\big\}\d F(y)
$$
First, we will demonstrate that $\E[Q_n] \rightarrow Q$ and $\var{Q_n} \rightarrow 0$.

\begin{lemma}\label{cons_xni_las_xi}
As $n\rightarrow \infty$, $\mX_{N(i)} \las \mX_i$.
\end{lemma}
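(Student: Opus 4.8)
\emph{Approach.} The plan is to carry the problem into the rank space $\mF(\mX_1),\dots,\mF(\mX_n)$, prove there that the nearest-neighbor distance vanishes, and then invert the coordinatewise CDF transform to return to $\mX$-space. By exchangeability of the sample it suffices to treat $i=1$. I would rely on two standard facts: the coordinatewise Glivenko--Cantelli theorem, which gives $\|\mF_n-\mF\|_\infty\las 0$ (a finite sum of one-dimensional uniform convergences), and the fact that $\mX_1\in\mathrm{supp}(\mX)$ almost surely.

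\emph{Step 1: rank-space consistency.} I would first show $\|\mF(\mX_{N(1)})-\mF(\mX_1)\|\las 0$. Since $\mF(\mX_1)$ lies in the support of the law of $\mF(\mX)$ almost surely, every ball around it of radius $\eta>0$ has positive conditional probability, so by the (divergence) Borel--Cantelli lemma infinitely many of the i.i.d.\ copies $\mF(\mX_j)$ land inside it; hence $\min_{j\neq 1}\|\mF(\mX_j)-\mF(\mX_1)\|\las 0$. Inserting $\|\mF_n-\mF\|_\infty\las 0$ through the triangle inequality shows the empirical nearest-neighbor distance $\min_{j\neq 1}\|\mF_n(\mX_j)-\mF_n(\mX_1)\|$ also tends to $0$, and because $N(1)$ is its minimizer a further triangle inequality gives $\|\mF(\mX_{N(1)})-\mF(\mX_1)\|\las 0$. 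The uniform tie-breaking by the $U_i$'s is harmless here, as it only selects among points realizing a common distance.

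\emph{Step 2 and the main obstacle.} The crux is the inversion: passing from $\mF(\mX_{N(1)})\las\mF(\mX_1)$ back to $\mX_{N(1)}\las\mX_1$. This is delicate exactly because the coordinatewise map $\mF$ fails to be injective when a marginal has flat stretches (gaps in the support of a continuous marginal) or jumps (atoms), so in principle the selected neighbor could stay rank-close while ``jumping across'' a gap. I would resolve this using the nearest-neighbor \emph{selection}, not mere rank-closeness, via a coordinatewise comparison. Empirically, $\|\mF_n(\mX_j)-\mF_n(\mX_1)\|=0$ if and only if $\mX_j=\mX_1$, because a strict inequality in any coordinate forces a strict inequality of the corresponding marginal empirical CDF; hence if $\mX_1$ is a joint atom, duplicates occur infinitely often and pin $\mX_{N(1)}=\mX_1$ exactly. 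Away from atoms, for each coordinate $l$ at which $X_{1,l}$ sits at positive distance from every endpoint of a marginal flat---an almost sure event for an atomless marginal, since such endpoints form a null set---the limiting rank $F_l(X_{1,l})$ is separated by a fixed positive amount from the rank levels on the far side of the flat, so any gap-crossing competitor carries a bounded-below coordinate-$l$ rank-cost and is eventually excluded, leaving $X_{N(1),l}\to X_{1,l}$ by the continuity and strict monotonicity of the quantile transform on the local branch. The genuinely hard configuration is an atom sitting immediately adjacent to a flat, where the crossing cost is no longer bounded below; there the argument must instead exploit that the atom supplies an abundance of exact coordinate-$l$ matches, whose sheer number lets the remaining coordinates be matched at a rate that still beats any gap-crossing competitor. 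Combining the coordinatewise conclusions yields $\mX_{N(1)}\las\mX_1$.
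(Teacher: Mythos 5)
Your Step~1 is, in substance, the paper's entire proof of this lemma. The paper fixes $i=1$, splits the probability that every empirical rank distance $\nm{\mF_n(\mX_1)-\mF_n(\mX_j)}$ exceeds $t$ into a term $\pr{\sup_k\nm{\mF_n(\mX_k)-\mF(\mX_k)}>t/3}$ (killed by Dvoretzky--Kiefer--Wolfowitz) and the term $\pr{\bigcap_{j\geq 2}\cp{\nm{\mF(\mX_1)-\mF(\mX_j)}>t/3}}$, which it sends to $0$ by conditioning on $\mX_1$, using that $\mX_1$ lies in its support almost surely, and applying dominated convergence. That is exactly your Glivenko--Cantelli-plus-Borel--Cantelli argument in slightly different clothing. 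If anything, your almost-sure upgrade is cleaner: the paper passes from convergence in probability to $\las$ by asserting that the distance is non-increasing in $n$, which is delicate here because both $\mF_n$ and the index $N(1)$ change with $n$, whereas your quantity $\min_{j\leq n}\nm{\mF(\mX_j)-\mF(\mX_1)}$ is a minimum of fixed random variables over a growing index set and is genuinely monotone.

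The divergence is your Step~2, and here the comparison cuts both ways. You are right that the lemma as stated concerns $\mX$-space distances and that rank closeness does not imply value closeness when a marginal has flat stretches or atoms. But the paper never performs this inversion: its opening display simply equates the event $\cp{\nm{\mX_1-\mX_{N(1)}}>t}$ with the rank-space event $\bigcap_{j\geq 2}\cp{\nm{\mF_n(\mX_1)-\mF_n(\mX_j)}>t}$, i.e., it implicitly treats closeness of ranks and closeness of values as the same thing. So what you call the crux is a step the paper silently skips, not one it carries out; what the paper actually establishes is $\nm{\mF(\mX_{N(1)})-\mF(\mX_1)}\las 0$, which is precisely your Step~1. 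Judged as a self-contained proof of the statement for arbitrary laws of $\mX$ (and recall the paper explicitly allows discrete $\mX$ with ties), your proposal is also incomplete: the ``atom adjacent to a flat'' configuration is resolved only by a rate heuristic---the assertion that the abundance of exact coordinate matches beats any gap-crossing competitor is plausible, but you give no bound, and you would additionally need to show that the winning atom-matching neighbor converges in its remaining coordinates. In short: your Step~1 alone reproduces (and slightly repairs) the paper's argument; the extra work you attempt in Step~2 addresses a genuine gap, but neither your sketch nor the paper closes it.
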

\begin{proof}
Without loss of generality, set $i=1$, and take any $t > 0$. Then, by the triangle inequality,
\al{
\pr{\|\mX_1 - \mX_{N(1)}\| > t} =&  \pr{ \bigcap_{j=2}^n \cp{\|\mF_n(\mX_1) - \mF_n(\mX_{j})\|> t)}}\\
\leq&  \pr{ \sup_{1 \leq k \leq n} \|\mF_n(\mX_k) - \mF(\mX_{k})\|> t/3}\\
&+ \pr{ \bigcap_{j=2}^n \{\|\mF(\mX_1) - \mF(\mX_{j})\|> t/3)\}}.
}
Using the Dvoretsky-Kiefer-Wolfowitz inequality \citep{dvoretzky1956asymptotic}, the first term tends to $0$ by letting $n$ tend to $\infty$.

Since the data are independent and identically distributed,
\al{
\pr{ \bigcap_{j=2}^n \{\|\mF(\mX_1) - \mF(\mX_{j})\|> t/3)\} \cdl \mX_1} &= \pr{\nm{\mF(\mX_1) - \mF(\mX_2)} > t/3 | \mX_1}^{n-2}\\
&=  \pr{\mF(\mX_2) \not \in \cB\pa{\mF(\mX_1), t/3} \cdl \mX_1}^{n-2}.
}
As $\mX_1$ is contained in its support with probability one, $ \pr{\mF(\mX_2) \not \in \cB\pa{\mF(\mX_1), t/3} \cdl \mX_1} < 1$ almost surely. The dominated convergence theorem demonstrates
$$
\pr{ \bigcap_{j=2}^n \{\|\mF(\mX_1) - \mF(\mX_{j})\|> t/3)\}} = \ep{\pr{\mF(\mX_2) \not \in \cB\pa{\mF(\mX_1), t/3} \cdl \mX_1}^{n-2}} \rightarrow 0
$$
as $n$ tends to $\infty$. We conclude  $\|\mX_1 - \mX_{N(1)}\| \las 0$ because $\|\mX_1 - \mX_{N(1)}\| \overset{\P}{\rightarrow} 0$ and $\|\mX_1 - \mX_{N(1)}\|$ is non-increasing in $n$.
\end{proof}

Using the same proof of Lemma 11.8 in \cite{azadkia2019simple}, we  establish
the following result; its proof is omitted due to the similarity to \cite{azadkia2019simple}.

\begin{lemma}\label{eqq}
    As $n\rightarrow \infty$, $\ep{Q_n} \rightarrow Q$. 
\end{lemma}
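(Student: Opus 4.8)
The plan is to mirror the argument for Lemma 11.8 of \cite{azadkia2019simple}, with Lemma \ref{cons_xni_las_xi} supplying the single ingredient that is specific to the rank-based graph. First I would exploit the exchangeability of $\{(\mX_i,Y_i)\}_{i\in[n]}$ (the tie-breaking rule being symmetric in the indices) to collapse the average defining $Q_n$ into two representative expectations, writing $\ep{Q_n} = \ep{F_n(Y_1)\wedge F_n(Y_{N(1)})} - \ep{G_n(Y_1)^2}$. The second expectation is elementary: expanding $G_n(Y_1)^2 = n^{-2}\sum_{j,k}\ind(Y_j\geq Y_1)\ind(Y_k\geq Y_1)$, the $O(n)$ diagonal and index-$1$ terms contribute $O(n^{-1})$, while the $\approx n^2$ off-diagonal terms each have expectation $\int G(y)^2\,\d F(y)$ with $G(y):=\pr{Y\geq y}$, so bounded convergence yields $\ep{G_n(Y_1)^2}\to \int G(y)^2\,\d F(y)$.

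For the main term I would first replace $F_n$ by $F$ using $\|F_n-F\|_\infty\las 0$ (Glivenko--Cantelli) together with the boundedness of the integrand in $[0,1]$, so that $\ep{F_n(Y_1)\wedge F_n(Y_{N(1)})}$ and $\ep{F(Y_1)\wedge F(Y_{N(1)})}$ share the same limit. Since $F$ is nondecreasing, $F(Y_1)\wedge F(Y_{N(1)}) = F(\min(Y_1,Y_{N(1)}))$, and introducing a copy $Y'$ of $Y$ that is independent of the whole sample gives $F(\min(Y_1,Y_{N(1)})) = \pr{Y'\leq Y_1,\,Y'\leq Y_{N(1)}\mid Y_1,Y_{N(1)}}$. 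Conditioning on $\mathcal{F}_n$ (and the tie-breaking variables), the responses are independent with $Y_i$ drawn from the conditional law given $\mX_i$; writing $G(y\mid\mx):=\pr{Y\geq y\mid \mX=\mx}$, this collapses the main term to
\[
\ep{F(Y_1)\wedge F(Y_{N(1)})} = \int \ep{G(y\mid\mX_1)\,G(y\mid\mX_{N(1)})}\,\d F(y).
\]

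The crux is to show $\ep{G(y\mid\mX_1)\,G(y\mid\mX_{N(1)})}\to \ep{G(y\mid\mX)^2}$ for $F$-a.e.\ $y$, after which bounded convergence and the identity $\Var\{\E[\ind(Y\geq y)\mid\mX]\} = \ep{G(y\mid\mX)^2} - G(y)^2$ assemble the two limits into $Q$. This last step is the main obstacle: Theorem \ref{consistency} assumes no continuity, so $\mx\mapsto G(y\mid\mx)$ is merely measurable, and one cannot simply push the convergence $\mX_{N(1)}\las\mX_1$ of Lemma \ref{cons_xni_las_xi} through it. I would handle this exactly as in \cite{azadkia2019simple}: approximate $G(y\mid\cdot)$ in $L^2$ by a continuous function, control the continuous part via the almost sure convergence $\mX_{N(1)}\to\mX_1$, and bound the approximation error through a uniform control on how rarely $\mX_{N(1)}$ can fall into a set of small measure --- equivalently, a Lebesgue-differentiation/martingale argument showing that nearest-neighbor averaging of $G(y\mid\cdot)$ converges. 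Verifying that the rank-based graph $\{N(i)\}$ does not disturb this argument reduces, thanks to Lemma \ref{cons_xni_las_xi}, to the same stability and bounded-degree properties of the nearest neighbor graph already established in \cite{azadkia2019simple}, which is precisely why the proof transfers essentially verbatim.
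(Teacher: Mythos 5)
Your proposal is correct and is essentially the paper's own proof: the paper omits the argument entirely, citing Lemma 11.8 of \cite{azadkia2019simple}, and your write-up is a faithful reconstruction of exactly that argument, with Lemma \ref{cons_xni_las_xi} (a.s.\ convergence of $\mX_{N(1)}$ to $\mX_1$ for the rank-based graph) and the Lusin-type measurable-function step (the paper's Lemma \ref{cons_arbmeas}, which rests on the same bounded in-degree property that any Euclidean NNG of the rank vectors inherits) supplying the only rank-specific ingredients. No discrepancy to report.
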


We establish the following lemmas to show that $\var{Q_n} \rightarrow 0$. We simply cite the first from \cite{azadkia2019simple}, which uses Lusin's Theorem to establish the convergence in probability of an arbitrary measurable function of $\mX_{N(1)}$.

\begin{lemma}\label{cons_arbmeas}
For any measurable $f : \R^d \rightarrow \R$, $f(\mX_{N(1)}) - f(\mX_1)\overset{\P}{\rightarrow} 0$ as $n$ goes to $\infty$.
\end{lemma}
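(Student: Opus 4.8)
The plan is to reduce the measurable map $f$ to a continuous one off a small-probability set via Lusin's theorem, transfer the continuous case to the almost-sure convergence already granted by Lemma \ref{cons_xni_las_xi}, and handle the exceptional set through the bounded in-degree of the nearest-neighbor graph. Fix $\epsilon,\delta>0$ and let $\mu$ denote the law of $\mX$ on $\R^d$. Since $\mu$ is a Radon measure and $f$ is Borel, Lusin's theorem yields a closed set $C\subseteq\R^d$ with $\mu(C^c)<\delta$ on which $f$ is continuous; by the Tietze extension theorem there is a continuous $g:\R^d\to\R$ agreeing with $f$ on $C$. Setting $A:=\{\mx:f(\mx)=g(\mx)\}\supseteq C$, we have $\mu(A^c)<\delta$.

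On the event $\{\mX_1\in A\}\cap\{\mX_{N(1)}\in A\}$, the functions $f$ and $g$ coincide at both $\mX_1$ and $\mX_{N(1)}$, so a union bound gives
\[
\pr{\ap{f(\mX_{N(1)})-f(\mX_1)}>\epsilon}\le \pr{\ap{g(\mX_{N(1)})-g(\mX_1)}>\epsilon}+\pr{\mX_1\in A^c}+\pr{\mX_{N(1)}\in A^c}.
\]
The middle term equals $\mu(A^c)<\delta$. For the continuous term, $g$ is continuous and $\mX_{N(1)}\las\mX_1$ by Lemma \ref{cons_xni_las_xi}, so $g(\mX_{N(1)})-g(\mX_1)\las 0$ and hence $\pr{\ap{g(\mX_{N(1)})-g(\mX_1)}>\epsilon}\to 0$ as $n\to\infty$.

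It remains to control $\pr{\mX_{N(1)}\in A^c}$, for which a naive union bound over the $n-1$ candidate neighbors would lose a factor of $n$. Instead I use the counting identity
\[
\sum_{i=1}^n \ind(\mX_{N(i)}\in A^c)=\sum_{j=1}^n \ind(\mX_j\in A^c)\cdot\ap{\cp{i:N(i)=j}}.
\]
The Euclidean nearest-neighbor graph on $\{\mF_n(\mX_k);k\in[n]\}\subseteq\R^d$ has in-degrees bounded by a constant $c_d$ depending only on $d$ (a standard cone/packing argument, which survives the uniform tie-breaking), so the right side is at most $c_d\sum_{j}\ind(\mX_j\in A^c)$. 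Taking expectations and using that the pairs $(\mX_i,\mX_{N(i)})$ are identically distributed across $i$ by exchangeability of the i.i.d. draws $(\mX_k,U_k)$, we get $n\,\pr{\mX_{N(1)}\in A^c}\le c_d\, n\,\mu(A^c)$, i.e. $\pr{\mX_{N(1)}\in A^c}\le c_d\,\mu(A^c)<c_d\delta$. Combining the three bounds yields $\limsup_n \pr{\ap{f(\mX_{N(1)})-f(\mX_1)}>\epsilon}\le(1+c_d)\delta$, and letting $\delta\downarrow 0$ gives $f(\mX_{N(1)})-f(\mX_1)\overset{\P}{\rightarrow}0$.

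The main obstacle is precisely this last step: since $f$ is only measurable, pointwise convergence of the arguments does not transfer to the function values, and on the Lusin exceptional set the two terms $f(\mX_1)$ and $f(\mX_{N(1)})$ cannot be linked through continuity. The resolution hinges on showing $\mX_{N(1)}$ lands outside $A$ with probability $O(\delta)$ rather than $O(n\delta)$, which is exactly what the $O(1)$ in-degree bound together with exchangeability provides; one should just check that this geometric in-degree bound remains valid once ties in the rank vectors are broken by the independent uniforms.
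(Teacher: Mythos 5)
Your architecture is exactly that of the proof the paper invokes: the paper gives no argument of its own here, simply citing Azadkia and Chatterjee's Lusin-theorem proof, and your Lusin--Tietze reduction, the union bound over the exceptional set, and the control of $\pr{\mX_{N(1)}\in A^c}$ via exchangeability plus an in-degree bound for the nearest-neighbor graph is precisely that argument, correctly transplanted to the rank-based graph. The continuous part is handled properly (Lemma \ref{cons_xni_las_xi} gives almost sure convergence), and the counting identity together with exchangeability of the pairs $(\mX_k,U_k)$ is the right mechanism for avoiding the factor of $n$.

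The one step that is false as written is the pathwise claim that ``in-degrees are bounded by a constant $c_d$ \ldots which survives the uniform tie-breaking.'' Theorem \ref{consistency} explicitly allows $\mX$ to have atoms, and $\mF_n(\mX_i)=\mF_n(\mX_j)$ holds exactly when $\mX_i=\mX_j$, so with positive probability the rank vectors contain a cluster of $m\ge 2$ coincident points. Each member of such a cluster has nearest-neighbor distance zero and draws its nearest neighbor uniformly from the other $m-1$ members; with probability $(m-1)^{-(m-1)}>0$ all of them point at the same vertex $j$, whose realized in-degree is then $m-1$, which can be of order $n$. What is true, and what your argument actually needs, is the conditional-expectation version: writing $D_j$ for the in-degree and conditioning on $\sigma(\mX_1,\dots,\mX_n)$, the cone/packing argument does bound the contribution of non-coincident points by $c_d$ pathwise (two points at equal positive distance from the apex of a cone of angle less than $60^\circ$ are strictly closer to each other than to the apex, so ties at positive distance cause no harm), while a coincident cluster contributes exactly $1$ in conditional expectation, since each of its $m-1$ other members picks $j$ with probability $1/(m-1)$. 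Hence $\E\bp{D_j \mid \mX_1,\dots,\mX_n}\le c_d+1$, and since $\ind(\mX_j\in A^c)$ is measurable with respect to this $\sigma$-field, conditioning first and then taking expectations turns your display into $n\,\pr{\mX_{N(1)}\in A^c}\le (c_d+1)\,n\,\mu(A^c)$. With that one-line repair your proof is complete and coincides with the argument the paper cites.
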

The next lemma shows that $Y_{N(i)}$ given $\mX_{N(i)}$ behaves like an independent copy of $Y$ given $\mX$.
\begin{lemma}\label{cons_indgivenxni}
For distinct indices $1 \leq i,j,k \leq n$, and $\mx \in (\mathbb R^d)^3$, $(Y_i, Y_j, Y_{N(i)})$ conditional on $(\mX_i, \mX_j, \mX_{N(i)})=\mx$ is identically distributed to $(Y_i, Y_j, Y_k)$ conditional on $(\mX_i, \mX_j, \mX_k) = \mx$.
\end{lemma}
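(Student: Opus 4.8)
The plan is to exploit the one structural feature that makes the rank-based neighbor tractable: the index $N(i)$ is a deterministic function of the covariates $\{\mX_\ell\}_{\ell\in[n]}$ and the independent tie-breaking draws $\{U_\ell\}_{\ell\in[n]}$, so it carries no information about the responses beyond what the covariates already carry. Concretely, I would condition on the enlarged $\sigma$-field $\mathcal{G}:=\sigma(\mX_1,\dots,\mX_n,U_1,\dots,U_n)$, with respect to which $N(i)$ is measurable. Since the pairs $(\mX_\ell,Y_\ell)$ are i.i.d.\ and the $U_\ell$ are independent of the data, conditionally on $\mathcal{G}$ the responses $Y_1,\dots,Y_n$ are independent, with $Y_\ell$ distributed as $\mathcal{L}(Y\given\mX=\mX_\ell)$, the conditional law of $Y$ given $\mX$.

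Next I would fix a bounded measurable test function $g$ and set
\[
h(\mx_1,\mx_2,\mx_3):=\int g(y_1,y_2,y_3)\,\mathcal{L}(\d y_1\given \mX=\mx_1)\,\mathcal{L}(\d y_2\given \mX=\mx_2)\,\mathcal{L}(\d y_3\given \mX=\mx_3).
\]
The right-hand side of the lemma is immediately $h(\mx_1,\mx_2,\mx_3)$ by the i.i.d.\ product structure, so it suffices to prove $\ep{g(Y_i,Y_j,Y_{N(i)})\given \mX_i,\mX_j,\mX_{N(i)}}=h(\mX_i,\mX_j,\mX_{N(i)})$. Decomposing over the realized neighbor and using that $\ind(N(i)=m)$ is $\mathcal{G}$-measurable,
\[
\ep{g(Y_i,Y_j,Y_{N(i)})\given \mathcal{G}}=\sum_{m\neq i}\ind(N(i)=m)\,\ep{g(Y_i,Y_j,Y_m)\given \mathcal{G}},
\]
and for every $m\notin\{i,j\}$ the conditional independence of the responses collapses the summand to $h(\mX_i,\mX_j,\mX_m)$. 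Summing and then taking a further conditional expectation given $\sigma(\mX_i,\mX_j,\mX_{N(i)})\subseteq\mathcal{G}$ (the tower property) delivers the identity.

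The main obstacle is the boundary event $\{N(i)=j\}$: there $Y_{N(i)}=Y_j$, so the corresponding summand $\ep{g(Y_i,Y_j,Y_j)\given\mathcal{G}}$ is a degenerate law concentrated on $\{y_2=y_3\}$ rather than the product $h(\mX_i,\mX_j,\mX_j)$. I would isolate this term and observe that it forces $\mX_{N(i)}=\mX_j$, so it can affect the conditional law only on the diagonal $\{\mx_2=\mx_3\}$; for conditioning points $\mx$ with distinct coordinates—the configuration relevant both to the statement (distinct $i,j,k$ yield a product reference law for which the diagonal is null) and to the downstream variance computation—the event $\{N(i)=j\}$ is incompatible with $\mX_{N(i)}=\mx_3\neq\mx_2=\mX_j$ and simply drops out, leaving exactly $h$. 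Pinning this down rigorously (restricting to the off-diagonal support, or using the continuity assumptions of Theorem~\ref{an} to discard the diagonal wherever the lemma is subsequently applied) is the only delicate step; the remainder is routine tower-property bookkeeping once the $\mathcal{G}$-measurability of $N(i)$ and the conditional independence of the responses are established.
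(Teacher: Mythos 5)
Your proposal is correct and follows essentially the same route as the paper's proof: decompose over the realized value of $N(i)$, use that $N(i)$ is measurable with respect to the covariates and the tie-breaking draws, and invoke the conditional independence of the responses given that $\sigma$-field to identify the conditional law of $Y_{N(i)}$ with that of a fresh index $Y_k$. If anything, you are more careful than the paper, whose proof asserts the factorization of $Y_{N(i)}$ from $(Y_i,Y_j)$ given $(\mX_i,\mX_j,\mX_{N(i)})$ without flagging that it fails on the event $\{N(i)=j\}$ (equivalently, on the diagonal $\mx_2=\mx_3$ of the conditioning point), which is precisely the degenerate case you isolate and dispose of explicitly.
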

\begin{proof}
First, we demonstrate $\pr{Y_{N(i)} \leq y | \mX_1, ..., \mX_n} = \pr{Y_{N(i)} \leq y| \mX_{N(i)}}$ almost surely. Observe that 
\al{
 \pr{Y_{N(i)} \leq  y| \mX_1, ..., \mX_n}&= \sum_{j=1}^{n} \pr{Y_{N(i)} \leq  y,i \rightarrow j | \mX_1,...,\mX_n}\\
&= \sum_{j=1}^n \pr{Y_j \leq y | \mX_j} \cdot \ind(i \rightarrow j)\\
&= \sum_{j=1}^n \pr{Y_{N(i)}\leq y| \mX_{N(i)}} \cdot \ind(i\rightarrow  j)\\
&= \pr{Y_{N(i)} \leq y| \mX_{N(i)}}.
}
Next, we will show $(\mX_{N(i)}, Y_{N(i)})$ and $(\mX_1, Y_1)$ are identically distributed. For any Borel subset $B \subseteq \R^{d+1}$,
\al{
\pr{(\mX_{N(i)}, Y_{N(i)}) \in B} &= \sum_{j=1}^n \pr{(\mX_{j}, Y_{j}) \in B| i \rightarrow j}\pr{i \rightarrow j}\\
&= \frac{1}{n-1} \sum_{j \neq i} \pr{(\mX_j, Y_j) \in B| i \rightarrow j}\\
&=\pr{(\mX_1, Y_1) \in B},}
which follows by the identical distribution of $(\mX_1, Y_1),..., (\mX_n, Y_n)$.

Applying the independence of $Y_{N(i)}$ from $(\mX_i, Y_i)$ conditional on $\mX_{N(i)}$, 
\al{
\pr{Y_i \leq y_i, Y_j \leq y_j, Y_{N(i)} \leq y \cdl \mX_i, \mX_j, \mX_{N(i)}} &= \pr{Y_{N(i)} \leq y | \mX_{N(i)}} \pr{Y_i \leq y, Y_j \leq y_j\cdl \mX_i, \mX_j} 
}
almost surely. Then, using the fact that $Y_{N(i)}$ given $ X_{N(i)}$ is identically distributed to $Y_k$ given $X_k$, say, where $k \neq i,j$, we establish the claim.
\end{proof}

The next lemmas result in two asymptotic independence relationships that will greatly simplify the variance calculations in Lemma \ref{cons_var0} ahead.

\begin{lemma}\label{cons_cdl_gc}
As $n \rightarrow \infty$,
$$
\pr{\lim_{n\rightarrow \infty}\mX_{N(i)} = \mX_i\cdl \mX_{N(i)}, \mX_i, \mX_j} \overset{\P}{\rightarrow} 1\\
$$
and
$$
\pr{\lim_{n\rightarrow \infty}\|\mF_n - \mF\|_\infty = 0 \cdl \mX_{N(i)}, \mX_{N(j)}, \mX_i, \mX_j} \overset{\P}{\rightarrow} 1.
$$
\end{lemma}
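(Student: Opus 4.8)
The plan is to reduce both claims to a single elementary fact about conditional expectations: if an event $A$ satisfies $\pr{A}=1$, then for \emph{every} sub-$\sigma$-algebra $\mathcal G$ one has $\pr{A\cdl\mathcal G}=1$ almost surely. Indeed, $\pr{A\cdl\mathcal G}=\ep{\ind_A\cdl\mathcal G}$ is $[0,1]$-valued and satisfies $\E[\pr{A\cdl\mathcal G}]=\pr{A}=1$, so a $[0,1]$-valued random variable with mean $1$ must equal $1$ almost surely. Consequently, to prove each displayed convergence it suffices to show that the event inside the conditional probability has probability one; the conditional probability is then identically $1$ almost surely for every $n$, and in particular converges to $1$ in probability (indeed in the stronger a.s.\ sense).

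First I would treat the event $A:=\{\lim_{n\to\infty}\mX_{N(i)}=\mX_i\}$. Lemma~\ref{cons_xni_las_xi} gives $\mX_{N(i)}\las\mX_i$, which is exactly the statement $\pr{A}=1$. Applying the fact above with $\mathcal G=\sigma(\mX_{N(i)},\mX_i,\mX_j)$ yields $\pr{A\cdl \mX_{N(i)},\mX_i,\mX_j}=1$ almost surely, establishing the first display. Although this $\sigma$-algebra depends on $n$ through the nearest-neighbor index $N(i)$, that is immaterial: the fact holds for an arbitrary sub-$\sigma$-algebra and requires no control over how $\mathcal G$ varies with $n$.

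For the second display I would set $B:=\{\lim_{n\to\infty}\|\mF_n-\mF\|_\infty=0\}$; it remains only to verify $\pr{B}=1$. Since $F_{n,j}$ depends on $\mx$ only through its $j$th coordinate, one has $\|\mF_n(\mx)-\mF(\mx)\|\le\sqrt{d}\,\max_{1\le j\le d}\sup_{t\in\R}|F_{n,j}(t)-F_j(t)|$ uniformly in $\mx$, so it is enough to control each coordinate. The Dvoretsky--Kiefer--Wolfowitz inequality gives $\pr{\sup_{t\in\R}|F_{n,j}(t)-F_j(t)|>\varepsilon}\le 2e^{-2n\varepsilon^2}$, which is summable in $n$; a Borel--Cantelli argument along a sequence $\varepsilon\downarrow 0$ upgrades the in-probability bound already used in Lemma~\ref{cons_xni_las_xi} to $\|\mF_n-\mF\|_\infty\las 0$, i.e.\ $\pr{B}=1$. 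Invoking the elementary fact once more, now with $\mathcal G=\sigma(\mX_{N(i)},\mX_{N(j)},\mX_i,\mX_j)$, gives the second display. The argument carries essentially no obstacle: the only point needing care is the passage from the DKW bound (which is stated in probability) to the almost-sure convergence of $\|\mF_n-\mF\|_\infty$, handled by the routine Borel--Cantelli step above, while everything else rests on the single observation that conditioning a probability-one event on any $\sigma$-algebra returns the constant $1$.
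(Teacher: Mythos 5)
Your proof is correct, and it takes a genuinely different --- and more economical --- route than the paper's. The paper never invokes the fact that a probability-one event has conditional probability one given an arbitrary sub-$\sigma$-algebra. Instead it uses Lemma~\ref{cons_arbmeas} to argue that deleting $\mX_{N(i)}$ (resp.\ $\mX_{N(i)},\mX_{N(j)}$) from the conditioning set perturbs the conditional probability only by $o_{\P}(1)$, and then evaluates the conditional probability given $(\mX_i,\mX_j)$ alone: for the first display it observes that $\pr{\lim_{n\to\infty}\mX_{N(i)}=\mX_i\cdl \mX_1,\dots,\mX_n}=\pr{\lim_{n\to\infty}\mX_{N(i)}=\mX_i\cdl\mX_i}=1$ (citing the proof of Lemma~\ref{cons_xni_las_xi} and dominated convergence), and for the second it cites Glivenko--Cantelli. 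Your argument short-circuits all of this: Lemma~\ref{cons_xni_las_xi} asserts $\mX_{N(i)}\las\mX_i$, so the event in the first display has probability one; the coordinatewise DKW-plus-Borel--Cantelli argument (equivalently, the multivariate Glivenko--Cantelli theorem) shows the event in the second display has probability one; and conditioning a probability-one event on any sub-$\sigma$-algebra, even one varying with $n$, returns the constant $1$ almost surely. What you gain: a shorter proof that avoids the conditioning-swap step (whose justification via Lemma~\ref{cons_arbmeas} is the least rigorous point of the paper's argument), and a stronger conclusion --- the conditional probabilities equal $1$ almost surely for every $n$, rather than being $1+o_{\P}(1)$. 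What the paper's route offers in exchange is that its swap technique is the one genuinely needed in the subsequent lemmas (Lemmas~\ref{cons_Ni_distinct} and~\ref{cons_plim_yni}), where the conditioned events do \emph{not} have probability one and your shortcut is unavailable; proving the present lemma that way rehearses the tool in its simplest instance. Note finally that your argument leans crucially on the almost-sure (not merely in-probability) convergence asserted in Lemma~\ref{cons_xni_las_xi}; since that is exactly what the lemma states, this reliance is legitimate.
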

\begin{proof}
By Lemma \ref{cons_arbmeas},
\al{
\pr{\lim_{n\rightarrow \infty}\mX_{N(i)} = \mX_i\cdl \mX_i \neq \mX_j, \mX_{N(i)}, \mX_i, \mX_j} &= \pr{\lim_{n\rightarrow \infty}\mX_{N(i)} = \mX_i\cdl \mX_i \neq \mX_j, \mX_i, \mX_j} + o_{\P}(1)\\
&= 1 + o_{\P}(1),
}
where the second equality comes from the fact that $\pr{\lim_{n\rightarrow \infty}\mX_{N(i)} = \mX_i\cdl \mX_1, ..., \mX_n}$ is equal to the conditional probability $ \pr{\lim_{n\rightarrow \infty}\mX_{N(i)} = \mX_i\cdl \mX_i}$; this is $1$ by the proof of Lemma \ref{cons_xni_las_xi}, and then we apply the dominated convergence theorem.

In a similar manner, by Lemma \ref{cons_arbmeas},
\al{
\pr{\lim_{n\rightarrow \infty}\|\mF_n - \mF\|_\infty = 0 \cdl \mX_{N(i)}, \mX_{N(j)}, \mX_i, \mX_j} &= \pr{\lim_{n\rightarrow \infty}\|\mF_n - \mF\|_\infty = 0 \cdl \mX_i, \mX_j} + o_{\P}(1)\\
&= 1 + o_{\P}(1),
}
where the second equality comes from the Glivenko-Cantelli theorem, completing the proof.
\end{proof}
\begin{lemma}\label{cons_Ni_distinct}
    For $i \neq j$, 
    $$
    \pr{i \rightarrow  j \cdl \mX_{N(i)}, \mX_i, \mX_j} \overset{\P}{\rightarrow} 0
    $$
    and
    $$
    \pr{i \rightarrow N(j) \cdl \mX_{N(i)}, \mX_{N(j)}, \mX_i, \mX_j} \overset{\P}{\rightarrow} 0.
    $$
\end{lemma}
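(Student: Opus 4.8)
The plan is to remove the conditioning at the outset and reduce each assertion to a statement about an \emph{unconditional} probability. The point is that both displayed quantities are conditional probabilities of events, hence nonnegative random variables whose expectations, by the tower property, equal the corresponding unconditional probabilities. Writing $Z_n := \pr{i \rightarrow j \cdl \mX_{N(i)}, \mX_i, \mX_j}$, we have $Z_n \geq 0$ and $\E[Z_n] = \pr{i \rightarrow j}$, so Markov's inequality gives $\pr{Z_n > \varepsilon} \leq \pr{i \rightarrow j}/\varepsilon$ for every $\varepsilon > 0$; the identical reasoning applies to the second statement with $\pr{i \rightarrow N(j)}$ in place of $\pr{i \rightarrow j}$. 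It thus suffices to prove $\pr{i \rightarrow j} \to 0$ and $\pr{i \rightarrow N(j)} \to 0$.

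The first unconditional bound follows from exchangeability. Since the triples $\{(\mX_k, Y_k, U_k)\}_{k \in [n]}$ are i.i.d., and since $N(i)$ is almost surely a single well-defined index in $[n]\setminus\{i\}$ (ties being resolved by the independent uniforms $U_k$), the $n-1$ events $\{N(i) = j\}$, $j \neq i$, are equiprobable and partition the sample space. Hence $\pr{i \rightarrow j} = \pr{N(i) = j} = 1/(n-1) \to 0$.

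For the second bound, observe that $i \rightarrow N(j)$ is precisely the event $N(i) = N(j)$, that is, $i$ and $j$ share a common nearest neighbor. Here I would invoke the classical geometric fact that a Euclidean nearest-neighbor graph in $\R^d$ has all in-degrees bounded by a constant $c_d$ depending only on $d$: if a point $k$ is the nearest neighbor of two distinct points $a$ and $b$, then $\|a-k\| \leq \|a-b\|$ and $\|b-k\| \leq \|a-b\|$, so in the triangle $akb$ the side opposite $k$ is longest and the angle at $k$ is at least $60^\circ$; only $c_d$ directions in $\R^d$ can be pairwise separated by $60^\circ$. Because the transformed points $\mF_n(\mX_1), \dots, \mF_n(\mX_n)$ lie in $\R^d$, this bound applies directly. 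Setting $D_k := |\{i : N(i) = k\}|$, we have $\sum_k D_k = n$ and $D_k \leq c_d$, whence the number of ordered pairs sharing a nearest neighbor obeys $\sum_k D_k(D_k - 1) \leq (c_d - 1)n$. By exchangeability, $\pr{i \rightarrow N(j)} = \pr{N(i) = N(j)} = \E[\sum_k D_k(D_k-1)]/(n(n-1)) \leq (c_d-1)/(n-1) \to 0$.

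The main obstacle is to make the in-degree bound fully rigorous given the ties induced by the rank transformation. Since $\mF_n$ takes values in the grid $\{1/n, \dots, 1\}^d$, equal distances among the transformed points are generic rather than exceptional, and uniform tie-breaking turns the strict inequalities in the angle argument into weak ones. I would therefore check that the argument is robust to this: the weak inequalities $\|a-k\| \leq \|a-b\|$ and $\|b-k\| \leq \|a-b\|$ still force the angle at $k$ to be at least $60^\circ$, and the maximal number of unit vectors with pairwise inner products at most $1/2$ remains a finite dimensional constant, so $c_d$—and hence the conclusion—is unaffected by the presence of ties.
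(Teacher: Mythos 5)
Your opening reduction is valid: both displayed quantities are nonnegative, the tower property gives $\ep{\pr{i \rightarrow j \cdl \mX_{N(i)}, \mX_i, \mX_j}} = \pr{i \rightarrow j}$ (and similarly for the second), and Markov's inequality converts convergence of the means to convergence in probability. Your treatment of the first claim is then complete and correct: by exchangeability $\pr{i \rightarrow j} = 1/(n-1) \rightarrow 0$. This is genuinely different from, and cleaner than, the paper's argument, which instead splits on $\{\mX_i = \mX_j\}$ versus $\{\mX_i \neq \mX_j\}$ and invokes the second Borel--Cantelli lemma together with Lemma \ref{cons_cdl_gc} (Glivenko--Cantelli plus $\mX_{N(i)} \rightarrow \mX_i$).

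The second claim, however, has a genuine gap: the deterministic in-degree bound $D_k \leq c_d$ fails in precisely the setting this lemma must cover. Theorem \ref{consistency} imposes no continuity on $\mX$---the remark following it stresses that discrete $\mX$ with ties is allowed---and the paper's own proof of this lemma spends its entire first half on the case $\pr{\mX_i = \mX_j} > 0$. Note that $\mF_n(\mX_i) = \mF_n(\mX_j)$ exactly when $\mX_i = \mX_j$, so if $\mX$ has an atom, then with positive probability a group of $m$ sample points occupies a single location in rank space, where $m$ grows linearly in $n$. Each member of such a group has all other $m-1$ members tied at distance zero, so $N(i)$ is uniform over them, and the in-degree of a single member can be as large as $m-1$. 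Your robustness paragraph only addresses equal distances among \emph{distinct} points (where the $60^\circ$ argument indeed survives weak inequalities); it says nothing about \emph{coincident} points, for which the direction vectors are zero and the angle argument is vacuous. The gap is repairable within your framework: if $\mX_i = \mX_j$ and $N(i) = N(j) = k$, then necessarily $\mF_n(\mX_k) = \mF_n(\mX_i)$, i.e.\ the common neighbor lies in the same group; within a group of size $m$ the choices $N(i)$ are i.i.d.\ uniform over the other $m-1$ members, so the within-group in-degree $B_k$ is ${\rm Binomial}(m-1, \tfrac{1}{m-1})$ and $\ep{B_k(B_k - 1)} = \tfrac{m-2}{m-1} \leq 1$, while the in-degree from points at positive distance is still at most $c_d$ by your angle argument; hence $\ep{D_k(D_k-1)} \leq \ep{(B_k + c_d)^2}$ is bounded by a constant and $\ep{\sum_k D_k(D_k-1)} = O(n)$, preserving $\pr{N(i) = N(j)} = O(1/n)$. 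But as written, your proof does not cover the atomic case, and the claim that ties leave $c_d$ ``unaffected'' is false there.
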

\begin{proof}
Let us prove the first limit. Expanding the expression $\pr{i \rightarrow j \cdl \mX_{N(i)}, \mX_i, \mX_i}$ as 
$$
\pr{i \rightarrow j, \mX_i = \mX_j \cdl \mX_{N(i)}, \mX_i, \mX_j} + \pr{i \rightarrow j, \mX_i \neq \mX_j \cdl \mX_{N(i)}, \mX_i, \mX_j},
$$
we seek to bound the two terms of the sum. If $\pr{\mX_i = \mX_j} = 0$, then there is nothing to be shown for the first term, so suppose that $\pr{\mX_i = \mX_j} > 0$. Observe that the events, indexed by $k$, $\{\mX_k = \mX_i\}$ are mutually independent, conditional on $\mX_{N(i)}, \mX_i,$ and $ \mX_j$. Defining $A := \cp{\mX_n = \mX_i \text{ for infinitely many } n} $, the second Borel-Cantelli lemma gives
$$
\pr{A \, \cdl \mX_i, \mX_{N(i)}, \mX_j, \mX_i = \mX_j} = 1.
$$
Therefore, 
$$
\pr{i \rightarrow j\cdl  A,  \mX_i, \mX_{N(i)}, \mX_j, \mX_i = \mX_j} = \frac{1}{\ap{\{k : 1 \leq k \leq n, \mX_k = \mX_i\}}} \las 0,
$$
which demonstrates that the first term in the sum goes to $0$. 

Denote the events $G := \cp{\lim_{n\rightarrow \infty} \|\mF_n - \mF\|_\infty = 0}$ and $L := \cp{\lim_{n\rightarrow \infty} \mX_{N(i)} = \mX_i}$.
For the second term,
\al{
\pr{i \rightarrow j, \mX_i \neq \mX_j \cdl \mX_{N(i)}, \mX_i, \mX_j} &\leq \pr{i \rightarrow j \cdl \mX_i \neq \mX_j,  \mX_{N(i)}, \mX_i, \mX_j}.
}
By Lemma \ref{cons_cdl_gc}, the previous display is 
$$
\pr{i \rightarrow j, G, L\cdl \mX_i \neq \mX_j,  \mX_{N(i)}, \mX_i, \mX_j} + o_{\P}(1),
$$
which is bounded above by
$$
 \pr{i \rightarrow j, \lim_{n\rightarrow\infty} \nm{\mF_n(\mX_i) - \mF_n(\mX_{N(i)})} = 0 \cdl \mX_i \neq \mX_j,  \mX_{N(i)}, \mX_i, \mX_j} + o_{\P}(1).
$$
Since $\mX_i \neq \mX_j$, $i \not\rightarrow j$ for large enough $n$, the term above goes to $0$ in probability. 
To handle the second limit, we perform  a similar decomposition to obtain
$$
\pr{i \rightarrow N(j), \mX_i = \mX_j \cdl \mX_{N(i)}, \mX_{N(j)}, \mX_i, \mX_j} + \pr{i \rightarrow N(j), \mX_i \neq \mX_j \cdl \mX_{N(i)}, \mX_{N(j)}, \mX_i, \mX_j}.
$$
The first term is handled by observing $\mX_i = \mX_j$ implies $\mX_{N(i)} = \mX_{N(j)} = \mX_i = \mX_j$, then proceeding identically to the first limit.

Let $M := \{\lim_{n\rightarrow\infty} \mX_{N(i)} = \mX_i, \lim_{n\rightarrow\infty} \mX_{N(j)} = \mX_j\}$ so that the desired conditional probability $\pr{i \rightarrow N(j) | \mX_i \neq \mX_j, \mX_{N(i)}, \mX_{N(j)}, \mX_i, \mX_j} $ is
$$
 \pr{i \rightarrow N(j)\cdl G, M, \mX_i \neq \mX_j, \mX_{N(i)}, \mX_{N(j)}, \mX_i, \mX_j} + o_{\P}(1).
$$

Proceeding analogously, an upper bound of the previous display is
$$
\pr{i \rightarrow N(j), \lim_{n\rightarrow\infty}\nm{\mF_n(\mX_i) - \mF_n(\mX_{N(i)})} = 0 \, | \mX_i \neq \mX_j, \mX_{N(i)}, \mX_{N(j)}, \mX_i, \mX_j} + o_{\P}(1).
$$

By Lemma \ref{cons_cdl_gc} and for an $\epsilon >0$ small enough,
$$
\pr{\nm{\mF_n(\mX_i) - \mF_n(\mX_{N(j)})} > \epsilon\, \cdl \mX_i \neq \mX_j, \mX_{N(i)}, \mX_{N(j)}, \mX_i, \mX_j} \overset{\P}{\rightarrow} 1,
$$
from which we conclude that the original term is $o_{\P}(1)$.
\end{proof}
\begin{lemma}\label{cons_plim_yni}
If $i \neq j$, 
\al{
\pr{Y_j \leq Y_i \wedge Y_{N(i)}|\mX_{N(i)}, \mX_i, \mX_j} &\overset{\P}{\rightarrow} \pr{Y_j \leq Y_i \wedge Y_i'| \mX_i, \mX_j};
}
If $i \neq j \neq k \neq l$,
\al{
\pr{Y_k \leq Y_i \wedge Y_{N(i)}, Y_l \leq  Y_j \wedge Y_{N(j)}|\mX_{N(i)}, \mX_{N(j)}, \mX_i, \mX_j, \mX_k, \mX_l}&\overset{\P}{\rightarrow} \pr{Y_k \leq Y_i \wedge Y'_i| \mX_i, \mX_k}\\
&\,\cdot\pr{Y_l \leq Y_j \wedge Y'_j| \mX_j, \mX_l},
}
where $Y_i'$ is independently sampled from $Y| \mX = \mX_i'$.
\end{lemma}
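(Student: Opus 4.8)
The plan is to prove both limits by the same three-move strategy: use Lemma~\ref{cons_Ni_distinct} to restrict to the event on which all indices appearing are distinct; on that event use the conditional-independence structure of Lemma~\ref{cons_indgivenxni} to rewrite each conditional probability as a \emph{deterministic} functional of the conditioning points; and finally transport the nearest-neighbour argument of that functional to its base point using $\mX_{N(i)}\las\mX_i$ (Lemma~\ref{cons_xni_las_xi}). Write $g(y\mid\mx):=\pr{Y\geq y\cdl \mX=\mx}$ for the conditional survival function, $\mu_{\mx}$ for the law of $Y$ given $\mX=\mx$, and set
\[
\Psi(\mx_1,\mx_2,\mx_3):=\int g(y\mid\mx_1)\,g(y\mid\mx_3)\,\d\mu_{\mx_2}(y),
\]
the probability that a draw from $\mu_{\mx_2}$ lies at or below independent draws from $\mu_{\mx_1}$ and $\mu_{\mx_3}$. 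This $\Psi$ is bounded and measurable, but in general not continuous.

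For the first limit, Lemma~\ref{cons_Ni_distinct} gives $\pr{i\rightarrow j\cdl \mX_{N(i)},\mX_i,\mX_j}=o_{\P}(1)$, so the event $\{N(i)=j\}$ contributes negligibly and I may treat $i,j,N(i)$ as distinct. On that event Lemma~\ref{cons_indgivenxni} identifies the conditional law of $(Y_i,Y_j,Y_{N(i)})$ given $(\mX_i,\mX_j,\mX_{N(i)})$ with that of three conditionally independent draws, so the left-hand side equals $\Psi(\mX_i,\mX_j,\mX_{N(i)})+o_{\P}(1)$. The target on the right is $\Psi(\mX_i,\mX_j,\mX_i)=\int g(y\mid\mX_i)^2\,\d\mu_{\mX_j}(y)=\pr{Y_j\leq Y_i\wedge Y_i'\cdl \mX_i,\mX_j}$, with $Y_i'$ a conditionally independent copy of $Y_i$. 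Everything therefore reduces to
\[
\Psi(\mX_i,\mX_j,\mX_{N(i)})-\Psi(\mX_i,\mX_j,\mX_i)\overset{\P}{\rightarrow}0.
\]

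This replacement of $\mX_{N(i)}$ by $\mX_i$ inside $\Psi$ is the crux and, I expect, the main obstacle: $\Psi$ is only measurable, so with no continuity of $\mx\mapsto g(\cdot\mid\mx)$ available the continuous-mapping theorem fails despite $\mX_{N(i)}\las\mX_i$. The route through is Lemma~\ref{cons_arbmeas}. Using $0\leq g\leq 1$,
\[
\ap{\Psi(\mX_i,\mX_j,\mX_{N(i)})-\Psi(\mX_i,\mX_j,\mX_i)}\leq \int\ap{g(y\mid\mX_{N(i)})-g(y\mid\mX_i)}\,\d\mu_{\mX_j}(y).
\]
For each fixed $y$ the map $\mx\mapsto g(y\mid\mx)$ is measurable, so Lemma~\ref{cons_arbmeas} gives $g(y\mid\mX_{N(i)})-g(y\mid\mX_i)\overset{\P}{\rightarrow}0$; taking expectations of the bounded right-hand side and invoking Tonelli together with dominated convergence — using that $\mX_{N(i)}$ and $\mX_i$ share the same marginal law, as shown inside the proof of Lemma~\ref{cons_indgivenxni} — forces the bound to $0$ in $L^1$, hence in probability. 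The delicate point is precisely this interchange of the limit with integration against the \emph{random} reference measure $\mu_{\mX_j}$, along with the conditional-independence bookkeeping that lets $Y_j$ be treated as independent of $(\mX_i,\mX_{N(i)})$ on $\{N(i)\neq j\}$.

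For the second limit I would run the same argument with more indices. Lemma~\ref{cons_Ni_distinct} and its relabellings — in particular $\pr{i\rightarrow N(j)\cdl\cdots}=o_{\P}(1)$, which rules out $N(i)=N(j)$ — restrict to the event on which $i,j,k,l,N(i),N(j)$ are all distinct. There, the multi-index form of Lemma~\ref{cons_indgivenxni} makes $Y_i,Y_j,Y_k,Y_l,Y_{N(i)},Y_{N(j)}$ conditionally independent given their $\mX$'s; since $\{Y_k\leq Y_i\wedge Y_{N(i)}\}$ and $\{Y_l\leq Y_j\wedge Y_{N(j)}\}$ involve disjoint blocks, the conditional probability factorizes as $\Psi(\mX_i,\mX_k,\mX_{N(i)})\,\Psi(\mX_j,\mX_l,\mX_{N(j)})+o_{\P}(1)$. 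Applying the replacement of the previous paragraph to each factor sends this to $\Psi(\mX_i,\mX_k,\mX_i)\,\Psi(\mX_j,\mX_l,\mX_j)$, which is exactly the claimed product $\pr{Y_k\leq Y_i\wedge Y_i'\cdl\mX_i,\mX_k}\,\pr{Y_l\leq Y_j\wedge Y_j'\cdl\mX_j,\mX_l}$.
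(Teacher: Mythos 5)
Your skeleton matches the paper's: use Lemma~\ref{cons_indgivenxni} to identify the conditional probability with a measurable function of the conditioning points evaluated at independent copies, use Lemma~\ref{cons_Ni_distinct} to rule out index coincidences, and use Lemma~\ref{cons_arbmeas} to move $\mX_{N(i)}$ to $\mX_i$. The difference is in how the last step is executed: the paper applies Lemma~\ref{cons_arbmeas} abstractly to the function $f(\mx,\mx_i,\mx_j)=\pr{Y_j \leq Y_i \wedge Y_i' \cdl \mX_i'=\mx,\, \mX_i=\mx_i,\,\mX_j=\mx_j}$ itself, replacing only its first argument, and never unpacks $f$ as an integral; you instead fix $y$, apply Lemma~\ref{cons_arbmeas} to $\mx\mapsto g(y\mid\mx)$, and then try to integrate over $y$ against the random measure $\mu_{\mX_j}$.

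That interchange is where your proof has a genuine gap. Write $\Delta_n(y):=|g(y\mid\mX_{N(i)})-g(y\mid\mX_i)|$. Since the conditional law of $Y_j$ given all the $\mX$'s and the tie-breaking variables is $\mu_{\mX_j}$, Tonelli gives $\E\bp{\int \Delta_n(y)\,\d\mu_{\mX_j}(y)} = \E\bp{\Delta_n(Y_j)}$, so what you actually need is convergence of $\Delta_n$ evaluated at the \emph{dependent} random point $Y_j$: the index $N(i)$ is computed from all of $\mX_1,\dots,\mX_n$, including $\mX_j$, so $\Delta_n$ and $\mu_{\mX_j}$ are not independent. Pointwise-in-$y$ convergence in probability plus boundedness does not survive such evaluation: take $W\sim{\rm Uniform}[0,1]$ and $\Delta_n(y)=\ind(|y-W|<1/n)$; then $\Delta_n(y)\overset{\P}{\rightarrow}0$ for every fixed $y$, yet $\int \Delta_n\,\d\delta_W=\Delta_n(W)\equiv 1$. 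Neither of the facts you invoke closes this: equality of the marginal laws of $\mX_{N(i)}$ and $\mX_i$ says nothing about their joint dependence with $\mX_j$, and ``independence of $Y_j$ from $(\mX_i,\mX_{N(i)})$ on the event $\{N(i)\neq j\}$'' is not a usable notion, since the event $\{N(i)\neq j\}$ is itself a function of $\mX_j$ and conditioning on it re-introduces dependence. The repair is a decoupling device your write-up stops short of: let $N^{(-j)}(i)$ be the nearest neighbour of $i$ computed from $\{\mX_k: k\neq i,j\}$; then $N(i)=N^{(-j)}(i)$ unless $N(i)=j$, an event of probability $1/(n-1)\rightarrow 0$, and $(\mX_i,\mX_{N^{(-j)}(i)})$ is genuinely independent of $(\mX_j,Y_j)$, so conditioning on $\mX_j$ makes your Tonelli/dominated-convergence step valid (applying Lemma~\ref{cons_arbmeas} to the $(n-1)$-point sample). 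Alternatively, follow the paper and apply Lemma~\ref{cons_arbmeas} in the multi-argument form directly to the joint measurable function, never fixing $y$. With either repair, the rest of your argument, including the factorization for the second limit, goes through.
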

\begin{proof}

By Lemma \ref{cons_indgivenxni},
$$
\pr{Y_j \leq Y_i \wedge Y_{N(i)} \cdl \mX_{N(i)} = \mx, \mX_i = \mx_i, \mX_j = \mx_j} = \pr{Y_j \leq Y_i \wedge Y_i' \cdl \mX_i' = \mx, \mX_i = \mx_i, \mX_j = \mx_j}
$$
where $(\mX_i', Y_i')$ is an independent and identically distributed copy of the data. Then, the function $f : (\mx, \mx_i, \mx_j) \mapsto \pr{Y_j \leq Y_i \wedge Y_i' \cdl \mX_i' = \mx, \mX_i = \mx_i, \mX_j=\mx_j}$ is measurable by the existence of the regular conditional probability. Applying Lemma \ref{cons_arbmeas} yields $f(\mX_{N(i)}, \mX_i, \mX_j) \overset{\P}{\rightarrow} f(\mX_i, \mX_i, \mX_j)$, from which we conclude the first limit.

We proceed in an analogous way for the remaining limit. First, we define the functions 
\al{
g&: (\mx_i', \mx_j', \mx_i, \mx_j, \mx_k, \mx_l) \mapsto\\ 
&\pr{Y_k \leq Y_i \wedge Y_{N(i)}, Y_l \leq  Y_j \wedge Y_{N(j)}\cdl \mX_{N(i)} = \mx_i', \mX_{N(j)} = \mx_j', \mX_i = \mx_i, \mX_j = \mx_j, \mX_k = \mx_k, \mX_l = \mx_l}.
}

Then, by Lemmas \ref{cons_indgivenxni} and \ref{cons_Ni_distinct},
\al{
g& (\mx_i', \mx_j', \mx_i, \mx_j, \mx_k, \mx_l) = \\ 
&\pr{Y_k \leq Y_i \wedge Y_i', Y_l \leq  Y_j \wedge Y_j' \cdl \mX_{i}' = \mx_i', \mX_{j}' = \mx_j', \mX_i = \mx_i, \mX_j = \mx_j, \mX_k = \mx_k, \mX_l = \mx_l} + o_{\P}(1),
}
where $(Y_i', Y_j') | (\mX_i', \mX_j') = \mx$ has the same distribution as $(Y_{N(i)}, Y_{N(j)}) | (\mX_{N(i)}, \mX_{N(j)}) = \mx$. Applying Lemma \ref{cons_arbmeas}, 
\al{
g(\mX_{N(i)}, \mX_{N(j)}, \mX_i, \mX_j, \mX_k, \mX_l) &\overset{\P}{\rightarrow} g(\mX_i, \mX_j, \mX_i, \mX_j, \mX_k, \mX_l). 
}
Using the independence of the data, the second limit is proven, concluding the proof.
\end{proof}

\begin{lemma}\label{cons_var0}
As $n\rightarrow \infty$, $\var{Q_n} \rightarrow 0$.
\end{lemma}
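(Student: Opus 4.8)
The plan is to deduce $\var{Q_n}\to 0$ from the already-established convergence $\ep{Q_n}\to Q$ (Lemma \ref{eqq}) by proving that the second moment $\ep{Q_n^2}$ converges to $Q^2$; then $\var{Q_n}=\ep{Q_n^2}-(\ep{Q_n})^2\to Q^2-Q^2=0$. Write $Q_n=\frac1n\sum_{i=1}^nT_i$ with $T_i:=F_n(Y_i)\wedge F_n(Y_{N(i)})-G_n(Y_i)^2$. Each $T_i$ is bounded (it lies in $[-1,1]$ since $F_n,G_n\in[0,1]$), so $\ep{Q_n^2}=\frac1{n^2}\sum_{i,j}\ep{T_iT_j}$, and the $n$ diagonal terms contribute $\frac1{n^2}\cdot n\cdot O(1)=O(1/n)\to 0$. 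By exchangeability of the i.i.d. sample (the tie-breaking draws $U_i$ are themselves i.i.d. and independent of the data), $\ep{T_iT_j}$ depends only on whether $i=j$, so the off-diagonal average equals $\tfrac{n-1}{n}\ep{T_1T_2}$. It therefore remains to prove the single limit $\ep{T_1T_2}\to Q^2$.

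To expand $\ep{T_1T_2}$, I would use the monotonicity of $F_n$ to write $F_n(Y_i)\wedge F_n(Y_{N(i)})=\frac1n\sum_{k}\ind(Y_k\le Y_i\wedge Y_{N(i)})$ and $G_n(Y_i)^2=\frac1{n^2}\sum_{k,l}\ind(Y_k\ge Y_i,\,Y_l\ge Y_i)$, which turns $T_1T_2$ into a normalized count over tuples of indices. I would then partition these tuples by coincidence pattern: tuples in which some index is repeated or coincides with one of $1,2,N(1),N(2)$ number an $O(1/n)$ fraction of the total and, the summands being bounded by $1$, contribute $o(1)$; the dominant mass sits on tuples with pairwise distinct indices.

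For a distinct-index tuple I would condition on the finite collection of $\mX$'s involved and split the configuration into an ``index-$1$ block'' around $(\mX_1,\mX_{N(1)})$ and an ``index-$2$ block'' around $(\mX_2,\mX_{N(2)})$. Lemma \ref{cons_Ni_distinct} guarantees the two nearest-neighbor links do not cross between the blocks with probability tending to one, while Lemma \ref{cons_cdl_gc} (Glivenko--Cantelli) lets me replace the shared empirical objects $F_n,G_n$ by their population counterparts $F,G$ in the limit. The second limit of Lemma \ref{cons_plim_yni} then factors the joint conditional probability of the $Y$-events into the product of the two blocks' conditional probabilities, while the first limit of Lemma \ref{cons_plim_yni} together with Lemma \ref{cons_indgivenxni} identifies each factor's limit; since all summands are bounded by $1$, bounded convergence upgrades these conditional limits to an unconditional one. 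Concretely this yields
\[
\ep{T_1T_2}\longrightarrow \ep{\big(F(Y_1\wedge Y_1')-G(Y_1)^2\big)\big(F(Y_2\wedge Y_2')-G(Y_2)^2\big)},
\]
where $(\mX_1,Y_1,Y_1')$ and $(\mX_2,Y_2,Y_2')$ are independent and within each triple the two $Y$'s are conditionally i.i.d. given the corresponding $\mX$. By the independence of the two triples the right-hand side factors as $\big(\ep{F(Y_1\wedge Y_1')-G(Y_1)^2}\big)^2$, and the same computation underlying $\ep{T_1}\to Q$ shows this common factor equals $\int \Var\{\E[\ind(Y\ge y)\given \mX]\}\,\d F(y)=Q$. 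Hence $\ep{T_1T_2}\to Q^2$, which finishes the argument.

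I expect the main obstacle to be the bookkeeping that decouples the two blocks while they share the random empirical measures $F_n,G_n$ and the random neighbor assignments $N(\cdot)$: for finite $n$, $T_1$ and $T_2$ are genuinely dependent both through the common empirical CDFs and through the possibility that the neighbor of one index equals the other index or its neighbor. The lemmas are tailored to dissolve exactly these dependencies in the limit---Lemma \ref{cons_Ni_distinct} kills the crossing links, Lemma \ref{cons_cdl_gc} freezes $F_n,G_n$ at $F,G$, and Lemma \ref{cons_plim_yni} performs the factorization---so the remaining work is to verify that the collision terms are negligible and that conditional convergence may be integrated, both of which rest on the uniform boundedness of the indicator summands.
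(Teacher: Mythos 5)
Your argument is correct and reaches the conclusion through a different decomposition than the paper. The paper applies the law of total variance with respect to $\mathcal{F}_n=\sigma(\mX_1,\ldots,\mX_n)$, showing separately that $\ep{\var{Q_n \mid \mathcal{F}_n}}\to 0$ (by expanding conditional covariances of indicator counts over $n^4$ index tuples, killing all but $O(n^3)$ of them with Lemmas \ref{cons_indgivenxni} and \ref{cons_plim_yni}, and finishing with dominated convergence) and that $\var{\ep{Q_n\mid\mathcal{F}_n}}\to 0$ (by passing to the variables $Z_{i,j}$ and counting dependent pairs). You instead write $\var{Q_n}=\ep{Q_n^2}-(\ep{Q_n})^2$, dispose of the second term with Lemma \ref{eqq}, and reduce the first, via exchangeability, to the single limit $\ep{T_1T_2}\to Q^2$, established by the same tuple-partitioning and the same decoupling lemmas. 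The two routes are mathematically equivalent: since $\var{Q_n}=\frac{n-1}{n}\cov(T_1,T_2)+O(1/n)$, your one limit is exactly the paper's two pieces recombined through the law of total covariance. What yours buys is lighter bookkeeping (no conditioning on $\mathcal{F}_n$, one limit instead of two) at the cost of leaning on Lemma \ref{eqq}, which the paper's proof of this lemma does not need. One caveat, which your write-up shares with (and in fact handles more explicitly than) the paper's own proof: Lemma \ref{cons_plim_yni} as stated factors only the joint probability of two min-type events $\{Y_k\le Y_i\wedge Y_{N(i)}\}$, whereas the cross terms generated by the $G_n(Y_i)^2$ pieces involve mixed events such as $\{Y_k\le Y_1\wedge Y_{N(1)},\,Y_l\ge Y_2,\,Y_m\ge Y_2\}$; these require an analogous factorization that is not literally in the lemma but follows by the identical technique (Lemmas \ref{cons_indgivenxni}, \ref{cons_Ni_distinct}, \ref{cons_arbmeas}), while the purely $G$-type tuples need nothing beyond independence of the $Y_i$'s. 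With that routine supplement, your plan goes through.
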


\begin{proof}

First, we will show $\ep{\var{Q_n|\mX_1, ..., \mX_n}} \rightarrow 0$.
Expanding the variance of the sum,
\al{
\var{Q_n|\mX_1, ..., \mX_n} &= \frac{1}{n^2} \var{\sum_{i=1}^n F_n(Y_i) \wedge F_n(Y_{N(i)})\cdl \mathcal{F}_n}\\
&= \frac{1}{n^2}\sum_{i=1}^n\sum_{j=1}^n \text{Cov}\pa{F_n(Y_i) \wedge F_n(Y_{N(i)}), F_n(Y_j) \wedge F_n(Y_{N(j)}) \cdl \mathcal{F}_n}\\
&= \frac{1}{n^4}\sum_{i=1}^n\sum_{j=1}^n \text{Cov}\pa{\sum_{k=1}^n \ind(Y_k \leq Y_i \wedge Y_{N(i)}), \sum_{l=1}^n \ind(Y_l \leq Y_j \wedge Y_{N(j)}) \cdl \mathcal{F}_n}\\
&= \frac{1}{n^4} \sum_{i,j,k,l=1}^n \text{Cov}\pa{\ind(Y_k \leq Y_{i} \wedge Y_{N(i)}), \ind(Y_l \leq Y_{j} \wedge Y_{N(j)})\cdl \mathcal{F}_n}.
}
Lemmas \ref{cons_indgivenxni} and Lemma \ref{cons_plim_yni} yield the equality, for $i
\neq j\neq k \neq l$,
\al{
\pr{Y_k \leq Y_{i} \wedge Y_{N(i)}, Y_l \leq Y_{j} \wedge Y_{N(j)}\cdl \mathcal{F}_n} \overset{p}{\rightarrow } \pr{Y_k \leq Y_{i} \wedge Y_{N(i)} \cdl \mX_i, \mX_k}\pr{Y_l \leq Y_{j} \wedge Y_{N(j)} \cdl \mX_j, \mX_l}.
}
Equivalently, $\text{Cov}\pa{A_{i,k}, A_{j,l} \cdl \mathcal{F}_n} = o_{\P}(1).$
As a consequence, there are only $O(n^3)$ indices for which 
$$
\text{Cov}\pa{\ind(Y_k \leq Y_{i} \wedge Y_{N(i)}), \ind(Y_l \leq Y_{j} \wedge Y_{N(j)})\cdl \mathcal{F}_n} = o_{\P}(1).
$$
Additionally, since this $o_{\P}(1)$ term is bounded,
$$
\frac{1}{n^4} \sum_{i,j,k,l=1}^n \text{Cov}\pa{\ind(Y_k \leq Y_{i} \wedge Y_{N(i)}), \ind(Y_l \leq Y_{j} \wedge Y_{N(j)})\cdl \mathcal{F}_n} = o_{\P}(1)
$$
and is also bounded. Applying the dominated convergence theorem yields 
\[
\ep{\var{Q_n|\mX_1, ..., \mX_n}} \rightarrow 0.
\]

Next, we will demonstrate $\var{\ep{Q_n |\mX_1, ..., \mX_n}} \rightarrow 0.$ Using the first limit of Lemma \ref{cons_plim_yni},
\al{
\ep{Q_n|\mX_1, ..., \mX_n} &= \frac{1}{n^2}\sum_{i=1}^n \sum_{j=1}^n  \ep{\ind(Y_j \leq Y_{i} \wedge Y_{N(i)})|\mX_1, ..., \mX_n}\\
&= \frac{1}{n^2}\sum_{i=1}^n \sum_{j:j\neq i}  \ep{\ind(Y_j \leq Y_{i} \wedge \tilde Y_{i,j})|\mX_i, \mX_j} + o_{\P}(1)
}
by Lemma \ref{cons_plim_yni}, where $\tilde Y_{i,1}, ..., \tilde Y_{i, n}$ denote independent copies of $Y_i$. It suffices to show the variance of the above display tends to $0$. Letting $Z_{i,j} = \ep{\ind(Y_j \leq Y_{i} \wedge \tilde Y_{i,j}|\mX_i, \mX_j}$,
$$
\var{\ep{Q_n|\mX_1, ..., \mX_n}} = \frac{1}{n^4} \sum_{i,j,k,l: i\neq j, k\neq l } \text{Cov}\pa{Z_{i,j}, Z_{k,l}} + o_{\P}(1) .
$$
Since $Z_{i,j}$ and $Z_{k,l}$ are independent if $i \neq j \neq k\neq l$, there are at most $O(n^3)$ indices for which $\text{Cov}\pa{Z_{i,j}, Z_{k,l}} \neq 0$, from which the conclusion follows.
\end{proof}

For completeness, we state a claim proven in \cite{azadkia2019simple} which calculates the limit of $P_n$. 

\begin{lemma}\label{cons_pnlim}
As $n \rightarrow \infty$, $P_n \las P$.
\end{lemma}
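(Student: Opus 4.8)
The crucial observation is that $P_n$ depends \emph{only} on the responses $\{Y_i\}_{i\in[n]}$ and not at all on the nearest-neighbor graph, so the stochastic graph structure, ties, and conditioning that complicated the earlier lemmas play no role here. The statement therefore reduces to a classical empirical-process argument. Write $G(y) := \pr{Y \geq y}$ for the population survival function and note that $\Var\{\ind(Y \geq y)\} = G(y)\{1 - G(y)\}$, so that $P = \int G(y)\{1 - G(y)\}\,\d F(y) = \ep{G(Y)\{1 - G(Y)\}}$ for $Y \sim F$. The plan is to replace $G_n$ by $G$ inside the average and then invoke the strong law of large numbers.

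First I would apply the Glivenko-Cantelli theorem to the empirical survival function to obtain $\nm{G_n - G}_\infty \las 0$. Second, since the map $t \mapsto t(1-t)$ is $1$-Lipschitz on $[0,1]$ (its derivative $1-2t$ is bounded by $1$ in absolute value there), I can bound, uniformly over the sample,
\[
\left| \frac{1}{n}\sum_{i=1}^n G_n(Y_i)\{1 - G_n(Y_i)\} - \frac{1}{n}\sum_{i=1}^n G(Y_i)\{1 - G(Y_i)\} \right| \leq \nm{G_n - G}_\infty \las 0.
\]
Third, because the summands $G(Y_i)\{1 - G(Y_i)\}$ are i.i.d.\ and bounded in $[0,1/4]$, the strong law of large numbers gives $\frac{1}{n}\sum_{i=1}^n G(Y_i)\{1 - G(Y_i)\} \las \ep{G(Y)\{1 - G(Y)\}} = P$. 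Combining the two displays yields $P_n \las P$.

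There is no genuine obstacle here, which is precisely why the lemma is merely cited from \cite{azadkia2019simple}. The only points requiring care are that both the Glivenko-Cantelli theorem and the strong law hold without any continuity assumption on $F$, so atoms and ties in $Y$ are harmless, and that one correctly identifies $\Var\{\ind(Y \geq y)\}$ with $G(y)\{1 - G(y)\}$ so that the resulting limit matches the stated $P$.
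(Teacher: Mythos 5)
Your proof is correct: the Lipschitz bound for $t\mapsto t(1-t)$ on $[0,1]$, the Glivenko--Cantelli step for the empirical survival function (which indeed needs no continuity of $F$), and the strong law for the bounded i.i.d.\ summands $G(Y_i)\{1-G(Y_i)\}$ together give $P_n\las P$. The paper itself gives no proof---it only cites \citet{azadkia2019simple}---and your argument is essentially the same standard empirical-process reasoning used there, so there is nothing to compare beyond noting that your write-up fills in the omitted details correctly.
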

Combining all our previous results, we arrive at the proof of Theorem \ref{consistency} below.

\begin{proof}[Proof of Theorem \ref{consistency}] 
Notice that $\xi_n = Q_n/P_n$. Consider the intermediate value 
$$
\tilde \xi_n := \frac{Q_n}{\int \var{\ind(Y \geq y)} \, dF(y)}.
$$
By the continuous mapping theorem, $\xi_{n} - \tilde \xi_n \las 0$, so it suffices to show $\tilde \xi_n \overset{\P}{\rightarrow} \xi$. By Lemma \ref{eqq}, $\ep{\xi_n} \rightarrow \xi$, and by Lemma \ref{cons_var0}, $\var{\tilde \xi_n} \rightarrow 0$. We conclude by Chebyshev's inequality that $\xi_n \overset{\P}{\rightarrow} \xi$.
\end{proof}

\subsection{Results on nearest-neighbor graphs with dependent nodes}

In this section, we assume the conditions of Theorem \ref{an}. We consider the directed nearest neighbor graph $\mathcal{G}_n$ associated with the marginal rank vectors of the data, where $i \rightarrow j$ if $N(i) = j$, and $i \leftrightarrow j$ if $N(i) = j$ and $N(j) = i$. Let $T_{i,j} := \ind(i \leftrightarrow j)$ and $C_{i,j,k} := \ind(i \rightarrow k, j \rightarrow k)$, where $1\leq i,j,k \leq n$. This section will develop results necessary to calculate the limiting values of 
$$
\ep{\frac{1}{n}\sum_{i,j,k=1}^n C_{i,j,k}} \text{ and }  
\ep{\frac{1}{n}\sum_{i,j=1}^n T_{i,j}}
$$
as $n\rightarrow \infty$.

First, we may evaluate the limits when $d = 1$.
\begin{lemma}\label{an_calc_d1}
Assume $d = 1$. As $n\rightarrow \infty$,
\[
\ep{\frac{1}{n}\sum_{i,j,k=1}^n C_{i,j,k}} \rightarrow \frac{1}{2}~~~{\rm and}~~~
\ep{\frac{1}{n}\sum_{i,j=1}^n T_{i,j}} \rightarrow \frac{1}{2}
\]
\end{lemma}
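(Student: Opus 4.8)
The plan is to exploit the special geometry that the rank transform induces when $d=1$. Since $\mF_n(\mX_i) = F_{n,1}(X_i)$ and, under the continuity assumption of Theorem \ref{an}, the $X_i$ are almost surely distinct, the map $\mX_i \mapsto \mF_n(\mX_i)$ sends the data onto the equally spaced grid $\{1/n, 2/n, \ldots, n/n\}$, with $F_{n,1}(X_{(i)}) = i/n$ for the $i$-th order statistic. Consequently every pair of rank-consecutive points sits at the \emph{same} minimal distance $1/n$, so the nearest neighbor of each point is one of its two rank-neighbors. First I would record the resulting description of $\mathcal G_n$: it is a random orientation of the path $1-2-\cdots-n$ (vertices indexed by rank), in which each interior vertex $i$ points to $i-1$ or $i+1$ according to an independent fair coin (the tie between the two equidistant neighbors being broken by the independent uniform $U_i$), while vertex $1$ points deterministically to $2$ and vertex $n$ to $n-1$. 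Writing $R_i := \ind(i \rightarrow i+1)$ and $L_i := \ind(i \rightarrow i-1)$, the variables $\{R_i\}_{2 \le i \le n-1}$ are i.i.d.\ $\mathrm{Bernoulli}(1/2)$ with $L_i = 1 - R_i$, and $R_1 = L_n = 1$.

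For the second limit, I would note that $i \leftrightarrow j$ can occur only for rank-adjacent pairs, so $\sum_{i,j} T_{i,j} = 2\,\mathrm{MNN}$, where $\mathrm{MNN}$ counts unordered mutual-nearest-neighbor pairs $\{i,i+1\}$. Using independence of the orientations, $\pr{i \leftrightarrow i+1} = \pr{R_i = 1}\pr{L_{i+1}=1} = 1/4$ for an interior pair $2 \le i \le n-2$, while the two boundary pairs $\{1,2\}$ and $\{n-1,n\}$ each have probability $1/2$ (one orientation being forced). Summing gives $\ep{\mathrm{MNN}} = (n+1)/4$, hence $\tfrac1n \ep{\sum_{i,j} T_{i,j}} = (n+1)/(2n) \to 1/2$.

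For the first limit, the key identity is $\sum_{i,j,k} C_{i,j,k} = \sum_{i,j} \ind(N(i) = N(j))$; since every vertex has out-degree one, the relevant contribution over distinct indices equals $\sum_{k} \iota_k(\iota_k - 1) = 2\,\#\{k : \iota_k = 2\}$, where $\iota_k$ denotes the in-degree of $k$. A vertex $k$ has in-degree two exactly when both its rank-neighbors point to it, i.e.\ $R_{k-1} = 1$ and $L_{k+1} = 1$; this has probability $1/4$ for an interior vertex $3 \le k \le n-2$, probability $1/2$ for $k \in \{2, n-1\}$ (where one incoming edge is forced from a boundary vertex), and is impossible for $k \in \{1,n\}$. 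Summing yields $\ep{\#\{k:\iota_k = 2\}} = (n-4)/4 + 1 = n/4$, so that $\tfrac1n \cdot 2 \cdot \tfrac{n}{4} \to 1/2$, as claimed.

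The only genuinely substantive step is the first: recognizing that in rank space all consecutive gaps collapse to the common value $1/n$, which is precisely the source of the ubiquitous ties flagged in the remark following Theorem \ref{an} and which turns $\mathcal G_n$ into a randomly oriented path rather than the (tie-free) nearest-neighbor graph of the raw data. Once this reduction is in place the remaining work is elementary bookkeeping; the only care needed is tracking the $O(1)$ contributions of the boundary vertices $1,2,n-1,n$, which perturb the exact expectations but wash out under the $1/n$ normalization, leaving the common limit $1/2$ in both cases.
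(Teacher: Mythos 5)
Your proof is correct and takes essentially the same route as the paper's: reduce the $d=1$ rank-based NNG to a path whose interior vertices point left or right according to independent fair coins (the ties at distance $1/n$ being broken by the $U_i$), compute the probabilities $1/4$ for interior configurations and $1/2$ for those involving the forced boundary edges, and normalize by $n$. The only discrepancy is an immaterial $O(1)$ bookkeeping term in the mutual-nearest-neighbor count---your exact value $\tfrac{2}{n}\bigl(1+\tfrac{n-3}{4}\bigr)$ versus the paper's $\tfrac{2}{n}\bigl(1+\tfrac{n-2}{4}\bigr)$, with yours being the precise count of the $n-3$ interior adjacent pairs---and both expressions converge to the claimed limit $1/2$.
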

\begin{proof}

Say $n > 3$. Since $d = 1$, we have $C_{i,j,k} = 0$ unless $1 \leq i,j,k \leq n$ are in consecutive order. Assuming that $i < k < j$ see that $\pr{i \rightarrow k, j\rightarrow k} = \frac{1}{4}$ if $i\neq 1$ and $ j\neq n$ and $\pr{i \rightarrow k, j \rightarrow k} = \frac{1}{2}$ if either $i = 1$ or $j = n$. Since we may interchange $i$ and $j$, 
$$
\ep{\frac{1}{n} \sum_{i,j,k = 1}^n C_{i,j,k}} = \frac{2}{n}\pa{1 + \frac{n-4}{4}} \rightarrow \frac{1}{2}.
$$

Furthermore, $\pr{i \leftrightarrow j} = \frac{1}{2}$ if either $i = 1$ or $j=n$, and is $\frac{1}{4}$ otherwise. A similar calculation reveals that 
$$
\ep{\frac{1}{n}\sum_{i,j=1}^n T_{i,j}} = \frac{2}{n}\pa{1 + \frac{n-2}{4}} \rightarrow \frac{1}{2},
$$
concluding the proof.
\end{proof}
For the remainder of this section, we assume that $d > 2$.  For the following proofs, define the ball $\cB_n := \cB(\mF(\mX_1), n^{-1/d})$, its slight enlargement $\cB_n^\delta := \cB(\mF(\mX_1), n^{-1/d} + \delta)$, and the corresponding products $\cW_n := \cB_n \times (\R^d \setminus \cB_n)^{n-1}$ and $\cW_n^\delta := \cB_n^{\delta} \times (\R^d \setminus \cB_n^{\delta})^{n-1}$. Analogously, define the re-centered ball $\hat \cB_n := \cB(\mF_n(\mX_1), n^{-1/d})$ and $\hat \cW_n := \hat \cB_n \times (\R^d \setminus \hat \cB_n)^{n-1}$.

\begin{lemma}\label{an_depequiv}
 As $n \rightarrow \infty$, 
\[
\frac{\pr{(\mF_n(\mX_1), ..., \mF_n(\mX_n)) \in \hat \cW_n\cdl \mX_1}}{\pr{(\mF(\mX_1), ..., \mF(\mX_n))\in  \cW_n\cdl \mX_1}} \las 1.
\]
\end{lemma}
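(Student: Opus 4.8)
The plan is to reduce both conditional probabilities to products over the $n-1$ points $\mX_2,\dots,\mX_n$ and then replace the random, $\mF_n$-dependent numerator by a deterministic Euclidean-ball probability through a Dvoretzky--Kiefer--Wolfowitz (DKW) sandwich. Conditionally on $\mX_1$, the center $\mF(\mX_1)$ of $\cB_n$ is fixed and the events $\{\mF(\mX_j)\notin\cB_n\}$, $j\ge 2$, are i.i.d., so the denominator equals $(1-p_n)^{n-1}$ with $p_n:=\pr{\mF(\mX_2)\in\cB_n\cdl\mX_1}$; note that $\mF(\mX_1)\in\cB_n$ trivially, so the first coordinate contributes nothing. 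Writing $\mathrm{Num}$ for the numerator $\pr{(\mF_n(\mX_1),\dots,\mF_n(\mX_n))\in\hat\cW_n\cdl\mX_1}$, the difficulty is that $\mF_n$ and the recentered random ball $\hat\cB_n=\cB(\mF_n(\mX_1),n^{-1/d})$ couple all of $\mX_2,\dots,\mX_n$ together, so the corresponding events are far from independent.

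To decouple, I would introduce the good event $E_n:=\{\|\mF_n-\mF\|_\infty\le\delta_n\}$ with $\delta_n\asymp\sqrt{\log n/n}$, obtained from a coordinatewise DKW bound and a union over the $d$ marginals, so that $\pr{E_n^c\cdl\mX_1}$ is exponentially small. On $E_n$ the triangle inequality gives $\big|\,\nm{\mF_n(\mX_j)-\mF_n(\mX_1)}-\nm{\mF(\mX_j)-\mF(\mX_1)}\,\big|\le 2\delta_n$ for every $j$, so membership in $\hat\cB_n$ is squeezed between membership in the enlarged ball $\cB_n^{2\delta_n}$ and membership in the shrunk ball of radius $n^{-1/d}-2\delta_n$. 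Splitting $\mathrm{Num}$ over $E_n$ and $E_n^c$ and bounding the latter contribution by $\pr{E_n^c\cdl\mX_1}$ yields
\[
(1-p_n^{+})^{n-1}-\pr{E_n^c\cdl\mX_1}\ \le\ \mathrm{Num}\ \le\ (1-p_n^{-})^{n-1}+\pr{E_n^c\cdl\mX_1},
\]
where $p_n^{\pm}:=\pr{\mF(\mX_2)\in\cB(\mF(\mX_1),n^{-1/d}\pm2\delta_n)\cdl\mX_1}$.

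Dividing by the denominator $(1-p_n)^{n-1}$, which converges to the positive constant $\exp\{-\lambda_d(\cB(\bm 0,1))f(\mF(\mX_1))\}$, annihilates the $\pr{E_n^c\cdl\mX_1}$ remainders since they are exponentially small against a constant-order denominator. It then remains to show $[(1-p_n^{\pm})/(1-p_n)]^{n-1}\to1$, i.e., after taking logarithms and using $p_n,p_n^{\pm}\to0$, that $(n-1)(p_n^{\pm}-p_n)\to0$. By continuity of the copula density $f$ at $\mF(\mX_1)$ one has $p_n=\lambda_d(\cB(\bm 0,1))f(\mF(\mX_1))\,n^{-1}(1+o(1))$, and $p_n^{\pm}-p_n$ reduces to the mass of a spherical shell of thickness $2\delta_n$ around a ball of radius $n^{-1/d}$, which is of order $n^{-(d-1)/d}\delta_n$; multiplying by $n$ leaves a term of order $n^{1/d}\delta_n\asymp\sqrt{\log n}\,n^{1/d-1/2}$.

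The main obstacle, and the precise reason the hypothesis $d>2$ is needed, is exactly this shell estimate: the DKW fluctuation $\delta_n\asymp\sqrt{\log n/n}$ must be of smaller order than the ball radius $n^{-1/d}$, i.e., $\sqrt{\log n}\,n^{1/d-1/2}\to0$, which holds if and only if $d>2$ (for $d=2$ the two rates coincide, producing the singularity noted after Theorem \ref{an}). Beyond this quantitative point, the delicate step is making the decoupling rigorous: the indicator $\ind(E_n)$ is entangled with the very points $\mX_2,\dots,\mX_n$ being integrated out in $\mathrm{Num}$, so one must detach it at exponentially small cost, and the continuity of $f$ has to be used uniformly enough (for a.e.\ $\mX_1$) to control the $(1+o(1))$ factors simultaneously across the enlarged and shrunk balls.
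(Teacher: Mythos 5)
Your proposal follows essentially the same route as the paper's proof: the same DKW good event with balls inflated and deflated by $2\delta_n$, the same reduction of both numerator and denominator to conditionally i.i.d.\ products given $\mX_1$, and the same shell-volume estimate via continuity of $f$ that forces $\delta_n = o(n^{-1/d})$ and hence $d>2$ (the paper takes $\delta_n = n^{-1/2+\epsilon}$ rather than $\sqrt{\log n/n}$, which changes nothing essential). The one detail the paper adds that you should too is a leave-one-out step: conditional on $\mX_1$, the empirical CDF $\mF_n$ is not that of an i.i.d.\ sample, so the paper introduces $\mF_n^-$ (built from $\mX_2,\dots,\mX_n$, independent of $\mX_1$), applies DKW to it, and absorbs the deterministic $O(n^{-1})$ gap $\|\mF_n-\mF_n^-\|_\infty$ into $\delta_n$; your conditional bound on $\pr{E_n^c \cdl \mX_1}$ needs that same one-line fix.
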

\begin{proof}
Set $\delta_n = n^{-\frac{1}{2} + \epsilon}$ where $0 < \epsilon < \frac{1}{2} - \frac{1}{d}$. Let $\mF_n^{-}(\mx) := \pa{\frac{1}{n-1} \sum_{k=2}^n \ind( x_{j} \geq X_{k,j})}_{1 \leq j \leq d}$.
The triangle inequality and a union bound give
\al{
\pr{\|\mF_n - \mF\|_\infty > 2\delta_n\cdl \mX_1} &\leq  \pr{\|\mF_n - \mF_n^-\|_\infty > \delta_n\cdl \mX_1} + \pr{\|\mF_n^- - \mF\|_\infty > \delta_n}.
}
By definition, $\|\mF_n - \mF_n^-\|_\infty = O(n^{-1})$, so the first term goes to $0$ as $n$ tends to $\infty$. 

By the Dvoretsky-Kiefer-Wolfowitz inequality, \citep{dvoretzky1956asymptotic} and our choice of $\delta_n$, the second term tends to $0$ as $n$ approaches infinity.

Next, the triangle inequality gives
\al{
\pr{(\mF_n(\mX_1), ..., \mF_n(\mX_n)) \in \hat \cW_n, \|\mF_n - \mF\|_\infty \leq \delta_n\cdl \mX_1} &\leq \pr{(\mF(\mX_1), ..., \mF(\mX_n))\in \cW_n^{-2\delta_n}|\mX_1}\\
&=  \pr{\mF(\mX_2) \not \in  \cB_n^{-2\delta_n}\cdl \mX_1}^{n-1}.
}
Notice
$$
\frac{\pr{\mF(\mX_2) \not\in \cB_n^{-2\delta_n}\cdl \mX_1}}{\pr{\mF(\mX_2) \not\in \cB_n\cdl \mX_1}} = \pa{1 + \frac{\pr{\mF(\mX_2) \in \cB_n \setminus \cB_n^{-2\delta_n}\cdl \mX_1}}{1 - \pr{\mF(\mX_2) \in \cB_n\cdl \mX_1}}},
$$
and $1 - \pr{\mF(\mX_2) \in \cB_n\cdl \mX_1}$ tends to $1$ as $n$ approaches $\infty$ because $\cB_n$ decreases to the set $\{\mX_1\}$ as $n \rightarrow \infty$, and the probability $\mF(\mX_2)$ lies in this set is $0$, by the continuity of $\mF(\mX)$. A further calculation reveals that
$$
\pr{\mF(\mX_2) \in \cB(\mF(\mX_1), n^{-1/d})\setminus \cB_n^{-2\delta_n }\cdl \mX_1} = O(n^{-1} - (n^{-1/d} - 2\delta_n)^d)= o\pa{n^{-1}}
$$
using the continuity of $f$. From this,
$$
\pa{\frac{\pr{\mF(\mX_2) \not\in \cB_n^{-2\delta_n}\cdl \mX_1}}{\pr{\mF(\mX_2) \not\in \cB_n\cdl \mX_1})}}^{n-1} \las 1
$$
so that
$$
\limsup_{n\rightarrow \infty}\frac{\pr{(\mF_n(\mX_1), ..., \mF_n(\mX_n)) \in \hat \cW_n\cdl \mX_1}}{\pr{(\mF(\mX_1), ..., \mF(\mX_n))\in  \cW_n\cdl \mX_1}} \leq  1.
$$

Analogously,
$$
\pa{\frac{\pr{\mF(\mX_2) \not\in \cB_n^{2\delta_n}\cdl \mX_1}}{\pr{\mF(\mX_2) \not\in \cB_n\cdl \mX_1})}}^{n-1} \las 1
$$
so that
$$
\liminf_{n\rightarrow \infty}\frac{\pr{(\mF_n(\mX_1), ..., \mF_n(\mX_n)) \in \hat \cW_n\cdl \mX_1}}{\pr{(\mF(\mX_1), ..., \mF(\mX_n))\in  \cW_n\cdl \mX_1}} \geq  1.
$$
Putting these two bounds together, we obtain 
$$
\frac{\pr{(\mF_n(\mX_1), ..., \mF_n(\mX_n)) \in \hat \cW_n\cdl \mX_1}}{\pr{(\mF(\mX_1), ..., \mF(\mX_n))\in  \cW_n\cdl \mX_1}} \las 1
$$
as desired.
\end{proof}

Next, we define the nearest-neighbor distances

$$
\hat D_n := \nm{\mF_n(\mX_1) - \mF_n(\mX_{N(1)})}~~~{\rm and}~~~
D_n := \nm{\mF(\mX_1) - \mF(\mX_{\check N(1)})},
$$
where $\check N(1)$ is the index of the nearest neighbor according to the Euclidean metric, to $\mF(\mX_1)$, among $\mF(\mX_2), ..., \mF(\mX_n)$, with ties broken uniformly at random. 

The main consequence of Lemma \ref{an_depequiv} is that the two distances possess the same limiting law when scaled by $\sqrt{n}$.
\begin{corollary}\label{an_hatdn_tight}
     Conditional on $\mX_1$, $n\hat D_n^d$ and $n D_n^d$ have the same limiting exponential law.
\end{corollary}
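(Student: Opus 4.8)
The plan is to read Lemma~\ref{an_depequiv} as a statement about conditional survival functions and then transfer the (elementary) i.i.d.\ limit for $D_n$ onto $\hat D_n$. Observe first that, by the product structure of the sets, the event $\{(\mF_n(\mX_1),\dots,\mF_n(\mX_n))\in\hat\cW_n\}$ says precisely that no $\mF_n(\mX_j)$, $j\ge 2$, lies in the ball of radius $n^{-1/d}$ about $\mF_n(\mX_1)$, i.e.\ it equals $\{\hat D_n > n^{-1/d}\}=\{n\hat D_n^d>1\}$; likewise $\{(\mF(\mX_1),\dots,\mF(\mX_n))\in\cW_n\}=\{nD_n^d>1\}$. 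Hence Lemma~\ref{an_depequiv} is exactly the assertion that the two conditional survival functions agree asymptotically at the single point $t=1$. I would first note that its proof is insensitive to the specific radius: replacing $n^{-1/d}$ throughout by $(t/n)^{1/d}$ for a fixed $t>0$ keeps the choice $\delta_n=n^{-1/2+\epsilon}$ valid, since the annulus-volume bound still reads $o(n^{-1})$ under $\epsilon<\tfrac12-\tfrac1d$. Thus for every $t>0$,
\[
\frac{\pr{n\hat D_n^d > t\cdl \mX_1}}{\pr{n D_n^d > t\cdl \mX_1}}\las 1 .
\]

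Next I would evaluate the denominator directly, exploiting that, conditional on $\mX_1$, the vectors $\mF(\mX_2),\dots,\mF(\mX_n)$ are i.i.d.\ draws from the copula with density $f$. Writing $p_n(t):=\pr{\mF(\mX_2)\in\cB(\mF(\mX_1),(t/n)^{1/d})\cdl \mX_1}$, conditional independence gives $\pr{nD_n^d>t\cdl \mX_1}=(1-p_n(t))^{n-1}$. Since $f$ is continuous and the ball shrinks to $\{\mF(\mX_1)\}$, continuity of $f$ yields $p_n(t)=f(\mF(\mX_1))\,\omega_d\, t/n+o(n^{-1})$, where $\omega_d:=\lambda_d\{\cB(\bm{0},1)\}$ is the volume of the unit ball, so that
\[
\pr{nD_n^d>t\cdl \mX_1}\las \exp\{-f(\mF(\mX_1))\,\omega_d\, t\}.
\]
Because $\mF(\mX_1)$ lies in the interior of the support with probability one, the rate $f(\mF(\mX_1))\,\omega_d$ is a.s.\ strictly positive and the limit lies in $(0,1)$.

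Finally I would combine the two displays. For a.e.\ realization of $\mX_1$ the denominator converges to a constant in $(0,1)$ while the ratio converges to $1$, so the numerator $\pr{n\hat D_n^d>t\cdl \mX_1}$ converges to the same limit $\exp\{-f(\mF(\mX_1))\,\omega_d\, t\}$. As this holds for every $t>0$ and the limiting survival function is continuous in $t$, both $nD_n^d$ and $n\hat D_n^d$ converge, conditional on $\mX_1$, to the exponential law with rate $f(\mF(\mX_1))\,\omega_d$---the same law---which is the claim.

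The bulk of the difficulty has already been absorbed into Lemma~\ref{an_depequiv}, which tames the dependence created by replacing $\mF$ with the empirical $\mF_n$; the residual obstacle is organizational rather than deep. The two points that require care are (i) verifying that the radius in Lemma~\ref{an_depequiv} may be taken to be $(t/n)^{1/d}$, so that one extracts a full distributional limit rather than the single-point statement, and (ii) the interchange of the almost-sure ratio limit (in $\mX_1$) with the deterministic convergence of the denominator, which is legitimate only because the denominator tends to a nondegenerate constant bounded away from $0$ and $1$---a property that fails precisely on the measure-zero set where $f(\mF(\mX_1))=0$ and must therefore be excluded.
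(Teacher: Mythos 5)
Your proof is correct and follows essentially the same route as the paper's: rewrite $\{n\hat D_n^d>t\}$ and $\{nD_n^d>t\}$ as the events in Lemma~\ref{an_depequiv} with radius $(t/n)^{1/d}$, use that lemma to reduce to the conditionally i.i.d.\ denominator, and compute its limit $\exp\{-f(\mF(\mX_1))\lambda_d(\cB(\bm{0},1))\,t\}$ directly. If anything, you are more careful than the paper, which applies the lemma at a general radius $r_n$ without remarking that its statement fixes the radius at $n^{-1/d}$; your observation that the proof survives the replacement $n^{-1/d}\mapsto(t/n)^{1/d}$ (and your exclusion of the null set where $f(\mF(\mX_1))=0$) makes that implicit step explicit.
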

\begin{proof}
    First, fixing $v > 0$ and letting $r_n := v/n$,
    \al{
    \pr{n\hat D_n^d > v| \mX_1} &= \pr{\min_{2 \leq i \leq n} \nm{\mF_n(\mX_i) - \mF_n(\mX_1)} > r_n | \mX_1}.
    }
    Applying Lemma \ref{an_depequiv}, we calculate
    $$
    \frac{\pr{\min_{2 \leq i \leq n} \nm{\mF_n(\mX_i) - \mF_n(\mX_1)} > r_n | \mX_1}}{\pr{\min_{2 \leq i \leq n} \nm{\mF(\mX_i) - \mF(\mX_1)} > r_n | \mX_1}} \las 1,
    $$
    so it suffices to compute the limit of the denominator. Using the independence and identical distribution of the $\mF(\mX_i)$, 
    \al{
    \pr{\min_{2 \leq i \leq n} \nm{\mF(\mX_i) - \mF(\mX_1)} > r_n \cdl \mX_1}
    &= \pa{1 - \pr{\nm{\mF(\mX_2) - \mF(\mX_1)} \leq r_n | \mX_1}}^{n-1}.
    }
    Since we have already assumed $\mF$ to admit a continuous density, 
    \al{
    \pr{\nm{\mF(\mX_2) - \mF(\mX_1)} \leq r_n | \mX_1} &= \int_{\cB\pa{\mF(\mX_1), r_n}} f(z) \, dz\\
    &=  \int_{\cB\pa{\mF(\mX_1), r_n}} f\pa{\mF(\mX_1)} + o(1) \, dz\\
    &= f(\mF(\mX_1)) \cdot \lambda_d(\mathbf{B}(\mF(\mX_1), r_n)) + \int_{\mathbf{B}(\mF(\mX_1), r_n)}o(1) \, dz.
    }
     Using the continuity of $f$, along with the translation-invariance and scaling properties of $\lambda_d$,
    \al{
    \pa{1 - \lambda_d\pa{\cB\pa{\mF(\mX_1), f(\mF(\mX_1))^{1/d} r_n}} + o(n^{-1})}^{n-1}
    &= \pa{1 - \frac{v f(\mF(\mX_1))}{n}\lambda_d\pa{\cB(0, 1)} + o(n^{-1})}^{n-1}\\
    &\rightarrow \exp\pa{-vf(\mF(\mX_1)) \cdot\lambda_d\pa{\cB(0,1)}}.
    }
    Thus, conditional on $\mX_1$, $n \hat D_n^d$ and $n \hat D_n^d$ both converge weakly to an exponential random variable with parameter $f(\mF(\mX_1)) \cdot \lambda_d(\cB(0,1))$.
\end{proof}

Corollary $\ref{an_hatdn_tight}$ states that the nearest neighbor to the first data point, among weakly dependent data points, converges at a slower than root-$n$ rate for dimension larger than $2$. As a result, the next two lemmas demonstrate that computing nearest neighbors based on empirical CDFs or population CDFs are equivalent, in the limit. 

\begin{lemma}\label{an_kn_dn}
 For any sequence $\delta_n$ satisfying $\|\mF_n - \mF\|_\infty = o_p(\delta_n)$, 
$$
\pr{\hat K_n < \hat D_n + \delta_n} \rightarrow 0.
$$
\end{lemma}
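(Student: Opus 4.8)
The plan is to split the argument into a deterministic perturbation step, which trades the empirical nearest-neighbour distances $\hat D_n,\hat K_n$ for their population counterparts, and a probabilistic step controlling the \emph{population} nearest-neighbour gap. Write $K_n$ for the distance from $\mF(\mX_1)$ to its second nearest neighbour among $\mF(\mX_2),\dots,\mF(\mX_n)$, the population analogue of $\hat K_n$; under the continuity of $f$ assumed in Theorem~\ref{an} ties occur with probability zero, so $\hat D_n<\hat K_n$ and $D_n<K_n$ almost surely.

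First I would record the Lipschitz stability of order statistics under a uniform perturbation. For every $j\neq 1$ the reverse triangle inequality gives $\big|\,\nm{\mF_n(\mX_1)-\mF_n(\mX_j)}-\nm{\mF(\mX_1)-\mF(\mX_j)}\,\big|\le 2\|\mF_n-\mF\|_\infty$, and since the smallest and the second smallest of finitely many numbers are each $1$-Lipschitz under a uniform perturbation of those numbers, it follows that $|\hat D_n-D_n|\vee|\hat K_n-K_n|\le 2\|\mF_n-\mF\|_\infty$, whence $\hat K_n-\hat D_n\ge (K_n-D_n)-4\|\mF_n-\mF\|_\infty$. Combining this with $\|\mF_n-\mF\|_\infty=o_p(\delta_n)$ yields the inclusion
\[
\cp{\hat K_n<\hat D_n+\delta_n}\subseteq \cp{K_n-D_n<2\delta_n}\cup\cp{4\|\mF_n-\mF\|_\infty>\delta_n},
\]
whose second event has probability $o(1)$. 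It therefore suffices to prove $\pr{K_n-D_n<2\delta_n}\to 0$.

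Next I would condition on $\mX_1$ and exploit that $\mF(\mX_2),\dots,\mF(\mX_n)$ are i.i.d.\ with continuous density $f$. Writing $p(r):=\pr{\mF(\mX_2)\in\cB(\mF(\mX_1),r)\mid\mX_1}$, continuity of $f$ gives $p(r)=f(\mF(\mX_1))\lambda_d(\cB(0,r))(1+o(1))$ together with the shell estimate $p(r+2\delta_n)-p(r)\lesssim r^{d-1}\delta_n$ as $r\to0$. A first-moment (union) bound over which sample point realises the nearest neighbour, followed by conditioning on that distance equalling $D_n$ and bounding the chance that one of the remaining points lands in the thin shell $\cB(\mF(\mX_1),D_n+2\delta_n)\setminus\cB(\mF(\mX_1),D_n)$, then produces
\[
\pr{K_n<D_n+2\delta_n\mid\mX_1}\;\lesssim\; n\,\ep{\,p(D_n+2\delta_n)-p(D_n)\cdl\mX_1}\;\lesssim\; n^{1/d}\delta_n,
\]
where the last bound uses $D_n\asymp n^{-1/d}$, i.e.\ the exponential limit of $nD_n^d$ from Corollary~\ref{an_hatdn_tight}, so that $r^{d-1}$ contributes the factor $n^{-(d-1)/d}$. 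Taking expectations over $\mX_1$ by bounded convergence (the conditional probabilities are at most $1$ and tend to $0$ for a.e.\ $\mX_1$, since $f(\mF(\mX_1))>0$ almost surely) gives $\pr{K_n-D_n<2\delta_n}\lesssim n^{1/d}\delta_n\to 0$. Note that this vanishes exactly when $\delta_n=o(n^{-1/d})$, the operative regime here; it is nonempty because $d>2$ forces $n^{-1/2}=o(n^{-1/d})$, so the $n^{-1/2}$-scale fluctuation of $\mF_n-\mF$ is negligible against the $n^{-1/d}$ nearest-neighbour scale.

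The hard part will be the population gap estimate: unlike Corollary~\ref{an_hatdn_tight}, which only describes the marginal law of $D_n$, here I need a \emph{second-order} Poisson-type fact, namely that two of the i.i.d.\ points are unlikely to fall within relative distance $\delta_n$ of one another at the nearest-neighbour scale. Making the shell estimate $p(r+2\delta_n)-p(r)\asymp r^{d-1}\delta_n$ precise while integrating the resulting bound against the \emph{random} law of $D_n$, and justifying the bounded-convergence passage over $\mX_1$, is the delicate part; one must also verify that the volume scaling $\lambda_d(\cB(\mF(\mX_1),r))\asymp r^d$ survives the clipping that occurs when $\mF(\mX_1)$ lies near the boundary of $[0,1]^d$, which it does up to a location-dependent constant and hence does not affect the $n^{1/d}\delta_n$ rate.
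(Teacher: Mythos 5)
Your proposal takes a genuinely different route from the paper. The paper's proof is soft and rate-free: it partitions over the identity of the second nearest neighbor, collapses the sum by exchangeability to the single probability $\pr{\mF_n(\mX_2)\in\cB(\mF_n(\mX_1),\hat D_n+\delta_n)}$, swaps empirical for population CDFs on the event $\{\|\mF_n-\mF\|_\infty\le\delta_n\}$ at the cost of enlarging the radius to $\hat D_n+3\delta_n$, and finishes with continuity of the law of $\mF(\mX)$ plus dominated convergence; no nearest-neighbor scale $n^{-1/d}$ ever appears. You instead (i) pass to population order statistics via the Lipschitz stability bound $|\hat D_n-D_n|\vee|\hat K_n-K_n|\le 2\|\mF_n-\mF\|_\infty$, and (ii) prove a quantitative thin-shell estimate at the Poisson scale, giving $\pr{K_n-D_n<2\delta_n}\lesssim n^{1/d}\delta_n$. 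Step (i) is clean and correct; step (ii) is correct in outline, though as you acknowledge it still owes the moment control needed to integrate the shell bound against the law of $D_n$ (e.g.\ $\ep{(nD_n^d)^{(d-1)/d}\cdl \mX_1}=O(1)$, which follows from $\pr{D_n>t\cdl\mX_1}=(1-p(t))^{n-1}$ and continuity of $f$) and the boundary-clipping check. A minor slip: your side remark that $\hat D_n<\hat K_n$ almost surely is not right, since empirical ranks live on a lattice and distance ties occur with positive probability, but you never use it.

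The substantive discrepancy is your restriction $\delta_n=o(n^{-1/d})$, which the lemma's statement does not impose. This is not a defect of your argument: the lemma as literally stated is false. For any $\delta_n$ with $n^{-1/d}\ll\delta_n\ll 1$ (which still satisfies $\|\mF_n-\mF\|_\infty=o_{\P}(\delta_n)$), both $\hat D_n$ and $\hat K_n$ are $O_{\P}(n^{-1/d})$, hence $\hat K_n-\hat D_n=o_{\P}(\delta_n)$ and $\pr{\hat K_n<\hat D_n+\delta_n}\to 1$, not $0$. The paper's proof reaches the unrestricted conclusion only through a flawed exchangeability step: by symmetry, $\sum_{i=2}^n\pr{\mF_n(\mX_i)\in\cB(\mF_n(\mX_1),\hat D_n+\delta_n),\,M(1)=i}$ equals the \emph{conditional} probability $\pr{\mF_n(\mX_2)\in\cB(\mF_n(\mX_1),\hat D_n+\delta_n)\cdl M(1)=2}$, not the unconditional one; conditioning on $\{M(1)=2\}$ places $\mF_n(\mX_2)$ at distance exactly $\hat K_n\asymp n^{-1/d}$ from $\mF_n(\mX_1)$, so the event is governed by the comparison of $\delta_n$ with $n^{-1/d}$, whereas unconditionally $\mX_2$ is a generic point and the event is rare for any $\delta_n\to 0$. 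Since the paper applies the lemma only with $\delta_n\asymp n^{-1/2+\epsilon}$, $0<\epsilon<1/2-1/d$, which lies squarely in your regime $n^{-1/2}\ll\delta_n\ll n^{-1/d}$ (this is where $d>2$ enters), your restricted version is exactly what the downstream proof of Lemma \ref{an_N_N_equiv} needs, and is arguably the form in which the lemma ought to be stated and proved.
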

\begin{proof}
     Expanding this probability, and noting that the $\mF_n(\mX_i)$ are identically distributed,
    \al{
    \pr{\hat K_n < \hat D_n + \delta_n} &=\sum_{i=2}^n \pr{\hat K_n < \hat D_n + \delta_n, M(1) = i}\\
    &= \sum_{i=2}^n \pr{\mF_n(\mX_i) \in \cB(\mF_n(\mX_1), \hat D_n + \delta_n), M(1) = i}\\
    &= \pr{\mF_n(\mX_2) \in \cB(\mF_n(\mX_1), \hat D_n + \delta_n)}.
    }
    Next, by the DKW inequality \citep{dvoretzky1956asymptotic} and the triangle inequality,
    \al{
    \pr{\mF_n(\mX_2) \in \cB(\mF_n(\mX_1), \hat D_n + \delta_n)} &= \pr{\mF_n(\mX_2) \in \cB(\mF_n(\mX_1), \hat D_n + \delta_n), \|\mF_n - \mF\|_\infty \leq \delta_n} + o(1)\\
    &\leq \pr{\mF(\mX_2) \in \cB(\mF(\mX_1), \hat D_n + 3\delta_n), \|\mF_n - \mF\|_\infty \leq \delta_n} + o(1).
    }
    By the continuity of $\mF(\mX)$, we have 
    $$
    \pr{\mF(\mX_2) \in \cB(\mF(\mX_1), \hat D_n + 3\delta_n) \cdl \mX_1} \las 0.
    $$
    Thus, the dominated convergence theorem tells us that $\pr{\hat K_n < \hat D_n + \delta_n} \rightarrow 0$ as well.
\end{proof}
\begin{lemma}\label{an_N_N_equiv}
As $n \rightarrow\infty$, $\pr{N(i) = \check N(i)} \rightarrow 1$.
\end{lemma}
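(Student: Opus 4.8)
The plan is to show that, with probability tending to one, the empirical nearest-neighbor assignment $N(1)$ coincides with the population one $\check N(1)$ by exploiting a separation of scales between the nearest-neighbor \emph{margin} and the uniform CDF-estimation error. Without loss of generality fix $i=1$. I would choose $\delta_n = n^{-1/2+\epsilon}$ with $0 < \epsilon < \tfrac12 - \tfrac1d$; since $d>2$ this range is nonempty, the Dvoretsky--Kiefer--Wolfowitz inequality gives $\|\mF_n - \mF\|_\infty = O_p(n^{-1/2}) = o_p(\delta_n)$, and at the same time $\delta_n = o(n^{-1/d})$.

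The first step is to record a margin bound. Applying Lemma \ref{an_kn_dn} with this $\delta_n$, the empirical second-nearest-neighbor distance exceeds the nearest one by at least $\delta_n$, so that $\pr{\hat K_n \geq \hat D_n + \delta_n} \to 1$. This separation is affordable precisely because, by Corollary \ref{an_hatdn_tight}, both $\hat D_n$ and $\hat K_n$ live at the scale $n^{-1/d}$, which dominates $\delta_n$.

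The second step is an argmin-stability argument. The triangle inequality yields, uniformly in $j$,
\[
\bigl|\,\nm{\mF_n(\mX_1) - \mF_n(\mX_j)} - \nm{\mF(\mX_1) - \mF(\mX_j)}\,\bigr| \leq 2\|\mF_n - \mF\|_\infty.
\]
I would then work on the event $E_n := \{\hat K_n \geq \hat D_n + \delta_n\} \cap \{2\|\mF_n - \mF\|_\infty < \delta_n/3\}$, which has probability tending to one by the first step and the choice of $\delta_n$. On $E_n$ the index $N(1)$ has population distance to $\mF(\mX_1)$ at most $\hat D_n + \delta_n/3$, whereas every competing index $j \neq N(1)$, $j \geq 2$, has empirical distance at least $\hat K_n$ and therefore population distance at least $\hat K_n - \delta_n/3 \geq \hat D_n + 2\delta_n/3$. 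Hence $N(1)$ is the strict minimizer of $j \mapsto \nm{\mF(\mX_1) - \mF(\mX_j)}$ as well, i.e.\ $\check N(1) = N(1)$ on $E_n$. Since the assumed continuity of $f$ makes ties among population distances occur with probability zero (so $\check N(1)$ is a.s.\ unique), this gives $\pr{N(1) = \check N(1)} \to 1$, and the identical distribution of the $(\mX_i, Y_i)$ extends the conclusion to every $i$.

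The crux of the argument---and the only place the restriction $d>2$ enters---is that the perturbation scale $\|\mF_n - \mF\|_\infty \asymp n^{-1/2}$ is strictly smaller than the nearest-neighbor margin scale $n^{-1/d}$, which is exactly what permits a $\delta_n$ simultaneously dominating the estimation error and dominated by the margin. When $d=2$ these two scales coincide, no such $\delta_n$ exists, the margin bound of Lemma \ref{an_kn_dn} is unavailable, and the two nearest-neighbor graphs need not agree in the limit; this is the singularity flagged in the remark following Theorem \ref{an}.
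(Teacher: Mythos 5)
Your proof is correct and follows essentially the same route as the paper's: the same choice $\delta_n = n^{-1/2+\epsilon}$ with $0<\epsilon<\tfrac12-\tfrac1d$, the DKW bound on $\|\mF_n-\mF\|_\infty$, the margin from Lemma \ref{an_kn_dn}, and a triangle-inequality argument transferring the empirical argmin to the population argmin. The only differences are bookkeeping constants (the paper uses a $5\delta_n$ margin against a $2\delta_n$ perturbation, you use $\delta_n$ against $\delta_n/3$), which are immaterial.
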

\begin{proof}
First, choose $0 < \epsilon < \frac{1}{2} - \frac{1}{d}$ and let $\delta_n := n^{-\frac{1}{2} + \epsilon}$. Using the fact that $\|\mF_n - \mF\|_\infty = O(n^{-1/2})$, 
\al{
\pr{N(1) = \check N(1)} &=\pr{N(1) = \check N(1), \|\mF_n - \mF\|_\infty \leq \delta_n} + o(1).
}
Then, Lemma \ref{an_kn_dn} says
$$
\pr{\hat K_n < \hat D_n + 5\delta_n} \rightarrow 0.
$$
Consequently, 
\al{
\pr{N(1) = \check N(1)} = \pr{N(1) = \check N(1), \|\mF_n - \mF\|_\infty \leq \delta_n, \hat K_n \geq \hat D_n + 5 \delta_n} +  o(1).
}
Without loss of generality, assume that indices $2$ and $3$ are the nearest and second-nearest neighbors, respectively, to index $1$. The conditions that $\|\mF_n - \mF\| \leq \delta_n$ and $\hat K_n \geq \hat D_n + 5\delta_n$ imply, using the triangle inequality, that $\|\mF(\mX_1) - \mF(\mX_3)\| \geq \hat K_n - 2\delta_n \geq \hat D_n + 3\delta_n$. On the other hand, $\|\mF_(\mX_1) - \mF(\mX_2)\| \leq \hat D_n + 2\delta_n$.  Thus, $N(1) = \check N(1) = 2$.

Put together, we find that $\pr{N(1) = \check N(1)} \rightarrow 1$, concluding the proof.
\end{proof}

Using this result, we may obtain the rate of convergence of the ratio $\hat D_n/D_n$.
\begin{lemma}\label{an_dn_equiv}
    Letting $\gamma := \frac{1}{2} - \frac{1}{d}$, $n^{ \gamma}\pa{\frac{\hat D_n}{ D_n} -1} = O_{\P}(1).$
\end{lemma}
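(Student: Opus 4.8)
The plan is to exploit Lemma \ref{an_N_N_equiv}, which guarantees that with probability tending to one the empirical and population nearest neighbors of index $1$ coincide, and then to compare the two distances \emph{along this common neighbor}. On the event $\{N(1) = \check N(1)\}$, writing $j := N(1) = \check N(1)$, both distances are evaluated at the same pair of points, so $\hat D_n = \nm{\mF_n(\mX_1) - \mF_n(\mX_j)}$ and $D_n = \nm{\mF(\mX_1) - \mF(\mX_j)}$. I would then use the decomposition
\[
n^\gamma\pa{\frac{\hat D_n}{D_n} - 1} = n^\gamma \cdot \frac{\hat D_n - D_n}{D_n},
\]
controlling numerator and denominator separately: the numerator is $O_\P(n^{-1/2})$ and the reciprocal denominator is $O_\P(n^{1/d})$, so that the product carries the exponent $\gamma - \tfrac12 + \tfrac1d = 0$.

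For the numerator, the reverse triangle inequality gives, on $\{N(1) = \check N(1)\}$,
\[
\ap{\hat D_n - D_n} \leq \nm{(\mF_n(\mX_1) - \mF_n(\mX_j)) - (\mF(\mX_1) - \mF(\mX_j))} \leq 2\nm{\mF_n - \mF}_\infty,
\]
and the Dvoretzky--Kiefer--Wolfowitz inequality yields $\nm{\mF_n - \mF}_\infty = O_\P(n^{-1/2})$ (coordinatewise, then a union bound over the $d$ fixed coordinates). Hence $\ap{\hat D_n - D_n} = O_\P(n^{-1/2})$ on the good event.

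For the denominator, Corollary \ref{an_hatdn_tight} shows that, conditionally on $\mX_1$, $nD_n^d$ converges weakly to an exponential law with parameter $\mu(\mX_1) := f(\mF(\mX_1))\lambda_d(\cB(0,1))$. In particular the family $\{nD_n^d\}$ is conditionally tight and bounded away from $0$ in probability; integrating the conditional statement against the law of $\mX_1$ (using dominated convergence, since $1 - e^{-\mu(\mX_1)\eta} \to 0$ as $\eta \downarrow 0$) upgrades this to $(nD_n^d)^{-1} = O_\P(1)$, i.e. $D_n^{-1} = O_\P(n^{1/d})$. Combining the three displays and discarding the event $\{N(1) \neq \check N(1)\}$, whose probability vanishes by Lemma \ref{an_N_N_equiv}, gives
\[
n^\gamma \cdot \frac{\ap{\hat D_n - D_n}}{D_n} \leq n^{1/2 - 1/d}\cdot O_\P(n^{-1/2})\cdot O_\P(n^{1/d}) = O_\P(1),
\]
as claimed.

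The main obstacle is making the denominator bound fully rigorous: Corollary \ref{an_hatdn_tight} is a \emph{conditional} weak-convergence statement with a random, and possibly small, rate parameter $\mu(\mX_1)$, so I must verify that $nD_n^d$ is bounded away from $0$ in probability \emph{unconditionally}. This requires controlling the mass that the copula density $f$ places near its lower values; assuming $f$ is continuous on a compact support, as in Theorem \ref{an}, it is bounded, and the dominated-convergence argument above handles small $\eta$ uniformly, provided one takes the limits in $n$ and $\eta$ in the correct order. A secondary, routine point is that all $O_\P$ bounds above are stated on the event $\{N(1) = \check N(1)\}$ and must be glued together with Lemma \ref{an_N_N_equiv}; since that event has probability tending to one, the $O_\P$ conclusion is unaffected.
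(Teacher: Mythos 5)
Your proposal is correct and follows essentially the same route as the paper's proof: restrict to the event $\{N(1)=\check N(1)\}$ via Lemma \ref{an_N_N_equiv}, bound $\ap{\hat D_n - D_n} \leq 2\nm{\mF_n-\mF}_\infty = O_{\P}(n^{-1/2})$ by the triangle inequality, and use the exponential limit of $nD_n^d$ from Corollary \ref{an_hatdn_tight} to get $D_n^{-1}=O_{\P}(n^{1/d})$, so the exponents cancel. The only difference is that you make explicit the unconditional tightness argument for $(nD_n^d)^{-1}$ (integrating the conditional limit law and sending $\eta \downarrow 0$ after $n \to \infty$), a step the paper leaves implicit in its closing ``which establishes the claim.''
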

\begin{proof}
    Using Lemma \ref{an_N_N_equiv},
    \al{
    \pr{\ap{n^{\gamma}\pa{\frac{\hat D_n}{D_n} - 1}} > \epsilon} &= \pr{\ap{n^{\gamma}\pa{\frac{\hat D_n}{D_n} - 1}} > \epsilon, N(i) = \check N(i)} + o(1)\\
    &= \pr{\ap{n^{\gamma}\pa{\frac{\nm{\mF_n(\mX_1) - \mF_n(\mX_{\check N(1)})}}{D_n} - 1}} > \epsilon, N(i) = \check N(i)} + o(1).
    }
    By the triangle inequality, and the fact that $\|\mF_n - \mF\|_\infty = O_{\P}(n^{-1/2})$, 
    $$
    n^{\gamma}\pa{\frac{\nm{\mF_n(\mX_1) - \mF_n(\mX_{N(1)})}}{D_n} - 1} \leq 2n^{\gamma}\frac{\nm{\mF_n - \mF}_\infty}{D_n},
    $$
    which establishes the claim.
\end{proof}

We establish an analogous equivalence result to Lemma \ref{an_depequiv} for the nearest-neighbor directions
$$
\hat \mU_n := \frac{\mF_n(\mX_1) - \mF_n(\mX_{N(1)})}{\hat D_n}~~~{\rm and}~~~
\mU_n := \frac{\mF(\mX_1) - \mF(\mX_{\check N(1)})}{D_n}.
$$

\begin{lemma}\label{an_un_equiv}
    As $n\rightarrow \infty$, $\|\hat \mU_n - \mU_n\| \overset{\P}{\rightarrow} 0$.
\end{lemma}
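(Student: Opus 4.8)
The plan is to express both unit vectors in terms of the same (random) neighbor and then apply a perturbation bound for normalized vectors, controlling the numerator by $\nm{\mF_n - \mF}_\infty$ and the denominator by the order of $D_n$. First I would restrict to the event $\cp{N(1) = \check N(1)}$, which by Lemma \ref{an_N_N_equiv} has probability tending to $1$. On this event both directions are built from the same neighbor index $\check N(1)$: writing $\mathbf a := \mF(\mX_1) - \mF(\mX_{\check N(1)})$ and $\hat{\mathbf a} := \mF_n(\mX_1) - \mF_n(\mX_{\check N(1)})$, so that $\nm{\mathbf a} = D_n$ and $\nm{\hat{\mathbf a}} = \hat D_n$, we have $\mU_n = \mathbf a/\nm{\mathbf a}$ and $\hat \mU_n = \hat{\mathbf a}/\nm{\hat{\mathbf a}}$. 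Since $\nm{\hat\mU_n - \mU_n} \le 2$ always and the complementary event is asymptotically negligible, it suffices to bound $\nm{\hat\mU_n - \mU_n}$ on $\cp{N(1) = \check N(1)}$.

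Next I would bound the numerator perturbation. Because $\hat{\mathbf a}$ and $\mathbf a$ differ only through replacing $\mF$ by $\mF_n$ at the two points $\mX_1$ and $\mX_{\check N(1)}$, the triangle inequality gives $\nm{\hat{\mathbf a} - \mathbf a} \le 2\nm{\mF_n - \mF}_\infty = O_\P(n^{-1/2})$. I would then apply the elementary normalization inequality $\nm{\mathbf u/\nm{\mathbf u} - \mathbf v/\nm{\mathbf v}} \le 2\nm{\mathbf u - \mathbf v}/\nm{\mathbf v}$, valid for nonzero $\mathbf u,\mathbf v$, with $\mathbf u = \hat{\mathbf a}$ and $\mathbf v = \mathbf a$, to obtain
\[
\nm{\hat\mU_n - \mU_n} \le \frac{4\nm{\mF_n - \mF}_\infty}{D_n}.
\]

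Finally I would control $D_n$ from below at the correct rate. Corollary \ref{an_hatdn_tight} shows that, conditional on $\mX_1$, $nD_n^d$ converges in law to a nondegenerate exponential variable; integrating out $\mX_1$ (using bounded convergence on the conditional distribution functions) yields a proper limit on $(0,\infty)$ with no atom at the origin, so $1/(nD_n^d) = O_\P(1)$, i.e.\ $1/D_n = O_\P(n^{1/d})$. Combining this with the numerator bound gives
\[
\nm{\hat\mU_n - \mU_n} = O_\P(n^{-1/2}) \cdot O_\P(n^{1/d}) = O_\P\!\big(n^{-(1/2 - 1/d)}\big),
\]
which is $o_\P(1)$ precisely because $d > 2$ makes the exponent negative, so that $\nm{\hat\mU_n - \mU_n} \overset{\P}{\rightarrow} 0$.

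I expect the only delicate point to be the denominator control: one must ensure that $D_n$ does not shrink faster than $n^{-1/d}$, i.e.\ $1/D_n = O_\P(n^{1/d})$, which rests on the non-degeneracy (no mass at $0$) of the exponential limit in Corollary \ref{an_hatdn_tight} rather than on its first moment. This is also exactly where the hypothesis $d > 2$ is essential: the identical computation produces only $O_\P(1)$ when $d = 2$, reflecting the root-$n$ singularity flagged in the remarks following Theorem \ref{an}.
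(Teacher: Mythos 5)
Your proof is correct and shares its skeleton with the paper's: restrict to the event $\cp{N(1)=\check N(1)}$ via Lemma \ref{an_N_N_equiv}, control the numerator by $\nm{\mF_n-\mF}_\infty = O_{\P}(n^{-1/2})$, and control the denominator via the tightness of the rescaled nearest-neighbor distance from Corollary \ref{an_hatdn_tight}. The genuine difference lies in the perturbation step. The paper decomposes $\hat\mU_n-\mU_n$ into a DKW term divided by $\hat D_n$ plus the ratio term $\ap{D_n/\hat D_n-1}$, and needs Lemma \ref{an_dn_equiv} to dispose of the second term; you instead invoke the elementary normalization inequality $\nm{\mathbf u/\nm{\mathbf u}-\mathbf v/\nm{\mathbf v}}\le 2\nm{\mathbf u-\mathbf v}/\nm{\mathbf v}$ with the population distance $D_n$ in the denominator, which absorbs both error sources in one stroke and makes Lemma \ref{an_dn_equiv} unnecessary inside this proof. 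Your route is thus marginally more economical, and it is also more precise on the one delicate point: what the argument actually needs from Corollary \ref{an_hatdn_tight} is the lower-bound tightness $1/D_n=O_{\P}(n^{1/d})$, resting on the absence of an atom at zero in the (mixture of) exponential limit, whereas the paper's wording ``recall that $\hat D_n=O_{\P}(n^{-1/d})$'' cites only the upper bound, which by itself would not control $\nm{\mF_n-\mF}_\infty/\hat D_n$; your explicit handling of this is the right reading of the corollary. Both routes produce the same rate $O_{\P}(n^{-(1/2-1/d)})$ and hence the same conclusion precisely when $d>2$.
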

\begin{proof}
Pick any $\epsilon > 0$. Using Lemma \ref{an_N_N_equiv},
$$
\pr{\nm{\hat \mU_n - \mU_n} > \epsilon} = \pr{\nm{\hat \mU_n - \mU_n} > \epsilon, N(1) =\check N(1)} + o(1).
$$
If $N(1) = \check N(1)$, using the triangle inequality,
    \al{
    \|\hat \mU_n - \mU_n\| &= \nm{\frac{\mF_n(\mX_1) - \mF(\mX_1) + \mF( \mX_{N(1)}) - \mF_n(\mX_{N(1)})}{\hat D_n}} + \nm{\frac{\mF(\mX_1) - \mF(\mX_{N(1)})}{\hat D_n} - \mU_n }.
    }
    $$
    \|\hat \mU_n - \mU_n\| \leq \frac{2 \|\mF_n - \mF\|_\infty}{\hat D_n} + \nm{\pa{\frac{D_n}{\hat D_n} - 1} \mU_n }.
    $$
    Recall that $\hat D_n = O_{\P}(n^{-1/d})$ from Corollary \ref{an_hatdn_tight}, and $\|\mF_n - \mF\|_\infty = O_{\P}(n^{-1/2})$ by \cite{dvoretzky1956asymptotic}, so that the first term goes to $0$ in probability. Finally, observing $\|\mU_n\| = 1$, and using Lemma \ref{an_dn_equiv} to show $|D_n/\hat D_n - 1| \overset{\P}{\rightarrow} 0$ concludes the proof.
\end{proof}

In view of the previous lemmas, we deduce the following weak convergence result. 
\begin{lemma}\label{an_weakcvg_xvu}
    As $n$ tends to $\infty$, $(\mX_1, n \hat D_n, \hat \mU_n) \leadsto (X_1, \mathscr{V}, \mathscr{U})$, where the conditional law of $\mathscr{V}$ given  $ \mX_1$ is that of an exponential random variable with parameter $f_X( \mF(\mX_1)) \cdot \lambda_d(\cB(0,1))$, and the conditional law of $\mathscr{U}$ given $\mathscr{V}$ and $\mX_1$ is uniform on the sphere of volume $\mathscr{V}$ centered at $\mX_1$.
\end{lemma}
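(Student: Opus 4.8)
The plan is to deduce the weak-convergence statement for the empirical triple $(\mX_1, n\hat D_n^d, \hat\mU_n)$ from the corresponding statement for the ``oracle'' triple $(\mX_1, nD_n^d, \mU_n)$ built from the population CDF $\mF$, and then to identify the limit of the oracle triple by a local Poissonization argument. (Throughout I work with $n\hat D_n^d$, the distance functional whose exponential limit is pinned down in Corollary \ref{an_hatdn_tight}.)

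First I would carry out the reduction to the oracle quantities. By Lemma \ref{an_dn_equiv}, $\hat D_n/D_n = 1 + O_{\P}(n^{-\gamma})$ with $\gamma = \frac12 - \frac1d > 0$ since $d > 2$, so $(\hat D_n/D_n)^d - 1 = o_{\P}(1)$; combined with $nD_n^d = O_{\P}(1)$ from Corollary \ref{an_hatdn_tight}, this gives
\[
n\hat D_n^d - nD_n^d = nD_n^d\big\{(\hat D_n/D_n)^d - 1\big\} = o_{\P}(1).
\]
Together with $\nm{\hat\mU_n - \mU_n} = o_{\P}(1)$ from Lemma \ref{an_un_equiv} and the fact that the first coordinate $\mX_1$ is common to both triples, Slutsky's theorem reduces the problem to proving $(\mX_1, nD_n^d, \mU_n) \leadsto (X_1, \mathscr{V}, \mathscr{U})$, where now the $n-1$ points $\mF(\mX_2), \dots, \mF(\mX_n)$ are genuinely i.i.d.

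Next I would establish the conditional convergence given $\mX_1$. The marginal convergence $nD_n^d \leadsto \mathscr{V}$, with $\mathscr{V}$ exponential of parameter $f(\mF(\mX_1))\lambda_d(\cB(0,1))$, is exactly Corollary \ref{an_hatdn_tight}; the new content is the joint law of the (scaled) distance and the direction $\mU_n$. Fixing $\mX_1$ and rescaling the i.i.d.\ configuration about $\mF(\mX_1)$ through the map $\mF(\mX_i) \mapsto n^{1/d}(\mF(\mX_i) - \mF(\mX_1))$, the continuity of the copula density $f$ at $\mF(\mX_1)$ makes this binomial point process behave, in the relevant window, like a homogeneous (hence isotropic) Poisson process of intensity $f(\mF(\mX_1))$. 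Concretely, I would compute the joint density of $(D_n, \mU_n)$ given $\mX_1$; writing the contribution of a candidate nearest neighbor at radius $r$ and direction $\omega$, it factorizes to leading order as $(n-1) f(\mF(\mX_1)) r^{d-1}\,\mathrm{d}r$, the uniform surface measure on the unit sphere, and the ``no closer point'' factor $\big(1 - f(\mF(\mX_1))\lambda_d(\cB(0,1)) r^d + o(r^d)\big)^{n-2}$. Substituting $r = (v/n)^{1/d}$ and letting $n \to \infty$ shows that $(nD_n^d, \mU_n)$ converges, conditionally on $\mX_1$, to $(\mathscr{V}, \mathscr{U})$ with $\mathscr{V}$ exponential as above and $\mathscr{U}$ uniform on the unit sphere and independent of $\mathscr{V}$.

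Finally I would lift this conditional statement to the claimed joint convergence. For any bounded continuous $F$, the map $z \mapsto F(\mX_1, z)$ is bounded continuous for each fixed value of $\mX_1$, so the conditional weak convergence above gives $\ep{F(\mX_1, (nD_n^d, \mU_n)) \given \mX_1} \to \ep{F(\mX_1, (\mathscr{V}, \mathscr{U})) \given \mX_1}$ almost surely; since $F$ is bounded, dominated convergence then yields convergence of the unconditional expectations, i.e.\ $(\mX_1, nD_n^d, \mU_n) \leadsto (X_1, \mathscr{V}, \mathscr{U})$ with the stated conditional structure. The main obstacle is the middle step: making the local Poisson approximation rigorous --- justifying the factorization of the joint law of $(D_n, \mU_n)$ and, in particular, the asymptotic independence of direction from distance --- while controlling the error introduced by the non-constant density $f$ purely through its continuity at $\mF(\mX_1)$.
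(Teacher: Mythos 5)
Your proposal is correct, and its transfer step is exactly the paper's: the paper likewise deduces the statement for $(\mX_1, n\hat D_n, \hat \mU_n)$ from the corresponding statement for the oracle triple $(\mX_1, n D_n, \mU_n)$ by invoking Lemmas \ref{an_dn_equiv} and \ref{an_un_equiv}. The difference is in how the oracle limit is obtained. The paper simply cites Lemma 2.1 of \citet{henze1987fraction}, which already asserts the joint convergence $(\mX_1, nD_n, \mU_n) \leadsto (\mX_1, \mathscr{V}, \mathscr{U})$ for i.i.d.\ points with a continuous density; you instead re-derive it by the local rescaling/Poissonization computation (joint density of radius and direction, the ``no closer point'' factor, then a conditional-to-unconditional lifting by dominated convergence). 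Your route is self-contained and makes transparent why the direction is asymptotically uniform and independent of the scaled distance, but the step you yourself flag as the main obstacle---making the Poisson approximation rigorous using only continuity of $f$ at $\mF(\mX_1)$---is precisely the content of Henze's lemma, so the citation buys exactly that rigor and lets the paper finish in two lines; your sketch would need that work written out to be complete. Two minor reconciliations with the statement as printed: working with $n\hat D_n^d$ (as you do, consistent with Corollary \ref{an_hatdn_tight}) rather than the literal $n\hat D_n$ is the correct reading of the lemma, and your formulation of the limit law (direction uniform on the unit sphere, independent of $\mathscr{V}$ conditionally on $\mX_1$) is equivalent to the paper's phrasing ``uniform on the sphere of volume $\mathscr{V}$ centered at $\mX_1$.''
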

\begin{proof}
    Lemma 2.1 of \cite{henze1987fraction} says that $(\mX_1, n D_n, U_n) \leadsto (\mX_1, \mathscr{V}, \mathscr{U}).$ By Lemmas \ref{an_dn_equiv} and \ref{an_un_equiv}, the random vector $(\mX_1, n\hat D_n, \hat \mU_n)$ must possess the same weak limit as that of  $(\mX_1, nD_n, U_n)$.
\end{proof}
Consequently, we obtain the main result of the section, by imitating the proof of Theorem 1.4 in \cite{henze1987fraction}.
\begin{corollary}\label{exp_avgnn} For $d > 2$,
$$
\ep{\frac{1}{n}\sum_{i,j,k=1}^n C_{i,j,k}} \rightarrow o_d~~~{\rm and}~~~
\ep{\frac{1}{n}\sum_{i,j=1}^n T_{i,j}} \rightarrow q_d
$$
as $n\rightarrow \infty$.
\end{corollary}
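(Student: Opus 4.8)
The plan is to reduce both averages to fixed-index probabilities using exchangeability, to replace the rank-based graph $N$ by the population graph $\check N$ so as to work with genuinely i.i.d.\ points, and then to evaluate the two limits by the local Poisson computation of \cite{henze1987fraction}. Because the pairs $(\mX_i,Y_i)$ are i.i.d.\ and ties are broken by independent uniforms, $T_{i,j}$ and $C_{i,j,k}$ are exchangeable in their indices. For $T$ this gives immediately
\[
\ep{\frac1n\sum_{i,j=1}^n T_{i,j}}=(n-1)\,\pr{1\leftrightarrow 2}=\pr{N(N(1))=1},
\]
while for $C$ the relevant distinct-index contribution is
\[
(n-1)(n-2)\,\pr{1\to 3,\,2\to 3}=n^2\,\pr{1\to 3,\,2\to 3}\,(1+o(1)).
\]
It thus remains to compute $\lim_n\pr{N(N(1))=1}$ and $\lim_n n^2\,\pr{1\to 3,\,2\to 3}$.

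To discard the dependence introduced by the empirical CDF, I would invoke Lemma \ref{an_N_N_equiv}: a union bound shows that $N(i)=\check N(i)$ holds simultaneously for the finitely many indices appearing above with probability tending to one, and since all indicators are bounded, every probability above may be evaluated with the population graph $\check N$ in place of $N$ at the cost of an $o(1)$ error. The graph $\check N$ is the Euclidean nearest-neighbor graph of the i.i.d.\ points $\mF(\mX_1),\dots,\mF(\mX_n)$, which carry the continuous copula density $f$ on $[0,1]^d$; this places us squarely in the i.i.d.\ setting of \cite{henze1987fraction}.

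I would then evaluate the two constants by imitating the proof of Theorem~1.4 in \cite{henze1987fraction}. Rescaling distances by $n^{1/d}$ around a typical point, the local configuration converges to a homogeneous Poisson process, with Corollary \ref{an_hatdn_tight} and Lemma \ref{an_weakcvg_xvu} providing the $n^{-1/d}$ scaling and the exponential void law that justify this picture. For $T$, placing the two candidate mutual neighbors at distance $r$ and demanding that no third point be closer to either produces a void over the union of two radius-$r$ balls whose centers lie $r$ apart; integrating this void probability, whereupon the local density cancels, yields $\pr{N(N(1))=1}\to q_d$, the term $I_{3/4}\big(\tfrac{d+1}{2},\tfrac12\big)$ entering as the normalized volume of the lens-shaped intersection of the two balls. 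For $C$, placing the shared neighbor at the origin and the two points at $\mx_1,\mx_2$, the requirement that each point prefer the origin to the other is exactly the constraint cutting out $\cS_d$, while the requirement that no further point be closer forces a void over $\cB(\mx_1,\|\mx_1\|)\cup\cB(\mx_2,\|\mx_2\|)$, of Lebesgue measure $V_d(\mx_1,\mx_2)$; integrating $\exp\{-V_d(\mx_1,\mx_2)\}$ over $\cS_d$ gives $n^2\,\pr{1\to 3,\,2\to 3}\to o_d$.

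I expect the main obstacle to lie in the rigor of this last step for the two- and three-point functionals. One must control the boundary effects of the compactly supported copula density, verify that the non-uniformity of $f$ indeed washes out under local rescaling (the density cancels only in the limit), and justify interchanging the limit $n\to\infty$ with the spatial integration, that is, establish enough tightness and uniform integrability for $n^2\,\pr{1\to 3,\,2\to 3}$ to converge to the integral $o_d$ rather than merely stay $O(1)$. The transfer lemmas preceding this corollary are designed precisely to reduce these difficulties to the classical i.i.d.\ computation, so the residual work is to port Henze's estimates through the $N=\check N$ reduction uniformly over the indices involved.
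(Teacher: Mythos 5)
Your high-level route is the paper's: use the preceding transfer lemmas to pass from the rank-based graph $N$ to the Euclidean nearest-neighbor graph $\check N$ of the i.i.d.\ points $\mF(\mX_i)$, then identify the limits by Henze's local Poisson computation; your descriptions of $q_d$ (two-ball void with the lens term $I_{3/4}$) and of $o_d$ (void of measure $V_d(\mx_1,\mx_2)$ over the region $\cS_d$) are correct. However, your transfer step has a genuine quantitative gap. You claim that Lemma \ref{an_N_N_equiv} plus ``a union bound over the finitely many indices appearing'' replaces $N$ by $\check N$ ``at the cost of an $o(1)$ error.'' The quantities being transferred are not $O(1)$ probabilities: $\pr{1\leftrightarrow 2}$ and $\pr{1\to 3,\,2\to 3}$ are of order $n^{-1}$ and $n^{-2}$ and are multiplied by $(n-1)$ and $(n-1)(n-2)$, so you need substitution errors of order $o(n^{-1})$ and $o(n^{-2})$, while Lemma \ref{an_N_N_equiv} gives only $\pr{N(1)\neq \check N(1)}\to 0$ with no rate at all (its proof rests on Lemma \ref{an_kn_dn}, which is likewise rate-free). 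Your alternative $O(1)$ formulation $\pr{N(N(1))=1}$ does not rescue this: there the second index $N(1)$ is random, Lemma \ref{an_N_N_equiv} is a fixed-index statement, and the events $\cp{N(1)=j}$ and $\cp{N(j)\neq\check N(j)}$ are correlated, so a union bound over fixed indices says nothing.

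The gap is closable, but it needs ideas you did not state. For the mutual-NN term, exchangeability alone suffices: the symmetric difference of $\cp{N(1)=2,N(2)=1}$ and $\cp{\check N(1)=2,\check N(2)=1}$ is covered by four events of the form $\cp{N(a)=b,\ N(a)\neq\check N(a)}$ or $\cp{\check N(a)=b,\ N(a)\neq\check N(a)}$ with $\{a,b\}=\{1,2\}$, and summing over the free index gives, e.g., $(n-1)\pr{N(1)=2,\,N(1)\neq\check N(1)}=\pr{N(1)\neq\check N(1)}\to 0$. For the shared-NN term, the same argument leaves you with the term $(n-1)\pr{N(1)=N(2),\,N(1)\neq\check N(1)}$, and killing this last factor of $n$ requires an additional geometric input, such as the bounded in-degree of nearest-neighbor graphs: since $\sum_{j\neq 1}\ind(N(j)=N(1))\leq C_d$ deterministically for a constant $C_d$ depending only on $d$, exchangeability over $j$ yields $(n-1)\pr{N(1)=N(2),\,N(1)\neq\check N(1)}\leq C_d\,\pr{N(1)\neq\check N(1)}\to 0$. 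Note finally that the paper's own mechanism sidesteps this issue altogether: it does not substitute $\check N$ into the statement of Henze's theorem, but rather imitates the proof of Theorem 1.4 of \cite{henze1987fraction} directly, using the weak convergence of $(\mX_1, n\hat D_n,\hat \mU_n)$ established in Lemma \ref{an_weakcvg_xvu}, for which no rate on $\pr{N(1)\neq\check N(1)}$ is required.
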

\subsection{Proof of asymptotic normality}
Define 
$$
\tilde Q_n = \frac{1}{n} \sum_{i=1}^n F(Y_i) \wedge F(Y_{N(i)})- \frac{1}{n(n-1)}\sum_{i=1}^n \sum_{j:j\neq i} (F(Y_i)\wedge F(Y_j)).
$$

\begin{lemma}
As $n\rightarrow \infty, \sqrt{n} (Q_n - \tilde Q_n) \overset{\P}{\rightarrow} 0.$
\end{lemma}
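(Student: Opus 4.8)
The plan is to use the independence of $\mX$ and $Y$ to condition on $\mathcal{F}_n=\sigma(\mX_1,\dots,\mX_n)$: under this assumption the map $N(\cdot)$ is $\mathcal{F}_n$-measurable, while $Y_1,\dots,Y_n$ stay i.i.d.\ copies of $Y$. By the assumed continuity of $F$ I may work with the i.i.d.\ uniforms $U_m:=F(Y_m)$, so that $F(Y_i\wedge Y_j)=U_i\wedge U_j$ and $F_n(Y_i\wedge Y_j)=\frac1n\sum_k\ind(U_k\le U_i\wedge U_j)$. Two exact algebraic identities then drive the argument. First, monotonicity of both $F_n$ and $F$ gives, almost surely (as $Y$ has no ties),
\[
F_n(Y_i)\wedge F_n(Y_{N(i)})-F(Y_i)\wedge F(Y_{N(i)})=(F_n-F)(Y_i\wedge Y_{N(i)}),
\]
so the leading terms of $Q_n$ and $\tilde Q_n$ differ by $A_n:=\frac1n\sum_i(F_n-F)(Y_i\wedge Y_{N(i)})$. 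Second,
\[
\frac1n\sum_{i=1}^nG_n(Y_i)^2=\frac1{n^2}\sum_{i,j=1}^nF_n(Y_i\wedge Y_j),
\]
so after discarding the $O(n^{-1})$ diagonal and the negligible gap between $\frac1{n^2}$ and $\frac1{n(n-1)}$, the centering terms differ by $B_n:=\frac1n\sum_i\frac1{n-1}\sum_{j\ne i}(F_n-F)(Y_i\wedge Y_j)$, so that $Q_n-\tilde Q_n=A_n-B_n+o_{\P}(n^{-1/2})$.

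Thus it suffices to prove $\sqrt n(A_n-B_n)\overset{\P}{\rightarrow}0$, that is, that evaluating the centered empirical process at the specific neighbour $N(i)$ is interchangeable, to first order, with averaging over all $j\ne i$. Writing $\sqrt nA_n$ and $\sqrt nB_n$ as V- and U-statistics in the $U_m$ conditionally on $\mathcal{F}_n$, the conceptual core is a H\'ajek-type projection: I would compute the linear projection of each onto $\sum_m h(U_m)$ and verify that they coincide. A short computation shows that in both statistics only the inner index $k$ carried by $F_n-F$ contributes at first order, because $U_i\wedge U_{N(i)}$ is exactly the conditional mean of $\ind(U_k\le U_i\wedge U_{N(i)})$ given $(U_i,U_{N(i)})$, which annihilates the contributions of the outer indices; both projections equal $\frac1{\sqrt n}\sum_m\big[(1-U_m)^2-\tfrac13\big]$. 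Hence the difference of projections vanishes identically, and $\sqrt n(A_n-B_n)$ reduces to a difference of two completely degenerate remainders.

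It then remains to show that each degenerate remainder is $o_{\P}(1)$. The one coming from $B_n$ is a standard completely degenerate U-statistic term and is $O_{\P}(n^{-1/2})$. The remainder from $\sqrt nA_n$ is the genuine obstacle and I expect it to be the hardest step: its summands $W_{i,k}:=\ind(U_k\le U_i\wedge U_{N(i)})-U_i\wedge U_{N(i)}$ are coupled through the graph $N(\cdot)$, so U-statistic theory does not apply directly and I must bound $\mathrm{Var}(\sqrt nA_n\mid\mathcal{F}_n)$ by a direct covariance count. The key point is that $\mathrm{Cov}(W_{i,k},W_{i',k'}\mid\mathcal{F}_n)$ vanishes unless the index sets $\{i,N(i),k\}$ and $\{i',N(i'),k'\}$ overlap at an inner index, and the number of surviving quadruples is kept at $o(n^4)$ by the purely geometric fact that a Euclidean nearest-neighbour graph in $\R^d$ has in-degrees $\#\{i:N(i)=m\}$ bounded by a constant depending only on $d$. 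Combining this count with the projection computation shows that $\mathrm{Var}(\sqrt nA_n\mid\mathcal{F}_n)$ converges to the projection variance, so the remainder has conditional variance tending to $0$; a conditional Chebyshev inequality together with the dominated convergence theorem (exactly as in the proof of Lemma \ref{cons_var0}) then upgrades this to $\sqrt n(A_n-B_n)\overset{\P}{\rightarrow}0$, completing the proof.
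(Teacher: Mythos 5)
Your proposal is correct, and at the structural level it follows the same route as the paper: both arguments split off the centering term by a deterministic rank computation and reduce everything to showing that replacing $F_n$ by $F$ in the graph-based sum and in the pairwise sum is negligible at scale $\sqrt{n}$. Indeed, after your two monotonicity identities, your quantity $A_n - B_n$ is exactly the paper's $Q_n^{(1)} - \tilde Q_n$. The genuine difference lies in how that key step is handled: the paper disposes of it in one line by citing Lemma D.1 of \cite{deb2020kernel}, whereas you prove it from scratch. Your argument---showing that $\sqrt{n}A_n$ and $\sqrt{n}B_n$ have the \emph{same} H\'ajek projection $n^{-1/2}\sum_m \{(1-F(Y_m))^2 - \tfrac{1}{3}\}$, and then killing the degenerate remainders by a conditional covariance count that exploits the bounded in-degree of Euclidean nearest-neighbor graphs (which applies verbatim here, since the rank-based graph is the Euclidean NN graph of the points $\mF_n(\mX_i)$)---is a valid, self-contained alternative; I checked the two algebraic identities, the projection value, and the overlap conditions under which the conditional covariances vanish, and they are all correct. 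What the paper's route buys is brevity; what yours buys is transparency, since it exposes the mechanism (double degeneracy plus bounded graph degree) that the citation hides. Two points you should tighten. First, your count of ``surviving quadruples $= o(n^4)$'' is not by itself sufficient: the conditional variance carries a prefactor $n^{-3}$, so you need the covariance terms that do \emph{not} reproduce the projection to number $o(n^3)$; your in-degree bound in fact gives $O(n^2)$ for those, while the $O(n^3)$ terms with $k = k'$ and disjoint pairs $\{i, N(i)\}$, $\{i', N(i')\}$ are exactly the ones whose sum converges to the projection variance---state this split explicitly. Second, for the conditional Chebyshev step you should note that $\E[\sqrt{n}(A_n - B_n) \mid \mathcal{F}_n]$ is only $O(n^{-1/2})$ rather than zero (the diagonal terms $k \in \{i, N(i)\}$ have nonvanishing conditional mean), which is harmless but needs to be said before Chebyshev is applied.
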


\begin{proof}
Writing $\sqrt{n}Q_n = \sqrt{n}\pa{Q_n^{(1)} + Q_n^{(2)}}$, for
\al{
Q^{(1)}_n &= \frac{1}{n} \sum_{i=1}^n (F_n(Y_i) \wedge F_n(Y_{N(i)})) - \frac{1}{n(n-1)}\sum_{i=1}^n \sum_{j:j\neq i} (F_n(Y_i)\wedge F_n(Y_j))\\
Q^{(2)}_n &= \frac{1}{n(n-1)}\sum_{i=1}^n \sum_{j:j\neq i} (F_n(Y_i)\wedge F_n(Y_j)) - \frac{(n+1)(2n+1)}{6n^2}, 
}
and applying Lemma D.1 of \cite{deb2020kernel} gives that $\sqrt{n} (Q_n^{(1)} - \tilde Q_n) \overset{\P}{\rightarrow} 0$.

Next, to show $\sqrt{n} Q_n^{(2)} \overset{\P}{\rightarrow} 0$, we calculate
\al{
\frac{1}{n(n-1)}\sum_{i=1}^n \sum_{j:j\neq i} (F_n(Y_i)\wedge F_n(Y_j)) &= \frac{1}{n(n-1)}\sum_{i = 1}^n (2n-2i)\cdot \frac{i}{n},
}
which implies the conclusion.
\end{proof}

The following result is a special case of Lemma 7.3 of \cite{shi2024azadkia}.  
\begin{lemma}\label{an_var_equiv}
As $n \rightarrow \infty$, $n\pa{\var{\tilde Q_n\cdl\mathcal{F}_n} - \var{\tilde Q_n}} \rightarrow 0$.
\end{lemma}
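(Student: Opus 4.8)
The plan is to route everything through the law of total variance and thereby reduce the statement to a concentration property of the two nearest-neighbor graph functionals already singled out in this section.

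First I would dispose of the ``between-group'' variance. Under the standing assumptions $\mX\indep Y$ and $F$ continuous, the variables $F(Y_1),\dots,F(Y_n)$ are i.i.d.\ $\mathrm{Uniform}[0,1]$ and independent of $\mathcal F_n$, while the graph $\{N(i)\}$ is $\mathcal F_n$-measurable almost surely (ties having probability zero). Hence, for each $i$, the index $N(i)\neq i$ is fixed given $\mathcal F_n$, so $F(Y_i)$ and $F(Y_{N(i)})$ are distinct i.i.d.\ uniforms with $\ep{F(Y_i)\wedge F(Y_{N(i)})\cdl\mathcal F_n}=\ep{F(Y_i)\wedge F(Y_j)\cdl\mathcal F_n}=\tfrac13$. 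The two averages defining $\tilde Q_n$ therefore share the conditional mean $\tfrac13$ and cancel, so $\ep{\tilde Q_n\cdl\mathcal F_n}=0$ almost surely. By the law of total variance this gives $\var{\tilde Q_n}=\ep{\var{\tilde Q_n\cdl\mathcal F_n}}$, and the claim becomes the statement that $n\var{\tilde Q_n\cdl\mathcal F_n}$ concentrates around its own mean; it suffices to prove $n^2\var{\var{\tilde Q_n\cdl\mathcal F_n}}\to0$.

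Next I would identify the graph-dependent part of the conditional variance. Writing $R_i:=F(Y_i)$ and expanding $\var{\tilde Q_n\cdl\mathcal F_n}$ into conditional covariances of the summands, each covariance vanishes unless the relevant index sets overlap, since the $R_i$ are conditionally independent. One checks that the diagonal, the variance of the U-statistic $\tfrac1{n(n-1)}\sum_{i\neq j}R_i\wedge R_j$, and the cross-covariance between the two averages are all free of the graph---for the cross term because, for fixed $i$, the covariance of $R_i\wedge R_{N(i)}$ summed against all pairs $R_k\wedge R_l$ yields a count independent of which index equals $N(i)$. What survives are exactly the overlap patterns recorded by the mutual-neighbor indicator $T_{i,j}$, the shared-neighbor indicator $C_{i,j,k}$, and a directed path of length two whose leading count $\sum_j\#\{i:N(i)=j\}=n$ is deterministic. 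This yields
\al{
n\var{\tilde Q_n\cdl\mathcal F_n}=\text{const}+c_2\cdot\frac1n\sum_{i,j}T_{i,j}+c_3\cdot\frac1n\sum_{i,j,k}C_{i,j,k}+o_{\P}(1),
}
with $c_2,c_3$ explicit covariances of minima of uniforms; taking expectations and invoking Corollary \ref{exp_avgnn} recovers $\sigma_d^2=\tfrac25+\tfrac25q_d+\tfrac45o_d$.

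It then remains to show $\var{\tfrac1n\sum_{i,j}T_{i,j}}\to0$ and $\var{\tfrac1n\sum_{i,j,k}C_{i,j,k}}\to0$, and this is the step I expect to be the main obstacle. The difficulty is that the edges of $\mathcal G_n$ are defined through the \emph{empirical} CDF $\mF_n$, so its nodes are weakly dependent and the classical stabilization theory for i.i.d.\ points does not apply directly. I would remove this dependence using Lemma \ref{an_N_N_equiv}, which lets me replace $N$ by the population-CDF neighbor $\check N$ with probability tending to one, reducing the problem to the ordinary nearest-neighbor graph of the i.i.d.\ points $\mF(\mX_i)$ in the copula domain. On that graph the in-degree is bounded by a constant $c_d$ depending only on $d$ (a classical geometric fact), so $\sum_{i,j}T_{i,j}$ and $\sum_{i,j,k}C_{i,j,k}$ are sums of $O(n)$ locally determined indicators; combining bounded degree with the local-dependence property of Euclidean nearest-neighbor graphs gives variances of order $o(n^2)$ (indeed $O(n)$), which is exactly what is needed. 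This is precisely the content that specializes from Lemma 7.3 of \cite{shi2024azadkia}, and verifying its hypotheses for $\mathcal G_n$ via Lemmas \ref{an_depequiv}--\ref{an_N_N_equiv} is the crux of the argument.
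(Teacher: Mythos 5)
You should know that the paper itself does not prove this lemma at all: it simply declares it ``a special case of Lemma 7.3 of \cite{shi2024azadkia}.'' Your skeleton is the right one, and it is essentially what that citation silently delegates: since $F(Y_1),\dots,F(Y_n)$ are i.i.d.\ uniform and independent of the graph, $\ep{\tilde Q_n \cdl \mathcal F_n}=0$, so the law of total variance reduces the lemma to concentration of the conditional variance; that conditional variance is an affine function of the two graph counts $\frac1n\sum_{i,j}T_{i,j}$ and $\frac1n\sum_{i,j,k}C_{i,j,k}$ (your observation that the diagonal, the U-statistic block, and the cross block are graph-free is correct); and since both counts are uniformly bounded (bounded in-degree), convergence in probability of the counts upgrades to the $L^2$ convergence you need. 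That said, there are two genuine gaps in the execution.

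First, the claim ``ties having probability zero'' is false in this setting, and it is not cosmetic. The nodes of $\mathcal G_n$ are the rank vectors $\mF_n(\mX_i)$, which live on the lattice $\{1/n,\dots,1\}^d$, so ties among the distances $\nm{\mF_n(\mX_i)-\mF_n(\mX_j)}$ occur with \emph{positive} probability --- this is exactly why the paper introduces the tie-breaking uniforms $U_i$. Consequently the graph is \emph{not} $\mathcal F_n$-measurable, and $\var{\tilde Q_n \cdl \mathcal F_n}$ is not literally your graph-count expression but its conditional expectation given $\mathcal F_n$ (condition first on the $U_i$'s, which are independent of the $Y_i$'s; the conditional mean of $\tilde Q_n$ is still $0$). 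This is repairable --- concentration of $\ep{\frac1n\sum_{i,j}T_{i,j} \cdl \mathcal F_n}$ follows from concentration of $\frac1n\sum_{i,j}T_{i,j}$ itself because a conditional expectation never has larger variance --- but your step 2 identity must be restated. Second, and more seriously, your concentration step is dimension-restricted: Lemma \ref{an_N_N_equiv} is established (and is only true) for $d>2$. For $d=1$ both adjacent order statistics sit at rank distance exactly $1/n$, so the rank-based nearest neighbor is \emph{always} decided by tie-breaking and $\pr{N(i)=\check N(i)}$ stays near $1/2$ for interior points; the coupling to $\check N$ fails outright. Since Lemma \ref{an_var_equiv} is invoked in the proof of Theorem \ref{an} for $d=1$ as well, your argument leaves that case open. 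Once the tie issue is handled correctly, $d=1$ becomes easy: conditional on $\mathcal F_n$, the graph counts are functions of the independent $U_i$'s through the rank-adjacency structure, which is the same for almost every configuration, so $\ep{\frac1n\sum_{i,j}T_{i,j}\cdl\mathcal F_n}$ and $\ep{\frac1n\sum_{i,j,k}C_{i,j,k}\cdl\mathcal F_n}$ are a.s.\ constants (cf.\ Lemma \ref{an_calc_d1}), the conditional variance is a.s.\ equal to the unconditional one, and the claim is trivial there.
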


After algebraic manipulation \citep[for instance]{shi2024azadkia, han2022azadkia}, we may calculate the unconditional limiting variance in Lemma \ref{an_uncondvarcalc}.
\begin{lemma}\label{an_uncondvarcalc}
As $n\rightarrow \infty$, $n\var{\tilde Q_n} \rightarrow \sigma_d^2$, where $\sigma_d^2 = \frac{2}{5} + \frac{2}{5}q_d + \frac{4}{5}o_d$ when $d > 2$, and $\sigma_d^2 = 1$ when $d = 1$.
\end{lemma}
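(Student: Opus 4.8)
The plan is to use that, under the continuity hypotheses, $U_i:=F(Y_i)$ are i.i.d.\ $\mathrm{Uniform}[0,1]$ and---because $\mX\indep Y$---are independent of the rank-based NNG, which is a function of $\mX_1,\dots,\mX_n$ alone. Conditioning on $\mathcal{F}_n$ therefore freezes the graph while leaving $U_1,\dots,U_n$ i.i.d.\ uniform. Both Hoeffding components introduced below are centered, so $\ep{\tilde Q_n\cdl\mathcal{F}_n}=0$; the law of total variance then gives $\var{\tilde Q_n}=\ep{\var{\tilde Q_n\cdl\mathcal{F}_n}}$, and it suffices to evaluate $\lim_n \ep{n\var{\tilde Q_n\cdl\mathcal{F}_n}}$ (Lemma~\ref{an_var_equiv} is what later upgrades this conditional variance into the conditional CLT behind Theorem~\ref{an}).

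Next I would Hoeffding-decompose the kernel $g(u,v):=u\wedge v$ as $g(u,v)=\tfrac13+\bar\phi(u)+\bar\phi(v)+\psi(u,v)$, where $\bar\phi(u)=u-\tfrac{u^2}{2}-\tfrac13$ is the centered linear part and $\psi$ is the doubly-degenerate remainder satisfying $\ep{\psi(u,U)}=0$ for every $u$. Elementary integration yields the two constants that drive everything: $\var{\bar\phi(U)}=\tfrac1{45}$ and $\var{\psi(U,U')}=\var{g}-2\var{\bar\phi(U)}=\tfrac1{18}-\tfrac2{45}=\tfrac1{90}$. Writing $\rho_j:=\sum_{i}\ind(i\to j)$ for the in-degree and substituting the decomposition, the nearest-neighbor sum contributes $\tfrac1n\sum_j(1+\rho_j)\bar\phi(U_j)$ and the $U$-statistic centering contributes $\tfrac2n\sum_j\bar\phi(U_j)$ at the linear level; these merge into a single linear term $L_n=\tfrac1n\sum_j(\rho_j-1)\bar\phi(U_j)$, leaving a degenerate nearest-neighbor term $A_n=\tfrac1n\sum_i\psi(U_i,U_{N(i)})$ and a degenerate $U$-statistic $B_n$.

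The core of the proof is to isolate the two surviving contributions. Hoeffding orthogonality kills every cross-covariance between $L_n$ and the degenerate terms, while $\var{B_n\cdl\mathcal{F}_n}$ and $\cov\pa{A_n,B_n\cdl\mathcal{F}_n}$ are $O(n^{-2})$ and hence vanish after multiplication by $n$. For the linear part, independence of the $U_j$ yields $n\var{L_n\cdl\mathcal{F}_n}=\var{\bar\phi(U)}\cdot\tfrac1n\sum_j(\rho_j-1)^2$, and since $\sum_j\rho_j=n$ one has the identity $\sum_j(\rho_j-1)^2=\sum_j\rho_j(\rho_j-1)=\sum_{i\neq j,\,k}C_{i,j,k}$, the co-pointing statistic of Corollary~\ref{exp_avgnn}, so its normalized expectation tends to $\tfrac1{45}\,o_d$. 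For the degenerate part, $\cov\pa{\psi(U_i,U_{N(i)}),\psi(U_{i'},U_{N(i')})\cdl\mathcal{F}_n}$ is nonzero only when the two unordered index pairs coincide, i.e.\ when $i=i'$ or $i\leftrightarrow i'$; crucially, the co-pointing pattern $N(i)=N(i')$ is annihilated by degeneracy. This gives $n\var{A_n\cdl\mathcal{F}_n}=\var{\psi(U,U')}\,\pa{1+\tfrac1n\sum_{i,j}T_{i,j}}$, converging in expectation to $\tfrac1{90}(1+q_d)$. Collecting the two contributions (and, for $d=1$, substituting the limits $\tfrac12,\tfrac12$ from Lemma~\ref{an_calc_d1} for $o_d,q_d$) produces $\sigma_d^2$ as stated, the remaining numerical factor being supplied by the normalization $P=\int\var{\ind(Y\ge y)}\,\d F(y)$ when one passes from $\tilde Q_n$ to $\xi_n=Q_n/P_n$.

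The main obstacle is twofold. First, making the negligibility of $B_n$ and of the $A_n$--$B_n$ cross term rigorous requires that all the relevant index sums stay $O(n)$, which in turn rests on the uniformly bounded in-degree of a Euclidean nearest-neighbor graph; this is exactly the structural fact that keeps $\tfrac1n\sum_j\rho_j^2$ and $\tfrac1n\sum_{i,j}T_{i,j}$ tight and lets Corollary~\ref{exp_avgnn} apply. Second, and more conceptually, the entire constant hinges on the clean separation revealed by the Hoeffding decomposition---co-pointing triples $C_{i,j,k}$ enter only through the linear component (weighted by $\var{\bar\phi(U)}$) while mutual edges $T_{i,j}$ enter only through the degenerate component (weighted by $\var{\psi(U,U')}$). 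Verifying that no other index pattern survives in either component is the delicate bookkeeping step on which the result turns.
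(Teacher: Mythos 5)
Your proposal is correct, and it essentially supplies the derivation that the paper chooses not to spell out. The paper's own proof of Lemma \ref{an_uncondvarcalc} is a two-line argument: it cites ``the calculations as in'' \cite{han2022azadkia} to assert the identity $n\var{\xi_n} = \frac{2}{5} + \frac{2}{5}\,\ep{\frac{1}{n}\sum_{i,j}T_{i,j}} + \frac{4}{5}\,\ep{\frac{1}{n}\sum_{i,j,k}C_{i,j,k}} + o(1)$, and then plugs in Lemma \ref{an_calc_d1} (for $d=1$) and Corollary \ref{exp_avgnn} (for $d>2$), exactly the two inputs you invoke. Your Hoeffding decomposition of $u\wedge v$ rebuilds that identity from scratch, and the bookkeeping checks out: $\var{\bar\phi(U)}=\frac{1}{45}$ and $\var{\psi(U,U')}=\frac{1}{90}$ are correct; the cross-covariances vanish exactly (not just asymptotically) by degeneracy; the identity $\sum_j(\rho_j-1)^2=\sum_j\rho_j(\rho_j-1)$ correctly converts the linear-part variance into the off-diagonal co-pointing sum, which is indeed the convention the paper uses for $\sum_{i,j,k}C_{i,j,k}$ (its $d=1$ computation in Lemma \ref{an_calc_d1} excludes the diagonal $i=j$, and only that convention yields $\sigma_1^2=1$); and mutual edges $T_{i,j}$ are the only pattern surviving in the degenerate part. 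One point deserves emphasis: your computation gives $n\var{\tilde Q_n}\rightarrow \frac{1}{45}o_d+\frac{1}{90}(1+q_d)=\sigma_d^2/36$, and you correctly attribute the factor $36=1/P^2$ (with $P=\int\var{\ind(Y\geq y)}\,\d F(y)=\frac16$ under continuity) to the passage from $\tilde Q_n$ to $\xi_n=Q_n/P_n$. The paper is actually sloppy here: its lemma is stated for $n\var{\tilde Q_n}$ while its proof computes $n\var{\xi_n}$, silently absorbing the normalization, so the stated $\sigma_d^2$ is literally the limiting variance of $\sqrt{n}\,\xi_n$ rather than of $\sqrt{n}\,\tilde Q_n$; your version makes this explicit, which is cleaner. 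The only step you leave informal---the uniform integrability of $\frac{1}{n}\sum_j(\rho_j-1)^2$ and $\frac{1}{n}\sum_{i,j}T_{i,j}$ needed to pass from conditional variances to their expectations---is, as you note, supplied by the dimension-dependent bound on in-degrees of Euclidean nearest-neighbor graphs, the same structural fact underlying Corollary \ref{exp_avgnn}.
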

\begin{proof}
    By the calculations as in \cite{han2022azadkia}, we find
    \al{
    n \var{\xi_n} &= \frac{2}{5} + 2 \cdot \ep{\frac{1}{n} \sum_{i,j=1}^n T_{i,j}} + \frac{4}{5} \cdot \pa{ \ep{\frac{1}{n}\sum_{i,j,k =1}^n C_{i,j,k}} - \ep{\frac{2}{n} \sum_{i,j=1}^n T_{i,j}}} + o(1)\\
    &= \frac{2}{5} + \frac{2}{5} \cdot \ep{\frac{1}{n} \sum_{i,j=1}^n T_{i,j}} + \frac{4}{5} \cdot \pa{ \ep{\frac{1}{n}\sum_{i,j,k =1}^n C_{i,j,k}} } + o(1).
    }
    Applying Lemma \ref{an_calc_d1} and Corollary \ref{exp_avgnn} to the case of $d=1$ and $d > 2$ separately, we obtain the desired limits.
\end{proof}
From this, we derive a conditional central limit theorem.
\begin{theorem}\label{an_cond}
Conditional on $\mathcal{F}_n$, $\sqrt{n}\tilde Q_n \leadsto \mathcal{N}(0, \sigma_d^2)$ where $\sigma_d^2=1$ when $d=1$, and $ \sigma_d^2 = \frac{2}{5} + \frac{2}{5}q_d + \frac{4}{5}o_d$ when $d > 2$. 
\end{theorem}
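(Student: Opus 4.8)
The plan is to exploit the fact that, conditionally on $\mathcal{F}_n$, the problem becomes a central limit theorem for a sum of i.i.d.\ uniforms attached to a \emph{fixed} graph. Under the continuity assumptions of Theorem \ref{an}, the marginal ranks are almost surely distinct, so the directed graph $\mathcal{G}_n$ is a deterministic function of $\mathcal{F}_n$; moreover, since $\mX \indep Y$ and $F$ is continuous, conditional on $\mathcal{F}_n$ the variables $U_i := F(Y_i)$ are i.i.d.\ $\mathrm{Uniform}[0,1]$. Thus the whole task is to show that the $\mathcal{F}_n$-measurable Kolmogorov distance between the conditional law of $\sqrt n \tilde Q_n$ and $\mathcal N(0,\sigma_d^2)$ tends to $0$ in probability.

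The first step is to reduce $\sqrt n \tilde Q_n$ to a sum of \emph{local} summands. Writing $g(u,v):=u\wedge v$ and $\psi(u):=\ep{g(u,U)}=u-u^2/2$ for $U\sim\mathrm{Uniform}[0,1]$, I would replace the U-statistic in $\tilde Q_n$ by its Hájek projection $\frac{2}{n}\sum_i(\psi(U_i)-\frac13)$; the degenerate remainder of an order-two U-statistic is $O_{\P}(1/n)$, hence negligible after multiplication by $\sqrt n$. Setting
\[
S_i := g(U_i, U_{N(i)}) - 2\psi(U_i) + \tfrac13,
\]
one checks directly that $\ep{S_i\cdl\mathcal{F}_n}=0$ (using $\ep{g}=\ep{\psi(U)}=\frac13$) and that each $S_i$ depends only on $(U_i,U_{N(i)})$, so that
\[
\sqrt n\, \tilde Q_n = \frac{1}{\sqrt n}\sum_{i=1}^n S_i + o_{\P}(1).
\]
The point of this rewriting is that the globally coupled U-statistic has been absorbed into a per-vertex functional, leaving a sum whose dependency structure is governed entirely by $\mathcal{G}_n$.

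The key structural fact is then that this dependency graph has bounded maximum degree: $S_i$ and $S_j$ can be dependent only when $\{i,N(i)\}\cap\{j,N(j)\}\neq\varnothing$, each vertex has out-degree one, and by the classical geometric property of Euclidean nearest-neighbor graphs the in-degree of every vertex is at most a constant $c_d$ depending only on $d$. Because the $S_i$ are uniformly bounded and mean-zero conditional on $\mathcal{F}_n$, I would invoke a Stein's-method normal-approximation bound for sums over bounded-degree dependency graphs (of Chen--Shao / Rinott type), applied conditionally on $\mathcal{F}_n$, to control the conditional distance to $\mathcal N(0,v_n)$ with $v_n:=\var{\frac{1}{\sqrt n}\sum_i S_i\cdl\mathcal{F}_n}$. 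With the degree bounded surely and the summands bounded, this bound is $\mathcal{F}_n$-measurable and of order $O(n^{-1/2})/v_n^{3/2}$, so it vanishes as long as $v_n$ stays bounded away from $0$. Finally, $v_n = n\var{\tilde Q_n\cdl\mathcal{F}_n}+o_{\P}(1)$, which by Lemmas \ref{an_var_equiv} and \ref{an_uncondvarcalc} converges in probability to $\sigma_d^2>0$; this both validates the normalization and identifies the limit, giving the stated conditional weak convergence.

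The main obstacle I anticipate is the bookkeeping around the conditional variance: I must verify that $v_n$ is bounded away from $0$ in probability \emph{along the random sequence of graphs} (rather than merely on average), so that the Stein bound genuinely vanishes, and I must confirm that the $o_{\P}(1)$ remainder from the Hájek reduction does not perturb the variance. The positivity is inherited from $\sigma_d^2 \geq \frac{2}{5}$ together with the variance equivalence of Lemma \ref{an_var_equiv}, but making the Stein-bound estimate uniform enough to conclude convergence in probability—while correctly accounting for the degenerate cross-terms that produce the $q_d$ and $o_d$ contributions through $T_{i,j}$ and $C_{i,j,k}$—is the delicate part of the argument.
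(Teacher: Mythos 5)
Your proposal is correct and follows essentially the same route as the paper's proof: a H\'ajek projection to absorb the U-statistic into per-vertex summands, a dependency graph whose edges are determined by shared nearest-neighbor relations, a Chen--Shao type normal-approximation bound applied conditionally on $\mathcal{F}_n$, and identification of the limiting variance via Lemmas \ref{an_var_equiv} and \ref{an_uncondvarcalc}. Your explicit centering constant ($+\tfrac13$ in $S_i$, using $\psi(u)=u-u^2/2$) and the bounded in-degree observation are just more carefully spelled-out versions of what the paper does implicitly, so there is no substantive difference.
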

\begin{proof}
Using a Hájek projection and the fact that $Y_i$ is independent of $\mX_i$,
$$
\frac{1}{\sqrt{n}(n-1)}\sum_{i=1}^n \sum_{j:j\neq i} (F(Y_i)\wedge F(Y_j))  = \frac{2}{\sqrt{n}}\sum_{i=1}^n h(Y_i) + o_{\P}(1)
$$
for $h(Y_i) := \ep{F(Y_i) \wedge U|Y_i} - \ep{F(Y_1) \wedge F(Y_2)}$ and $U \sim {\rm Uniform}[0,1]$ independently drawn. As a result,
$$
\sqrt{n}\tilde Q_n = \frac{1}{\sqrt{n}} \sum_{i=1}^n F(Y_i) \wedge F(Y_{N(i)})- \frac{2}{\sqrt{n}}\sum_{i=1}^n h(Y_i) + o_{\P}(1),
$$
so it suffices to show the right-hand side is asymptotically normal. 

To do so, we construct a dependency graph $\mathcal{G}_n$ consisting of nodes $\{V_i\}_{1 \leq i \leq n}$ where $V_i = (F(Y_i) \wedge F(Y_{N(i)})) - 2h(Y_i)$, and an edge between $V_i$ and $V_j$ if and only if $i = N(j)$ or $j = N(i)$ or $N(i) = N(j)$. 

Since $\ep{V_i | \mathcal{F}_n} = 0$, we apply a Berry-Esseen bound given in \cite{chen2004normal}, for any $\epsilon > 0$ to get
$$
\sup_{z }\left|\mathbb{P}\pa{\frac{\sum_{i=1}^n V_i}{\sqrt{\var{\sum_{i=1}^n V_i|\mathcal{F}_n }}}\leq t\cdl\mathcal{F}_n} - \Phi(z)\right| \leq 75C_{p+q}^{5(1 + \epsilon)}\frac{\ep{\sum_{i=1}^n |V_i|^{2 + \epsilon}|\mathcal{F}_n}}{\var{\sum_{i=1}^n V_i | \mathcal{F}_n}^{(2+\epsilon)/2}}.
$$

The numerator satisfies
\al{
n^{\epsilon/2}\sum_{i=1}^n \ep{|V_i|^{2+\epsilon}| \mathcal{F}_n} &= n^{\epsilon/2}n^{-1-\epsilon/2} \sum_{i=1}^n \ep{\pa{F(Y_i) \wedge F(Y_{N(i)})) -   2h(Y_i)}^{2+\epsilon}\cdl \mathcal{F}_n}\\
&\leq  n^{-1} \sum_{i=1}^n \ep{\pa{F(Y_i) \wedge F(Y_{N(i)})) -   2h(Y_i)}^{2+\epsilon}\cdl \mathcal{F}_n},
}
so $\sum_{i=1}^n \ep{|V_i|^{2+\epsilon}| \mathcal{F}_n}$ tends to $0$.
By the previous lemma, $\var{n^{-1}\sum_{i=1}^n V_i | \mathcal{F}_n}^{(2+\epsilon)/2}$ converges to the same limit as $\var{\tilde Q_n}$.
\end{proof}

Finally, we obtain Theorem \ref{an} by combining our previous results.

\begin{proof}[Proof of Theorem 2]
    Applying Lemma \ref{an_var_equiv}, $\frac{\var{\tilde Q_n \cdl \mathcal{F}_n}}{\var{\tilde Q_n}} \rightarrow 1.$ Therefore, it suffices to show the limiting distribution of $\frac{\sqrt{n}\tilde Q_n}{\sqrt{\var{\tilde Q_n}}}$ has a standard Gaussian limit. 
    
    Fix any $t \in \R$. Expanding, and applying Theorem \ref{an_cond} and the dominated convergence theorem yields
    $$
    \lim_{n\rightarrow \infty} \ep{\pr{\frac{\sqrt{n} \tilde Q_n}{\sqrt{\var{\tilde Q_n}}} \leq t \cdl \mathcal{F}_n} - \Phi(t)} = 0.
    $$
    Therefore, 
    $$
    \sqrt{n}\tilde Q_n \leadsto \mathcal{N}(0,\sigma_d^2),
    $$
    where $\sigma_d^2$ is given in Theorem \ref{an_cond}, concluding the proof.
    
\end{proof}

{
\bibliographystyle{apalike}
\bibliography{AMS}

\begin{thebibliography}{}

\bibitem[Ansari and Fuchs, 2022]{ansari2022simple}
Ansari, J. and Fuchs, S. (2022).
\newblock A simple extension of {A}zadkia and {C}hatterjee's rank correlation
  to a vector of endogenous variables.
\newblock {\em arXiv preprint arXiv:2212.01621}.

\bibitem[Auddy et~al., 2024]{auddy2021exact}
Auddy, A., Deb, N., and Nandy, S. (2024).
\newblock Exact detection thresholds for {C}hatterjee's correlation.
\newblock {\em Bernoulli}, 30(2):1640--1668.

\bibitem[Azadkia and Chatterjee, 2021]{azadkia2019simple}
Azadkia, M. and Chatterjee, S. (2021).
\newblock A simple measure of conditional dependence.
\newblock {\em The Annals of Statistics}, 49(6):3070--3102.

\bibitem[Azadkia et~al., 2021]{azadkia2021fast}
Azadkia, M., Taeb, A., and B{\"u}hlmann, P. (2021).
\newblock A fast non-parametric approach for causal structure learning in
  polytrees.
\newblock {\em arXiv preprint arXiv:2111.14969}.

\bibitem[Bickel, 2022]{bickel2022measures}
Bickel, P.~J. (2022).
\newblock Measures of independence and functional dependence.
\newblock {\em arXiv preprint arXiv:2206.13663}.

\bibitem[B{\"u}cher and Dette, 2024]{bucher2024lack}
B{\"u}cher, A. and Dette, H. (2024).
\newblock On the lack of weak continuity of {C}hatterjee's correlation
  coefficient.
\newblock {\em arXiv preprint arXiv:2410.11418}.

\bibitem[Cao and Bickel, 2020]{cao2020correlations}
Cao, S. and Bickel, P.~J. (2020).
\newblock Correlations with tailored extremal properties.
\newblock {\em arXiv preprint arXiv:2008.10177}.

\bibitem[Cattaneo et~al., 2025]{cattaneo2023rosenbaum}
Cattaneo, M.~D., Han, F., and Lin, Z. (2025).
\newblock On {R}osenbaum's rank-based matching estimator.
\newblock {\em Biometrika (in press)}.

\bibitem[Chatterjee, 2021]{chatterjee2021new}
Chatterjee, S. (2021).
\newblock A new coefficient of correlation.
\newblock {\em Journal of the American Statistical Association},
  116(536):2009--2022.

\bibitem[Chatterjee, 2023]{chatterjee2022survey}
Chatterjee, S. (2023).
\newblock A survey of some recent developments in measures of association.
\newblock {\em Probability and Stochastic Processes - A Volume in Honour of
  Rajeeva L. Karandikar}.

\bibitem[Chen and Shao, 2004]{chen2004normal}
Chen, L.~H. and Shao, Q.-M. (2004).
\newblock Normal approximation under local dependence.
\newblock {\em The Annals of Probability}, 32(3A):1985--2028.

\bibitem[Deb et~al., 2020]{deb2020kernel}
Deb, N., Ghosal, P., and Sen, B. (2020).
\newblock Measuring association on topological spaces using kernels and
  geometric graphs.
\newblock {\em arXiv preprint arXiv:2010.01768}.

\bibitem[Dette and Kroll, 2024]{dette2024simple}
Dette, H. and Kroll, M. (2024).
\newblock A simple bootstrap for {C}hatterjee's rank correlation.
\newblock {\em Biometrika (in press)}, page asae045.

\bibitem[Dette et~al., 2013]{dette2013copula}
Dette, H., Siburg, K.~F., and Stoimenov, P.~A. (2013).
\newblock A copula-based non-parametric measure of regression dependence.
\newblock {\em Scandinavian Journal of Statistics}, 40(1):21--41.

\bibitem[Dvoretzky et~al., 1956]{dvoretzky1956asymptotic}
Dvoretzky, A., Kiefer, J., and Wolfowitz, J. (1956).
\newblock Asymptotic minimax character of the sample distribution function and
  of the classical multinomial estimator.
\newblock {\em The Annals of Mathematical Statistics}, 27(3):642--669.

\bibitem[Fuchs, 2024]{fuchs2021bivariate}
Fuchs, S. (2024).
\newblock Quantifying directed dependence via dimension reduction.
\newblock {\em Journal of Multivariate Analysis (in press)}, 201:105266.

\bibitem[Gamboa et~al., 2022]{gamboa2020global}
Gamboa, F., Gremaud, P., Klein, T., and Lagnoux, A. (2022).
\newblock Global sensitivity analysis: a new generation of mighty estimators
  based on rank statistics.
\newblock {\em Bernoulli}, 28(4):2345--2374.

\bibitem[Griessenberger et~al., 2022]{griessenberger2022multivariate}
Griessenberger, F., Junker, R.~R., and Trutschnig, W. (2022).
\newblock On a multivariate copula-based dependence measure and its estimation.
\newblock {\em Electronic Journal of Statistics}, 16(1):2206--2251.

\bibitem[Han and Huang, 2024]{han2022azadkia}
Han, F. and Huang, Z. (2024+).
\newblock Azadkia-{C}hatterjee's correlation coefficient adapts to manifold
  data.
\newblock {\em The Annals of Applied Probability (in press)}.

\bibitem[Henze, 1987]{henze1987fraction}
Henze, N. (1987).
\newblock On the fraction of random points by specified nearest-neighbour
  interrelations and degree of attraction.
\newblock {\em Advances in Applied Probability}, 19(4):873--895.

\bibitem[Hodges and Lehmann, 1956]{MR79383}
Hodges, Jr., J.~L. and Lehmann, E.~L. (1956).
\newblock The efficiency of some nonparametric competitors of the {$t$}-test.
\newblock {\em The Annals of Mathematical Statistics}, 27(2):324--335.

\bibitem[Huang et~al., 2022]{huang2020kernel}
Huang, Z., Deb, N., and Sen, B. (2022).
\newblock Kernel partial correlation coefficient---a measure of conditional
  dependence.
\newblock {\em The Journal of Machine Learning Research}, 23(1):9699--9756.

\bibitem[Kroll, 2024]{kroll2024asymptotic}
Kroll, M. (2024).
\newblock Asymptotic normality of {C}hatterjee's rank correlation.
\newblock {\em arXiv preprint arXiv:2408.11547}.

\bibitem[Lin and Han, 2022]{lin2022limit}
Lin, Z. and Han, F. (2022).
\newblock Limit theorems of {C}hatterjee's rank correlation.
\newblock {\em arXiv preprint arXiv:2204.08031}.

\bibitem[Lin and Han, 2023]{lin2021boosting}
Lin, Z. and Han, F. (2023).
\newblock On boosting the power of {C}hatterjee's rank correlation.
\newblock {\em Biometrika}, 110(2):283--299.

\bibitem[Lin and Han, 2025]{lin2024failure}
Lin, Z. and Han, F. (2025).
\newblock On the failure of the bootstrap for {C}hatterjee's rank correlation.
\newblock {\em Biometrika (in press)}.

\bibitem[R{\'e}nyi, 1959]{renyi1959measures}
R{\'e}nyi, A. (1959).
\newblock On measures of dependence.
\newblock {\em Acta Mathematica Hungarica}, 10(3-4):441--451.

\bibitem[Rosenbaum, 2005]{rosenbaum2005exact}
Rosenbaum, P.~R. (2005).
\newblock An exact distribution-free test comparing two multivariate
  distributions based on adjacency.
\newblock {\em Journal of the Royal Statistical Society Series B: Statistical
  Methodology}, 67(4):515--530.

\bibitem[Rosenbaum, 2010]{rosenbaum2010design}
Rosenbaum, P.~R. (2010).
\newblock {\em Design of Observational Studies}.
\newblock Springer.

\bibitem[Schweizer and Wolff, 1981]{schweizer1981nonparametric}
Schweizer, B. and Wolff, E.~F. (1981).
\newblock On nonparametric measures of dependence for random variables.
\newblock {\em The Annals of Statistics}, 9(4):879--885.

\bibitem[Shi et~al., 2022]{shi2020power}
Shi, H., Drton, M., and Han, F. (2022).
\newblock {On the power of Chatterjee's rank correlation}.
\newblock {\em Biometrika}, 109(2):317--333.

\bibitem[Shi et~al., 2024]{shi2024azadkia}
Shi, H., Drton, M., and Han, F. (2024).
\newblock On {A}zadkia--{C}hatterjee's conditional dependence coefficient.
\newblock {\em Bernoulli}, 30(2):851--877.

\bibitem[Strothmann et~al., 2024]{strothmann2024rearranged}
Strothmann, C., Dette, H., and Siburg, K.~F. (2024).
\newblock Rearranged dependence measures.
\newblock {\em Bernoulli}, 30(2):1055--1078.

\bibitem[Zhang, 2023a]{zhang2023relationships}
Zhang, Q. (2023a).
\newblock On relationships between {C}hatterjee's and {S}pearman's correlation
  coefficients.
\newblock {\em arXiv preprint arXiv:2302.10131}.

\bibitem[Zhang, 2023b]{zhang2022asymptotic}
Zhang, Q. (2023b).
\newblock On the asymptotic null distribution of the symmetrized {C}hatterjee's
  correlation coefficient.
\newblock {\em Statistics and Probability Letters}, 194:109759.

\end{thebibliography}
}
\end{document}